\newcommand{\C}{\mathbb{C}}
\newcommand{\Z}{\mathbb{Z}}
\newcommand{\GLn}[2][\C]{\mathrm{GL}(#2,#1)}
\newcommand{\SO}[1]{\mathrm{SO}(#1)}
\newcommand{\U}[1][2]{\mathrm{U}(#1)}
\newcommand{\SU}{\mathrm{SU}(2)}
\newcommand{\BG}[1][\Gamma]{\mathrm{B}#1}
\newcommand{\EG}[1][\Gamma]{\mathrm{E}#1}
\newcommand{\abs}[1]{\lvert #1\rvert}
\newcommand{\Ss}[1]{\mathcal{O}_{#1}}
\newcommand{\Rs}{\tilde{X}}
\newcommand{\smvee}{\raise0.9ex\hbox{$\scriptscriptstyle\vee$}}
\DeclareMathOperator{\Homs}{\mathscr{H}\text{\kern -3pt {\calligra\large om}}\,}
\DeclareMathOperator{\Ab}{Ab}
\DeclareMathOperator{\Hom}{Hom}
\DeclareMathOperator{\sign}{sign}
\DeclareMathOperator{\rank}{rank}
\newtheorem{Theorem}{Theorem}
\newtheorem{theorem}{Theorem}[section]
\newtheorem{lemma}[theorem]{Lemma}
\newtheorem{corollary}[theorem]{Corollary}
\newtheorem{proposition}[theorem]{Proposition}
\newtheorem{conjecture}[theorem]{Conjecture}
\theoremstyle{definition}
\newtheorem{definition}[theorem]{Definition}
\newtheorem{example}[theorem]{Example}
\newtheorem{remark}[theorem]{Remark}
\numberwithin{equation}{section}
\begin{document}

\title[Classification of indecomposable reflexive modules on $(\mathbb{C}^2/\Gamma,0)$]{Classification of indecomposable reflexive modules on quotient singularities through Atiyah--Patodi--Singer theory}

\author[J.~A.~Arciniega Nev\'arez]{Jos\'e Antonio Arciniega Nev\'arez}
\address{Divisi\'on de Ingenier\'ias, Campus Guanajuato, Universidad de Guanajuato,
Av. Ju\'arez No. 77, Zona Centro, Guanajuato, Gto., Mexico, C.P. 36000}
\curraddr{}
\email{ja.arciniega@ugto.mx}
\thanks{}

\author[J.~L.~Cisneros-Molina]{Jos\'e Luis Cisneros-Molina}
\address{Instituto de Matem\'aticas, Unidad Cuernavaca, Universidad Nacional Aut\'onoma
de M\'exico, Avenida Universidad s/n, Colonia Lomas de Chamilpa, Cuernavaca,
Morelos, Mexico}
\curraddr{}
\email{jlcisneros@im.unam.mx}
\thanks{}

\author[A.~Romano Vel\'azquez]{Agust\'in Romano Vel\'azquez}
\address{Instituto de Matem\'aticas, Unidad Cuernavaca, Universidad Nacional Aut\'onoma
de M\'exico, Avenida Universidad s/n, Colonia Lomas de Chamilpa, Cuernavaca,
Morelos, Mexico}
\curraddr{}
\email{agustin.romano@im.unam.mx}
\thanks{}

\subjclass[2010]{Primary }

\keywords{}

\date{}

\dedicatory{}

\begin{abstract}
In \cite{Esnault:RMQSS} Esnault asked whether on a general quotient surface singularity the rank and the first Chern class distinguish isomorphism classes of indecomposable reflexive modules.
Wunram gave a contraexample in \cite{Wunram:RMOQSS} showing two different full shaves on a quotient singularity, with the same rank and the same first Chern class.
In this article, we prove that irreducible reflexive modules over quotient surface singularities are determined by the rank, first Chern class and the Atiyah-Patodi-Singer $\tilde{\xi}$-invariant
\cite{Atiyah-Patodi-Singer:SARGIII},
except for the case of rank $2$ indecomposable reflexive modules over dihedral quotient surface singularities $\mathbb{D}_{n,q}$ with $\gcd(m,2)=2$, which we conjecture to follow the same pattern.
To prove the classification theorem, first we prove that every spherical $3$-manifold with non-trivial fundamental group appears as the link of a quotient surface singularity, and
that indecomposable flat vector bundles over spherical $3$-manifolds are classified by their rank, first and second Cheeger-Chern-Simons classes, with the exception of the aforementioned case.
\end{abstract}

\maketitle

\section{Introduction}

McKay correspondence was presented by John McKay in \cite{McKay:GSFG}, it establishes a bijection between the non-trivial irreducible representations of a finite subgroup $\Gamma$ of $\mathrm{SL}(2,\mathbb{C})$ and the irreducible components of the exceptional divisor in the minimal resolution of the associated Kleinian singularity, $X = \mathbb{C}^2 / \Gamma$.

Gonzalez-Sprinberg and Verdier~\cite{Gonzalez-Sprinberg-Verdier:McKay} provided a geometric construction, case by case, of this correspondence as follows:
using the Riemann-Hilbert correspondence, to every non-trivial irreducible representation $\rho$ of $\Gamma$ they associated a non-trivial indecomposable reflexive $\mathcal{O}_{X}$-module $M$,
the pull-back $\pi^*M/\mathrm{tor}$ of $M$ on the minimal resolution $\pi\colon\tilde{X}\to X$, called the \textit{full sheaf associated to $M$}, is locally free and they proved that its first Chern class $c_1(\pi^*M/\mathrm{tor})$ is the Poincar\'e dual of a curvette intersecting exactly one irreducible component of the exceptional divisor, and that it determines the module $M$ and the representation $\rho$.
Knörrer \cite{Knorrer:GRRRDP} later gave a group-theoretical interpretation of this geometrical construction, and Artin and Verdier~\cite{Artin-Verdier:RMORDP} provided an elegant theoretical proof of the fact that the first Chern class determines the module $M$.
However, Knörrer and Verdier raised the question of whether an indecomposable reflexive module on a general quotient surface singularity is similarly determined by its first Chern class.
In \cite{Esnault:RMQSS} Esnault gave a negative answer to this question, giving an example of a quotient singularity for which there are an
invertible full sheaf and a rank $2$ indecomposable full sheaf having the same first Chern class; and
asked whether the Chern polynomial (that is, the first Chern class and the rank) distinguishes isomorphism classes of indecomposable reflexive modules.
Wunram \cite{Wunram:RMOQSS} answered this question negatively constructing two different full sheaves on a quotient singularity, with the same rank and the same first Chern class, leaving the classification of indecomposable reflexive modules on quotient singularities unresolved. The main difficulty is that the classification problem of full sheaves requires an invariant that determines the holomorphic structure. In Section \ref{sec.Classification} we prove that it is impossible to classify full sheaves only with topological invariants. It is important to remark that even though there is no moduli for line bundles in quotient singularities, we have found moduli for full sheaves of rank higher than one. This is the main difficult part of this problem.

In this article, we give an (almost) complete classification of indecomposable reflexive modules on quotient singularities, resolving a problem that has remained open for 37 years.

Quotient surface singularities $(X,x)$ are isomorphic to germs $(\mathbb{C}^2/\Gamma,0)$, where $\Gamma$ is a small finite subgroup of $\mathrm{GL}(2,\mathbb{C})$, that is, its elements does not have $1$ as an eigenvalue. If $\Gamma$ and $\Gamma'$ are two small finite subgroups of $\mathrm{GL}(2,\mathbb{C})$, then $\mathbb{C}^2/\Gamma$ and $\mathbb{C}^2/\Gamma'$ are analytically isomorphic if $\Gamma$ and $\Gamma'$ are conjugate. Hence, quotient surface singularities are classified by conjugacy classes of small finite subgroups of $\mathrm{GL}(2,\mathbb{C})$.
The list of such groups was given by Brieskorn \cite[Satz~2.9]{Brieskorn:RSKF} with the notation used in \cite[p.~57]{DuVal:HQR} which is difficult to work with.
Riemenschneider \cite[p.~38]{Riemenschneide:IEUGL2C} listed the matrices which generate these groups, but he does not give relations, so one does not have presentations for the small finite subgroups of $\GLn{2}$.

Every finite subgroup of $\GLn{2}$ is conjugate to a finite subgroup of $\U$, thus, these subgroups fix the $3$-dimensional spheres in $\C^2$ centered at the origin.
This implies that the link $L$ of the quotient singularity $\mathbb{C}^2/\Gamma$ is diffeomorphic to the spherical $3$-manifold $\mathbb{S}^3/\Gamma$.
Spherical $3$-manifolds are compact oriented $3$-manifolds with finite fundamental group, they are of the form $\mathbb{S}^3/G$ where $G$ is a finite subgroup of $\SO{4}$
acting freely and isometrically on $\mathbb{S}^3$. By Hopf \cite{Hopf:ZCKR} the list of such groups is known and they are given by presentations.
Since the links of quotient singularities are spherical $3$-manifolds, in order to have presentations for the small finite subgroups of $\U$, one needs to see to which groups
they correspond in the list of finite subgroups of $\SO{4}$ which act freely and isometrically on $\mathbb{S}^3$. It turns out that basically the two list coincide, so in Section~\ref{sec:groups.lists} we prove the following result.

\begin{Theorem}\label{Thm1}
Every spherical $3$-manifold with non-trivial fundamental group appears as the link of a quotient surface singularity.
\end{Theorem}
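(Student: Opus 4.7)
\medskip

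\noindent\textbf{Proof proposal.}
The plan is to prove the theorem by matching, case by case, Hopf's list of finite subgroups of $\SO{4}$ acting freely and isometrically on $\mathbb{S}^3$ against the Brieskorn--Riemenschneider list of small finite subgroups of $\U$. Since the link of $\C^2/\Gamma$ with $\Gamma\subset\U$ small is diffeomorphic to $\mathbb{S}^3/\Gamma$, it suffices to show that every finite subgroup $G\subset\SO{4}$ acting freely on $\mathbb{S}^3$ is conjugate in $\mathrm{Diff}(\mathbb{S}^3)$ (in fact, in $\mathrm{O}(4)$) to a small finite subgroup of $\U\subset\SO{4}$.

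The first step is to recall that a finite subgroup $\Gamma\subset\U$ acts freely on $\mathbb{S}^3\subset\C^2$ if and only if no non-identity element of $\Gamma$ fixes a non-zero vector, which is precisely the smallness condition. Hence the small finite subgroups of $\U$ produce, via the inclusion $\U\hookrightarrow\SO{4}$, a distinguished family of spherical $3$-manifolds; we need to identify this family with all of Hopf's list. I would open with a dictionary table: for each entry in Hopf's classification (cyclic groups $\Z/n$, binary dihedral groups $\langle \mathbb{D}^*_{4m}\rangle$, binary tetrahedral, binary octahedral, binary icosahedral groups, and the mixed ``type~I'' and ``type~II'' products such as $\Z/m\times_d \mathbb{D}^*$ etc.) I would display an explicit pair of generating matrices in $\U$ realising that abstract group, written in the same $2\times 2$ form used by Riemenschneider. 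Verifying that each such pair generates a small subgroup reduces to checking that the eigenvalues of every non-identity element are different from $1$, which for the diagonal generator is immediate and for the off-diagonal generator follows from a short trace computation.

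Next I would verify that the matrices I exhibit really generate the correct abstract group, by checking the defining relations used by Hopf in each case; this step is bookkeeping but is where the notational mismatch between Brieskorn's Du~Val-style notation, Riemenschneider's matrix generators, and Hopf's presentations has to be reconciled. For the ``mixed'' product-type groups (the families built by glueing a cyclic factor to a binary polyhedral factor through a common central quotient) one has to be slightly careful: I would produce the generators as block-diagonal or block-anti-diagonal unitary matrices obtained by lifting the index-two or higher extensions explicitly, and then check smallness on the coset representatives.

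Finally I would argue completeness: after the explicit matching, every item of Hopf's list is realised by at least one small subgroup of $\U$, so the corresponding spherical $3$-manifold appears as a link. Conversely, since the small condition forces free action and the two classifications have been shown to have the same entries, no item is missed. The main obstacle, as already indicated, is not conceptual but organisational: translating between the three coexisting notational conventions without losing or duplicating any family, and in particular correctly identifying the mixed-type groups, whose parameters $(n,q)$ in Riemenschneider's list must be matched with Hopf's indexing of the corresponding extensions. Once this dictionary is built, the theorem follows immediately by inspection.
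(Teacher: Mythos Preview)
Your proposal is correct and follows essentially the same strategy as the paper: both argue by matching, family by family, Hopf's list of finite fixed-point-free subgroups of $\SO{4}$ (Theorem~\ref{thm:fsgfi}) against the Brieskorn--Riemenschneider list of small finite subgroups of $\U$ (Theorem~\ref{thm:fssu2}), with the cyclic and binary-polyhedral-times-cyclic cases falling out directly and the remaining ``mixed'' families requiring an explicit isomorphism. The paper carries out precisely the step you flag as delicate---your ``mixed product-type groups''---in Theorem~\ref{thm:D.T}, where the two exceptional Riemenschneider families $\mathbb{D}_{n,q}$ with $m$ even and $\mathbb{T}_m$ with $\gcd(m,6)=3$ are identified, via explicit matrix computations with the generators $\psi_{2q},\tau,\phi_{4m},\eta$, with the Hopf families $D_{2^{k+1}(2r+1)}\times\mathsf{C}_l$ and $P'_{8\cdot3^k}\times\mathsf{C}_l$; this is exactly the dictionary you propose to build, executed in the Riemenschneider-to-Hopf direction rather than the reverse.
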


The finite subgroups of $\SO{4}$ acting freely and isometrically on $\mathbb{S}^3$ consist of the trivial group, the binary polyhedral groups, two families of groups denoted by $D_{2^{k+1}(2r+1)}$ and $P'_{8\cdot 3^k}$, and the direct product of any of them with a cyclic group of relatively prime order. The irreducible representations of cyclic and binary polyhedral groups are known, in Section~\ref{sec:irr.rep} we give the irreducible representations of the groups $D_{2^{k+1}(2r+1)}$ and $P'_{8\cdot 3^k}$ which, as far as we know, have not previously been detailed in the literature.

Our next goal is to classify flat vector bundles over spherical $3$-manifolds using  Cheeger-Chern-Simons classes which are secondary characteristic classes.
Isomorphism classes of rank $n$ flat vector bundles over a compact smooth manifold are in one-to-one correspondence with $n$-dimensional representations of the fundamental group of the manifold.
In Section~\ref{sec:CCS} we recall some results on flat vector bundles, Chern and Cheeger-Chern-Simons classes and CCS-numbers of representions and their properties. In particular,
our result \cite[Theorems~1 \& 2]{arciniegaetal:CCSC} that for a rational homology $3$-sphere $L$ (e.g. a spherical $3$-manifold) and a representation $\rho\colon\pi_1(L)\to\GLn{\C}$ of its fundamental group, the second CCS-number of $\rho$, that is, the second Cheeger-Chern-Simons class $\widehat{c}_{\rho,2}$ evaluated on the fundamental class $[L]$, is given in terms of the Atiyah-Patodi-Singer $\tilde{\xi}$-invariant of the Dirac operator $D$ on $L$, twisted by the representation $\rho$ and its determinant $\det(\rho)$, namely, $\widehat{c}_{\rho,2}([L])=\tilde\xi_{\rho}(D)-\tilde\xi_{\det(\rho)}(D)$. 
Using the computations in \cite{arciniegaetal:CCSC} of the first and second CCS-numbers of representations of binary polyhedral groups we prove
the following classification result in Section~\ref{sec:Class.FVB}:

\begin{Theorem}\label{Thm2}
Let $M$ be a spherical $3$-manifold. Indecomposable flat vector bundles over $M$ are classified by their rank, first and second Cheeger-Chern-Simons classes.
\end{Theorem}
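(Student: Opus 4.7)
The plan is to reduce the theorem to a statement about irreducible complex representations of $\pi_1(M)$ and then verify it case by case, using the explicit description of these representations together with the CCS/$\tilde\xi$-invariant formula recalled from \cite{arciniegaetal:CCSC}. Since $M$ is a spherical $3$-manifold, $G=\pi_1(M)$ is finite, so by Maschke's theorem every complex representation of $G$ is completely reducible. Under the standard equivalence between flat bundles and representations, indecomposable flat bundles over $M$ therefore correspond bijectively to isomorphism classes of irreducible complex representations of $G$. It thus suffices to show that the triple $\bigl(\rank\rho,\widehat c_{\rho,1}([M]),\widehat c_{\rho,2}([M])\bigr)$ separates the irreducibles of $G$.

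By Theorem~\ref{Thm1} together with Hopf's list, $G$ is cyclic, a binary polyhedral group, a member of the families $D_{2^{k+1}(2r+1)}$ or $P'_{8\cdot 3^k}$, or a direct product of one of the above with a cyclic group of coprime order. In each case the irreducible representations are known: classically for cyclic and binary polyhedral groups, and as detailed in Section~\ref{sec:irr.rep} for the remaining families, with the coprime direct products handled by tensoring irreducibles of the factors. The rank is the dimension of $\rho$, and the first CCS number controls the determinant character $\det\rho$. Once $\rank\rho$ and $\det\rho$ are fixed, the identity $\widehat c_{\rho,2}([M])=\tilde\xi_\rho(D)-\tilde\xi_{\det\rho}(D)$ implies that fixing $\widehat c_{\rho,2}([M])$ is equivalent to fixing the twisted invariant $\tilde\xi_\rho(D)$. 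The theorem therefore reduces to showing that any two non-isomorphic irreducibles of $G$ sharing the same dimension and the same determinant character are separated by their $\tilde\xi$-invariant.

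This final separation is proved by working through the list of groups. For cyclic and binary polyhedral $G$ it follows directly from the CCS-number computations already carried out in \cite{arciniegaetal:CCSC}: in the cyclic case all irreducibles are one-dimensional, so $\widehat c_{\rho,2}$ vanishes identically and $\widehat c_{\rho,1}$ alone distinguishes the characters, while for binary polyhedral groups the explicit tables match distinct irreducibles to distinct $\tilde\xi$-values. For the families $D_{2^{k+1}(2r+1)}$, $P'_{8\cdot 3^k}$, and their coprime cyclic extensions, the $\tilde\xi$-invariants must be computed from the matrix form of the irreducibles given in Section~\ref{sec:irr.rep} via the Atiyah--Patodi--Singer spectral formula for the Dirac operator on $\mathbb{S}^3/G$. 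The main obstacle is precisely this spectral computation: these groups have not previously appeared in the eta-invariant literature, so each family requires a hands-on calculation, and one must then verify the injectivity of $\rho\mapsto\tilde\xi_\rho(D)$ on the set of irreducibles of fixed dimension and determinant; it is exactly at this step that the subtle behaviour flagged in the introduction shows up.
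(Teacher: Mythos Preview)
Your overall strategy---reduce to irreducible representations of $\pi_1(M)$, fix rank and determinant via $\widehat c_{\rho,1}$, then separate by $\tilde\xi_\rho(D)$---is exactly the paper's approach, and you correctly flag the $D_{2^{k+1}(2r+1)}$ family as the place where the separation step resists a clean proof.

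Two points where the paper takes a different (and easier) route than what you outline. First, for $P'_{8\cdot 3^k}$ the paper never computes $\tilde\xi$-invariants at all: a direct look at the determinants of the explicit matrices in \eqref{eq:2drPp83k} and \eqref{eq:3drPp83k} shows that rank together with $\widehat c_{\rho,1}$ already separates all irreducibles (Proposition~\ref{prop:Pp83k.class}), so the second CCS class is redundant there. Second, for the coprime products $\Gamma\times\mathsf{C}_m$ the paper does \emph{not} compute $\tilde\xi$ on the product group. Instead it passes via Lemma~\ref{lem:c=ccs} to Chern classes in $\BG[(\Gamma\times\mathsf{C}_m)]$, uses the tensor-product formulae \eqref{eq:c1.tensor}--\eqref{eq:c2.tensor} and the arithmetic Lemma~\ref{lem:no.divisors} to peel off the $\mathsf{C}_m$ factor, and thereby reduces the product case to the already-settled factor cases. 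Your proposed direct spectral computation on $\mathbb{S}^3/(\Gamma\times\mathsf{C}_m)$ would in principle work but is considerably more laborious; the Chern-class reduction is the key organizational device that makes the product case tractable.

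A minor notational slip: $\widehat c_{\rho,1}$ lives in $H^1(M;\C/\Z)$ and is evaluated on generators of $H_1(M;\Z)\cong\Ab(\Gamma)$, not on the fundamental class $[M]$.
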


Theorem~\ref{Thm2} remains unproven for rank $2$ flat vector bundles over spherical $3$-manifolds $\mathbb{S}^3/\Gamma$ when $\Gamma = D_{2^{k+1}(2r+1)} \times \mathsf{C}_l$,
where $\mathsf{C}_l$ is a cyclic group of order relatively prime to $2^{k+1}(2r+1)$. We conjecture that Theorem~\ref{Thm2} holds in these cases (see Conjecture~\ref{conj:Dk2r.2d}).

Finally, in Section~\ref{sec:sing}, we revisit basics on reflexive modules, full sheaves and topological invariants of quotient surface singularities.
In particular, given a quotient singularity $(X,x)=(\C^2 / \Gamma,0)$ and its minimal resolution $\pi\colon\tilde{X}\to X$, by results of Gustavsen-Ile \cite{Gustavsen-Ile:RMNSSRLFG} and Esnault~\cite{Esnault:RMQSS} there is a one-to-one correspondence between representations of $\Gamma$, reflexive $\Ss{X}$-modules and full sheaves on $\tilde{X}$.
A particular case of \cite[Theorem~6.19]{arciniegaetal:CCSC} allows us to compute the invariant $\tilde{\xi}_\rho(D)$ on the minimal resolution $\tilde{X}$ using the full sheaf $\mathcal{M}$ corresponding to the representation $\rho$ under the aforementioned bijections, so we can see the invariant $\tilde{\xi}_\rho(D)$ as an invariant of $\mathcal{M}$.
Using Theorem~\ref{Thm2} we prove the main classification result:

\begin{Theorem}\label{Thm3}
Let $(X,x)$ be the germ of a quotient surface singularity. Isomorphism classes of indecomposable reflexive $\Ss{X}$-modules are determined by their rank, first Chern class and $\tilde{\xi}$-invariant.
\end{Theorem}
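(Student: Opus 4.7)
The plan is to transfer the classification problem on the singularity side to a classification problem for flat vector bundles on the link, and then to apply Theorem~\ref{Thm2} together with the relation between the $\tilde\xi$-invariant and the second Cheeger--Chern--Simons class. Concretely, let $(X,x)=(\C^2/\Gamma,0)$ be a quotient surface singularity and let $L=\mathbb{S}^3/\Gamma$ be its link, which by Theorem~\ref{Thm1} (or directly because $\Gamma\subset\U$) is a spherical $3$-manifold with $\pi_1(L)\cong\Gamma$. By the bijections of Gonzalez-Sprinberg--Verdier, Esnault and Gustavsen--Ile, an indecomposable reflexive $\Ss{X}$-module $M$ corresponds to an indecomposable representation $\rho\colon\Gamma\to\GLn{n}$, hence to an indecomposable flat vector bundle $E_\rho\to L$, and moreover to a full sheaf $\mathcal{M}=\pi^*M/\mathrm{tor}$ on the minimal resolution $\tilde X$. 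Throughout, rank, first Chern class and $\tilde\xi$-invariant will be read on whichever of these four avatars is most convenient.

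First, I would fix two indecomposable reflexive modules $M_1,M_2$ with the same rank $n$, the same first Chern class, and the same $\tilde\xi$-invariant, and denote by $\rho_1,\rho_2$ and $E_{\rho_1},E_{\rho_2}$ the associated representations and flat bundles. Equality of ranks on the two sides is immediate. The next step, and the first place where some care is needed, is to argue that equality of the first Chern classes $c_1(\mathcal{M}_1)=c_1(\mathcal{M}_2)$ on $\tilde X$ forces $\det\rho_1=\det\rho_2$ as characters of $\Gamma$: since $L$ is a rational homology sphere, first Chern classes of flat line bundles over $L$ are in bijection with characters, and the Gustavsen--Ile correspondence identifies the determinant line bundle $\det E_\rho$ with the flat bundle associated to the full sheaf $\det\mathcal{M}$ in a $c_1$-compatible way. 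In particular, the characters $\det\rho_1$ and $\det\rho_2$ coincide, so the first Cheeger--Chern--Simons classes of $E_{\rho_1}$ and $E_{\rho_2}$ agree.

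Next I would use the Atiyah--Patodi--Singer formula
\[
\widehat c_{\rho,2}([L])=\tilde\xi_\rho(D)-\tilde\xi_{\det(\rho)}(D),
\]
established in \cite[Theorems~1 \& 2]{arciniegaetal:CCSC}, to promote equality of $\tilde\xi$-invariants into equality of the second CCS classes. Indeed, from the previous step $\det\rho_1=\det\rho_2$, hence $\tilde\xi_{\det(\rho_1)}(D)=\tilde\xi_{\det(\rho_2)}(D)$; combining this with the hypothesis $\tilde\xi_{\rho_1}(D)=\tilde\xi_{\rho_2}(D)$ (which by the quoted special case of \cite[Theorem~6.19]{arciniegaetal:CCSC} can be computed on the resolution from $\mathcal{M}_i$) gives $\widehat c_{\rho_1,2}([L])=\widehat c_{\rho_2,2}([L])$. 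At this point $E_{\rho_1}$ and $E_{\rho_2}$ are indecomposable flat vector bundles on the spherical $3$-manifold $L$ with the same rank, the same first CCS class, and the same second CCS class, so Theorem~\ref{Thm2} yields $E_{\rho_1}\cong E_{\rho_2}$, whence $\rho_1\cong\rho_2$ and finally $M_1\cong M_2$.

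The main obstacle, and the step where one has to be careful rather than just assemble, is the compatibility between the \emph{algebro-geometric} first Chern class $c_1(\mathcal{M})\in H^2(\tilde X;\Z)$ and the \emph{differential-geometric} first CCS class of the associated flat bundle on $L$ under the Gustavsen--Ile correspondence. This is what makes the hypothesis on $c_1$ in Theorem~\ref{Thm3} strong enough to feed into the APS relation above, and I would devote a separate lemma in Section~\ref{sec:sing} to it, expressing both invariants in terms of the character $\det\rho$ via the exponential sequence on $\tilde X$ and the identification of flat line bundles on $L$ with $\Hom(\Gamma,\U[1])$. Once that compatibility is in place, the proof is a clean reduction to Theorem~\ref{Thm2}, and the exceptional cases $\Gamma=D_{2^{k+1}(2r+1)}\times\mathsf{C}_l$ in rank $2$ are inherited verbatim from the exception in Theorem~\ref{Thm2}, giving the dihedral caveat stated in the abstract.
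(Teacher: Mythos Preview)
Your proposal is correct and follows essentially the same route as the paper: reduce to the link via the Esnault/Gustavsen--Ile correspondences, show that equality of $c_1$ on $\tilde X$ forces $\det\rho_1=\det\rho_2$ (hence equality of first CCS classes), feed the hypothesis $\tilde\xi_{\rho_1}=\tilde\xi_{\rho_2}$ into the APS formula $\widehat c_{\rho,2}([L])=\tilde\xi_\rho(D)-\tilde\xi_{\det\rho}(D)$ to obtain equality of second CCS numbers, and then invoke Theorem~\ref{Thm2}. The compatibility step you flag as the ``main obstacle'' is handled in the paper exactly as you suggest, via a commutative diagram linking $c_1$ on $\tilde X$, the restriction $\iota^*\colon H^2(\tilde X;\Z)\to H^2(L;\Z)$, and the isomorphism $\Phi=c_1\colon H^1(L;\C/\Z)\to H^2(L;\Z)$ coming from the exponential sequence; the exceptional dihedral case is inherited verbatim from Theorem~\ref{Thm2}, just as you say.
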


Since Theorem~\ref{Thm2} underlies Theorem~\ref{Thm3}, the case of rank $2$ full sheaves on quotient singularities $(\mathbb{C}^2/\Gamma, 0)$ with $\Gamma = D_{2^{k+1}(2r+1)} \times \mathsf{C}_l$ remains open, pending the resolution of Conjecture~\ref{conj:Dk2r.2d}. In Riemenschneider's notation, these groups correspond to the dihedral groups $\mathbb{D}_{n,q}$ with $\gcd(m,2)=2$ (see Theorem~\ref{thm:D.T}).

We include an appendix on Chern classes of tensor products of representations.

\section{Spherical \texorpdfstring{$3$}{3}-manifolds and quotient surface singularities}\label{sec:groups.lists}

In this section we prove that every spherical $3$-manifold appears as the link of a quotient surface singularity.

\subsection{Spherical \texorpdfstring{$3$}{3}-manifolds}

Let $\mathbb{S}^3$ be the unit sphere in $\mathbb{R}^4$. The group $\SO{4}$ of rotations of $\mathbb{R}^4$ fixes $\mathbb{S}^3$.
A \textit{spherical $3$-manifold} $M$ is a manifold of the form $M=\mathbb{S}^3/G$, where $G$ is a finite subgroup of $\SO{4}$ acting freely by rotations on  $\mathbb{S}^3$.
All such manifolds are prime, orientable and closed.
The projection $\mathbb{S}^3\to M$ is the universal cover and therefore the fundamental group $\pi_1(M)$ of $M$ is isomorphic to $G$.
The elliptization conjecture by Thurston \cite[p.~28]{Thurston:GT3M}, proved by Perelman \cite[Theorem~1.7.3]{Aschenbrenner-etal:3MG}, states that conversely all compact oriented $3$-manifolds with finite fundamental group are spherical $3$-manifolds.
Thus, spherical $3$-manifolds are classified by the finite subgroups of $\SO{4}$ acting freely and isometrically on $\mathbb{S}^3$.
In order to give a description of such groups, suitable for our purposes, we need to recall the binary polyhedral  groups.
Let $\mathbb{H}$ denote the algebra of quaternions, we can identify $\mathbb{S}^3$ with the group of unit quaternions.
Let $\langle 2,s,t\rangle$ denote the group given by the presentation
\begin{equation}\label{eq:2st}
\langle 2,s,t\rangle=\langle b,c\,|\,(bc)^2=b^s=c^t\rangle.
\end{equation}
The binary polyhedral groups are the finite subgroups of $\mathbb{S}^3$ given in Table~\ref{tbl:BPG}:\\
\begin{table}[H]
\begin{tabular}{llll}
\textbf{Binary dihedral groups} & $\mathsf{BD}_{2t}=\langle 2,2,t\rangle$, & order $4t$, & $t\geq2$,\\
\textbf{Binary tetrahedral group} & $\mathsf{BT}=\langle 2,3,3\rangle$, & order $24$, &\\
\textbf{Binary octahedral group} & $\mathsf{BO}=\langle 2,3,4\rangle$, & order $48$, &\\
\textbf{Binary icosahedral group} & $\mathsf{BI}=\langle 2,3,5\rangle$, & order $120$. &\\[10pt]
\end{tabular}
\caption{Binary polyhedral groups.}\label{tbl:BPG}
\end{table}
The binary polyhedral groups, together with the finite cyclic groups give all the finite subgroups of $\mathbb{S}^3$ \cite{Cayley:Poly,Klein:Icosahedron,Lamotke:RSIS}.
If $\Gamma$ is a finite subgroup of $\mathbb{S}^3$ then it acts on $\mathbb{S}^3$ freely and isometrically by left multiplication.

The following Lemma gives another presentation for the groups $\mathsf{BT}$, $\mathsf{BO}$ and $\mathsf{BI}$.
Let $P_{\frac{24n}{6-n}}$ denote the group given by the presentation (see \cite[p.~628]{Milnor:GASNWFP})
\begin{equation}\label{eq:Pn}
P_{\frac{24n}{6-n}}=\langle x,y\,|\,x^2=(xy)^3=y^n,\ x^4=1\rangle, \quad\text{with $n=3,4,5$.}
\end{equation}

\begin{lemma}\label{lem:P=BP}
The groups $\langle 2,3,n\rangle$ and $P_{\frac{24n}{6-n}}$ are isomorphic when $n=3,4,5$.
\end{lemma}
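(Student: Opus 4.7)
The plan is to construct the isomorphism by direct substitution of generators, and the natural candidate for a map $\psi\colon \langle 2,3,n\rangle \to P_{\frac{24n}{6-n}}$ is $b \mapsto xy$, $c \mapsto y^{-1}$. To show $\psi$ is well-defined I would check that the images of the three defining relators of $\langle 2,3,n\rangle$ coincide in $P_{\frac{24n}{6-n}}$: the images of $(bc)^2$, $b^3$, $c^n$ are $x^2$, $(xy)^3$, $y^{-n}$, and the $P$-relations $x^2 = (xy)^3 = y^n$ together with $x^4 = 1$ force these three elements to agree. This half is essentially routine manipulation of the two presentations.

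For the inverse direction I would try the symmetric assignment $x \mapsto bc$, $y \mapsto c^{-1}$. Writing $z := (bc)^2 = b^3 = c^n$, the four $P$-relators $x^2$, $(xy)^3$, $y^n$, $x^4$ map to $z$, $b^3 = z$, $c^{-n} = z^{-1}$, and $z^2$ respectively, so well-definedness of the reverse map reduces to the single identity $z^2 = 1$ in $\langle 2,3,n\rangle$. This is the main obstacle: the defining relations of $\langle 2,3,n\rangle$ do not formally imply that the common central element has order $2$. I expect to resolve it by appealing to the classical identification of $\langle 2,3,n\rangle$, for $n = 3, 4, 5$, with the finite binary polyhedral group sitting inside $\mathrm{SU}(2)$, in which $z$ corresponds to the central involution $-I$. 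Alternatively, one can avoid constructing $\phi$ altogether: $\psi$ is visibly surjective since $\psi(b)\psi(c) = x$ and $\psi(c)^{-1} = y$, and both groups are finite of the same order $\frac{24n}{6-n}$ by standard order formulas for the binary $(2,3,n)$-triangle group on one side and by Milnor's order assignment on the other, so $\psi$ must be an isomorphism.

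With $z^2 = 1$ in hand, verifying that $\psi$ and its proposed inverse $\phi$ are actually mutually inverse is a line-by-line check on generators: $\psi(\phi(x)) = (xy)\,y^{-1} = x$, $\psi(\phi(y)) = y$, $\phi(\psi(b)) = (bc)\,c^{-1} = b$, and $\phi(\psi(c)) = c$. Combining these observations yields the asserted isomorphism.
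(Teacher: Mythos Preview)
Your argument is correct and follows essentially the same route as the paper: the same change of generators $x=bc$, $y=c^{-1}$ (equivalently $b=xy$, $c=y^{-1}$), with the only nontrivial ingredient being the identity $z^2=1$ in $\langle 2,3,n\rangle$. The paper handles that point by citing Coxeter's result in \cite[\S4]{Coxeter:BPGOGQG}, while you propose either the $\mathrm{SU}(2)$ realization or an order count; otherwise the proofs coincide, and your version is in fact slightly more complete in that you explicitly exhibit both maps and check they are mutually inverse.
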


\begin{proof}
In \cite[\S4]{Coxeter:BPGOGQG} Coxeter proved that the relations $z=(bc)^2=b^3=c^n$ of the group $\langle 2,3,n\rangle$, with $n=3,4,5$, imply that
\begin{equation}\label{eq:ord2}
 z^2=1.
\end{equation}
This implies that $c^{-n}=c^n$. Set $x=bc$ and $y=c^{-1}$. Thus we have
\begin{equation*}
 x^2=(bc)^2=b^3=(bcc^{-1})^3=(xy)^3=c^n=c^{-n}=y^n.
\end{equation*}
Also by \eqref{eq:ord2} we have that $x^4=(bc)^4=z^2=1$. 
\end{proof}

By \cite{Hopf:ZCKR} a finite subgroup of $\SO{4}$ acting freely on $\mathbb{S}^3$ is isomorphic to one of the groups given in Theorem~\ref{thm:fsgfi} (the list is taken from \cite[\S1.7]{Aschenbrenner-etal:3MG}, see also \cite[\S3]{Milnor:GASNWFP}):
\begin{theorem}\label{thm:fsgfi}
The finite subgroups of $\SO{4}$ acting freely and isometrically on $\mathbb{S}^3$:
\begin{enumerate}[(1)]
 \item The trivial group,\label{it:ffi.triv}
 \item $Q_{4t}:=\langle x,y\mid x^2=(xy)^2=y^t\rangle\cong\mathsf{BD}_{2t}$ where $t\geq2$,\label{it:ffi.Q}
 \item $P_{24}:=\langle x,y\,|\,x^2=(xy)^3=y^3,\ x^4=1\rangle\cong\mathsf{BT}$,\label{it:ffi.P24}
 \item $P_{48}:=\langle x,y\,|\,x^2=(xy)^3=y^4,\ x^4=1\rangle\cong\mathsf{BO}$,\label{it:ffi.P48}
 \item $P_{120}:=\langle x,y\,|\,x^2=(xy)^3=y^5,\ x^4=1\rangle\cong\mathsf{BI}$,\label{it:ffi.P120}
 \item $D_{2^{k+1}(2r+1)}:=\langle x,y\mid x^{2^{k+1}}=1, y^{2r+1}=1, xyx^{-1}=y^{-1}\rangle$, where $k>1$, $r\geq1$,\label{it:ffi.Dmk}
 \item $P'_{8\cdot 3^k}:=\langle x,y,z\mid x^2=(xy)^2=y^2,zxz^{-1}=y, zyz^{-1}=xy, z^{3^k}=1\rangle$, where $k\geq2$,\label{it:ffi.Pk}
 \item the direct product of any of the above groups with a cyclic group of relatively prime order.\label{it:Gammaxcyc}
\end{enumerate}
\end{theorem}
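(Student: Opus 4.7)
The plan is to use the spin double cover $\varphi\colon\mathbb{S}^3\times\mathbb{S}^3\twoheadrightarrow\SO{4}$ defined on $\mathbb{R}^4\cong\mathbb{H}$ by $\varphi(p,q)(x)=pxq^{-1}$, with kernel $\{(1,1),(-1,-1)\}$. Any finite subgroup $G\subset\SO{4}$ lifts to a finite subgroup $\widetilde{G}=\varphi^{-1}(G)\subset\mathbb{S}^3\times\mathbb{S}^3$ containing this kernel. Since the conjugacy classes in $\mathbb{S}^3\cong\SU$ are parametrised by the real part of a unit quaternion, the action of $G$ on $\mathbb{S}^3$ is free if and only if every $(p,q)\in\widetilde{G}\setminus\{(1,1),(-1,-1)\}$ satisfies $\operatorname{Re}(p)\neq\operatorname{Re}(q)$.

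Next, I would apply Goursat's lemma to $\widetilde{G}\subset L\times R$, where $L,R$ are the images of the two coordinate projections and $L_0,R_0$ the kernels of the projections to the opposite factor, presenting $\widetilde{G}$ as a fibre product of $L$ and $R$ over a common quotient $L/L_0\cong R/R_0$. Combined with the classical classification \cite{Cayley:Poly,Klein:Icosahedron,Lamotke:RSIS} of finite subgroups of $\mathbb{S}^3$ as either cyclic or one of the binary polyhedral groups $\mathsf{BD}_{2t},\mathsf{BT},\mathsf{BO},\mathsf{BI}$ of Table~\ref{tbl:BPG}, this reduces the classification to a finite list of candidate quadruples $(L,R,L_0,R_0)$ together with a gluing isomorphism $L/L_0\cong R/R_0$.

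Imposing the freeness criterion on each candidate then cuts the list down to precisely the groups stated. The cases where $L=\{1\}$ yield, via the embedding $\widetilde{G}\hookrightarrow\{1\}\times\mathbb{S}^3$ as a subgroup acting by right multiplication, items (\ref{it:ffi.triv})--(\ref{it:ffi.P120}) (the polyhedral ones being rewritten via Lemma~\ref{lem:P=BP}); simultaneous free left and right rotations by cyclic factors of coprime orders produce the direct products of item (\ref{it:Gammaxcyc}); and the two genuinely mixed fibre products, in which $L/L_0$ acts non-trivially by outer automorphisms on $R_0$, produce the exotic families (\ref{it:ffi.Dmk}) and (\ref{it:ffi.Pk}).

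The main obstacle will be these two exotic families $D_{2^{k+1}(2r+1)}$ and $P'_{8\cdot 3^k}$. For $P'_{8\cdot 3^k}$ the natural choice is $R=\mathsf{BT}$, $R_0=Q_8$ its unique Sylow $2$-subgroup, and $L$ cyclic of order $3^k$ glued along the order-$3$ outer automorphism realised by $\mathsf{BT}/Q_8$; for $D_{2^{k+1}(2r+1)}$ one takes $R=\mathsf{BD}_{2(2r+1)}$ with $R_0$ the maximal cyclic subgroup of order $2(2r+1)$, and $L$ cyclic of order $2^{k+1}$ glued along the quotient $R/R_0\cong\mathbb{Z}/2$, so that the reflecting generator of $R$ supplies the inversion $xyx^{-1}=y^{-1}$. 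Verifying that the fibre products have precisely the presentations (\ref{it:ffi.Dmk}) and (\ref{it:ffi.Pk}), and that no element violates the non-conjugacy freeness criterion, requires a careful quaternionic computation; this is the step where reference to Hopf \cite{Hopf:ZCKR} and \cite[\S3]{Milnor:GASNWFP} most efficiently substitutes for case-by-case bookkeeping.
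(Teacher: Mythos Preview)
The paper does not prove this theorem at all: it is quoted as a known result, with citations to Hopf \cite{Hopf:ZCKR} for the original classification and to \cite[\S1.7]{Aschenbrenner-etal:3MG} and \cite[\S3]{Milnor:GASNWFP} for the specific list of presentations. The only content the paper adds is the subsequent remark explaining why the small-parameter cases $k=1$ in (\ref{it:ffi.Dmk}) and (\ref{it:ffi.Pk}) are redundant, and recording the isomorphisms with the binary polyhedral presentations of Table~\ref{tbl:BPG}.

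Your sketch therefore goes well beyond what the paper does, and the Goursat/fibre-product framework you describe is indeed the standard route to Hopf's list. One technical slip: since $\widetilde{G}=\varphi^{-1}(G)$ always contains $(-1,-1)$, the left projection $L$ always contains $\{\pm1\}$, so the case ``$L=\{1\}$'' never literally occurs; the purely one-sided actions correspond to $L=\{\pm1\}$ with the gluing over $L/L_0$ trivial. Apart from that, your identification of the two exotic families via the order-$3$ outer automorphism of $Q_8\lhd\mathsf{BT}$ and the order-$2$ outer automorphism of the index-$2$ cyclic subgroup of $\mathsf{BD}_{2(2r+1)}$ is correct in spirit. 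Since you yourself close by deferring the verification to Hopf and Milnor, your proposal and the paper ultimately rest on the same citations; the difference is only that you have sketched what those references contain, while the paper simply invokes them.
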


\begin{remark}\label{rem:isos}
The subindex in the name of the group is the order of the group.
The group $D_{4(2r+1)}$ is isomorphic to the group $\mathsf{BD}_{2(2r+1)}$ for every $r\geq1$ \cite[p.~628]{Milnor:GASNWFP}, for this reason in \eqref{it:ffi.Dmk} we take $k>1$. Using, respectively, the presentations \eqref{it:ffi.Dmk} and $\langle 2,2,2r+1\rangle$ for $D_{4(2r+1)}$ and
$\mathsf{BD}_{2(2r+1)}$, an isomorphism and its inverse are
\begin{align}\label{eq:Dmk.BDk}
 x&\mapsto b,\quad y\mapsto c^2, &\text{and} & & b&\mapsto x,\quad c\mapsto y^{r+1}x^2.
\end{align}
The group $P'_{24}$ is isomorphic to the group $P_{24}$.
Using the presentation \eqref{it:ffi.Pk} with $k=1$ for $P'_{24}$ and presentation \eqref{it:ffi.P24} for $P_{24}$, using capital letters for the generator of $P'_{24}$ to distinguish them from the generators of $P_{24}$, an isomorphism and its inverse are given by
\begin{align}\label{eq:P24.P24}
X&\mapsto yxy^{-1},\quad Y\mapsto x,\quad Z\mapsto y^2,  &\text{and} & & x&\mapsto Y,\quad y\mapsto X^{-1}Z^{-1}Y^{-1}.
\end{align}
In \eqref{it:ffi.Q} the presentation of $Q_{4t}$ is precisely the presentation $\langle 2,2,t\rangle$ of $\mathsf{BD}_{2t}$.
In \eqref{it:ffi.P24}, \eqref{it:ffi.P48} and \eqref{it:ffi.P120} the isomorphisms are given by Lemma~\ref{lem:P=BP}.
\end{remark}

\subsection{Quotient surface singularities}\label{ssec:qss}

Let $\Gamma$ be a finite subgroup of $\GLn{2}$. Cartan proved in \cite{Cartan:QEAGA} that the quotient space $\mathbb{C}^2/\Gamma$ is a normal analytic surface with an isolated singularity
by embedding it in $\mathbb{C}^q$, for some $q\in\mathbb{N}$, with image a normal analytic surface.
An element of $\GLn{2}$ is a \textit{pseudo-reflexion} if it fixes a line, that is, if it has $1$ as an eigenvalue.
In \cite{Prill:LCQCMDG} Prill call a subgroup $\Gamma$ of $\GLn{2}$ \textit{small} if no $g\in \Gamma$  is a pseudo-reflexion.
A \textit{quotient surface singularity} is a germ $(X,x)\cong(\mathbb{C}^2/\Gamma,0)$ with $\Gamma$ a small finite subgroup of $\GLn{2}$.
We say that two quotient singularities are \textit{equivalent} if they are analytically isomorphic. Also in \cite{Prill:LCQCMDG} Prill
proved that if $\Gamma$ and $\Gamma'$ are two small finite subgroups of $\GLn{2}$ then $(\mathbb{C}^2/\Gamma,0)$ and $(\mathbb{C}^2/\Gamma',0)$ are equivalent if and only if $\Gamma$ and $\Gamma'$ are conjugate.
Any finite subgroup of $\GLn{2}$ is conjugate to a finite subgroup of $\U$ \cite[Lemma~A.17]{Pe:NPQSS}, thus,
the classification of quotient surface singularities is equivalent to the classification of small finite subgroups of $\U$.
The list of such groups was given by Brieskorn in \cite[Satz~2.9]{Brieskorn:RSKF} (see also \cite{Pe:NPQSS}).
All the finite subgroups of $\SU$ are small, and since $\SU\cong\mathbb{S}^3$, they are the binary polyhedral groups given in Table~\ref{tbl:BPG} together with the finite cyclic groups.
The surface quotient singularities $\mathbb{C}^2/\Gamma$ with $\Gamma$ a finite subgroup of $\SU$ are the rational double point singularities.

Using the embedding of a quotient surface singularity $(X,x)\cong(\mathbb{C}^2/\Gamma,0)$ in $\mathbb{C}^q$, the \textit{link} $L$ of $(X,x)$
is defined as the transverse intersection $L=X\cap\mathbb{S}^{2q-1}_\epsilon$, where $\mathbb{S}^{2q-1}_\epsilon=\{\,z\in\mathbb{C}^q\mid |z|=\epsilon\,\}$ and $\epsilon>0$ is small enough.
Thus the link $L$ is a smooth compact oriented $3$-manifold. The diffeomorphism type of $L$ does not depend on the embedding nor on $0<\epsilon\ll 1$.
Since we can take $\Gamma\leq \U$, it preserves the unit sphere $\mathbb{S}^3$ in $\mathbb{C}^2$, so one can get the link taking the quotient of $\mathbb{S}^3$ by $\Gamma$, that is $L=\mathbb{S}^3/\Gamma$.
Therefore, the links of quotient surface singularities are spherical $3$-manifolds.
In fact, by Brieskorn \cite[Satz~2.8]{Brieskorn:RSKF} quotient surface singularities are the normal surface singularities whose link has finite fundamental group.
To see that every spherical $3$-manifold appears as the link of a quotient surface singularity we need to see that the list of small finite subgroups of
$\U$ coincides with the list of groups given in Theorem~\ref{thm:fsgfi}.
We need some preparation in order to describe the list of small finite subgroups of $\U$.

\subsubsection{Cyclic subgroups of $\U$}
Every cyclic subgroup of $\U$ is conjugate to one of the following cyclic groups
\begin{equation}\label{eq:cyclic.U2}
 \mathbb{C}_{n,q}=\left\langle
 \begin{pmatrix}
  \zeta_n & 0\\
  0 & \zeta_n^q
 \end{pmatrix} \right\rangle\qquad 0<q<n,\quad (n,q)=1,
\end{equation}
where $\zeta_n$ is the primitive root of unity $e^{\frac{2\pi i}{n}}$. The groups $\mathbb{C}_{p,q}$ with $0<q<n$ and $(n,q)=1$ are small if $n>1$.

\begin{remark}\label{rem:Cnp.Cmq}
Two groups $\mathbb{C}_{n,q}$ and $\mathbb{C}_{n',q'}$ with $(n,q)=(n',q')=1$ are conjugate if and only if $n=n'$ and either $q=q'$ or $qq'\equiv 1 \mod n$ \cite[Lemma~A.12]{Pe:NPQSS}.
\end{remark}

\subsubsection{Extension of binary polyhedral groups}
Coxeter and Moser in \cite[p.~71]{Coxeter-Moser:GRDG} give an extension of the binary polyhedral groups, defined by the presentation
\begin{equation*}
 \langle 2,s,t\rangle_m=\langle (bc)^2=b^s=c^t=z^m,\ z\rightleftarrows bc\rangle,
\end{equation*}
where $z\rightleftarrows bc$ means that $z$ commutes with $b$ and $c$.
They are denoted by $\langle 2,s,t\rangle_m$, since $\langle 2,s,t\rangle_1=\langle 2,s,t\rangle$. We have that \cite[(6.59)]{Coxeter-Moser:GRDG}
\begin{equation}\label{eq:prod}
 \langle 2,s,t\rangle_m\cong\langle 2,s,t\rangle\times\mathsf{C}_m,\qquad \text{when $m$ is odd,}
\end{equation}
where $\mathsf{C}_m$ is the cyclic group of order $m$.

\subsubsection{Riemenschneider's matrices}
We shall follow Riemenschneider's notation in \cite[p.~38]{Riemenschneide:IEUGL2C} where the generator of the small finite subgroups of $\U$ are given.
Let $\zeta_k=e^{\frac{2\pi i}{k}}$, $k\in\mathbb{N}$. Set
\begin{align*}
 \phi_k&=\begin{pmatrix}
          \zeta_k  & 0 \\ 0 & \zeta_k
         \end{pmatrix},  &
 \psi_k&=\begin{pmatrix}
          \zeta_k  & 0 \\ 0 & \zeta_k^{-1}
         \end{pmatrix},  &
 \eta&=\frac{1}{\sqrt{2}}\begin{pmatrix}
                         \zeta_8 &    \zeta_8^3 \\ \zeta_8 & \zeta_8^7
                         \end{pmatrix},  \\
 \tau&=\begin{pmatrix}
        0 & i \\ i & 0
       \end{pmatrix}, &
 \omega&=\begin{pmatrix}
          \zeta_5^3 & 0 \\ 0 & \zeta_5^2
         \end{pmatrix}, &
 \iota&=\frac{1}{\sqrt{5}}\begin{pmatrix}
                           \zeta_5^4-\zeta_5 & \zeta_5^2-\zeta_5^3\\ \zeta_5^2-\zeta_5^3 & \zeta_5-\zeta_5^4
                          \end{pmatrix},\\
 \sigma&=\begin{pmatrix}
          0 & -1 \\ 1 & 0
         \end{pmatrix}. & & & &
\end{align*}

\subsubsection{Finite small subgroups of $\U$}

The list of the finite subgroups of $\U$ is given by the first $9$ families of groups in \cite[p.~98]{Coxeter:RCP} (see also \cite[p.~57]{DuVal:HQR}), among these groups,
Brieskorn in \cite[Satz~2.9]{Brieskorn:RSKF} found the conditions in order to have \textbf{small} subgroups obtaining only 7 families. In \cite[p.~38]{Riemenschneide:IEUGL2C} Riemenschneider gives the generators of these groups and divides Brieskorn's $7$ families into $5$ types. Combining the information of \cite[p.~98]{Coxeter:RCP}, \cite[Satz~2.9]{Brieskorn:RSKF} and \cite[p.~38]{Riemenschneide:IEUGL2C} we get:

\begin{theorem}\label{thm:fssu2}
Each small finite subgroup of $\U$ is conjugate to one of the following groups:
\begin{align*}
\intertext{\textbf{Cyclic groups.} Let $0<q<n$ with $\gcd(n,q)=1$,}
 \mathbb{C}_{n,q}&=\left\langle\left(\begin{smallmatrix}
                                                 \zeta_n & 0 \\ 0 & \zeta_n^q
                                                \end{smallmatrix}\right)\right\rangle,\quad \text{order $n$.}\\
\intertext{\textbf{Dihedral groups.} Let $0<q<n$ with $\gcd(n,q)=1$ and $m=n-q$,}
 \mathbb{D}_{n,q}&=\begin{cases}
                   \langle 2,2,q\rangle_m\cong\mathsf{BD}_{2q}\times\mathsf{C}_m\cong\langle\psi_{2q},\tau,\phi_{2m}\rangle, & \gcd(m,2)=1,\\
                   \langle\psi_{2q},\tau\phi_{4m}\rangle, & \gcd(m,2)=2,
                  \end{cases}\quad \text{order $4qm$.}\\
\intertext{\textbf{Tetrahedral groups.}}
 \mathbb{T}_m&=\begin{cases}
               \langle 2,3,3\rangle_m\cong\mathsf{BT}\times\mathsf{C}_m\cong\langle\psi_4,\tau,\eta,\phi_{2m}\rangle, & \gcd(m,6)=1,\\
               \langle\psi_4,\tau,\eta\phi_{6m}\rangle, & \gcd(m,6)=3,\\
              \end{cases}\quad \text{order $24m$.}\\
\intertext{\textbf{Octahedral groups.}}
 \mathbb{O}_m&=\langle 2,3,4\rangle_m\cong\mathsf{BO}\times\mathsf{C}_m\cong\langle\psi_8,\tau,\eta,\phi_{2m}\rangle,\qquad \gcd(m,6)=1,\quad \text{order $48m$.}\\
\intertext{\textbf{Icosahedral groups.}}
 \mathbb{I}_m&=\langle 2,3,5\rangle_m\cong\mathsf{BI}\times\mathsf{C}_m\cong\langle\sigma,\omega,\iota,\phi_{2m}\rangle,\qquad \gcd(m,30)=1,\quad \text{order $120m$.}
\end{align*}
\end{theorem}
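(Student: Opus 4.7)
The plan is to assemble the theorem from three classical sources, each handling a distinct layer of the classification: Coxeter's enumeration \cite{Coxeter:RCP} of all finite subgroups of $\U$, Brieskorn's smallness criterion \cite{Brieskorn:RSKF}, and Riemenschneider's explicit matrix generators \cite{Riemenschneide:IEUGL2C}. The theorem is essentially the coherent packaging of these three steps, and the only genuinely non-routine input is the identification of Riemenschneider's matrix generators in the two non-split families.

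First I would recall Coxeter's list of nine families. These are obtained from the surjection $\SU\times\U[1]\twoheadrightarrow\U$, $(A,\lambda)\mapsto \lambda A$, whose kernel is $\{\pm(I,1)\}$, combined with the classification of finite subgroups of $\SU\cong\mathbb{S}^3$ (cyclic and binary polyhedral groups, Table~\ref{tbl:BPG}) and of $\U[1]$ (cyclic). A finite subgroup of $\U$ is determined by its images in the two factors modulo the kernel together with a compatibility isomorphism between suitable central quotients, yielding Coxeter's nine families indexed by a binary polyhedral (or cyclic) type and one or two cyclic parameters.

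Second, I would apply Brieskorn's smallness criterion family by family. An element $g\in\U$ is a pseudo-reflection iff $1$ is an eigenvalue, a condition that in each of Coxeter's families translates into an arithmetic divisibility on the parameters $(n,q)$ or $(s,t,m)$ via direct inspection of the eigenvalues of Riemenschneider's generators. In the cyclic case it forces $\gcd(n,q)=1$; in the dihedral, tetrahedral, octahedral and icosahedral families it produces Brieskorn's conditions $\gcd(m,2)\in\{1,2\}$, $\gcd(m,6)\in\{1,3\}$, $\gcd(m,6)=1$ and $\gcd(m,30)=1$ respectively. Two of Coxeter's nine families are eliminated outright, leaving Brieskorn's seven, which Riemenschneider regroups into the five types stated in the theorem.

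Finally, I would verify that Riemenschneider's matrices generate the claimed abstract groups. When $m$ is coprime to the order of the centre of the ambient binary polyhedral group, equation~\eqref{eq:prod} gives the direct product decomposition $\langle 2,s,t\rangle_m\cong\langle 2,s,t\rangle\times\mathsf{C}_m$; it is then straightforward to check that $\psi_{2q},\tau,\phi_{2m}$ (respectively $\psi_4,\tau,\eta,\phi_{2m}$ for $\mathbb{T}_m$, and the analogous lists for $\mathbb{O}_m$ and $\mathbb{I}_m$) satisfy the defining relations and generate a subgroup of $\U$ of the correct order. The hard part is the two non-split cases: the dihedral family $\mathbb{D}_{n,q}$ with $\gcd(m,2)=2$, generated by $\psi_{2q}$ and $\tau\phi_{4m}$, and the tetrahedral family $\mathbb{T}_m$ with $\gcd(m,6)=3$, generated by $\psi_4,\tau,\eta\phi_{6m}$. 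Here the cyclic factor does not split off and one must verify by a bare-hands matrix computation that the subgroup so generated has order $4qm$ (resp.\ $24m$), contains the predicted centre, and realises the non-trivial central extension that Brieskorn's analysis predicts. Tracking how the phase factors $\phi_{4m}$ and $\phi_{6m}$ interact with the binary dihedral and tetrahedral generators modulo roots of unity is the technical heart of the proof.
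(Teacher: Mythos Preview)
Your proposal is correct and matches the paper's approach exactly: the paper does not give an independent proof of this theorem but presents it as the combination of Coxeter's enumeration \cite[p.~98]{Coxeter:RCP}, Brieskorn's smallness conditions \cite[Satz~2.9]{Brieskorn:RSKF}, and Riemenschneider's explicit generators \cite[p.~38]{Riemenschneide:IEUGL2C}, which is precisely the three-layer assembly you describe. Your sketch actually fills in more detail than the paper does here; the matrix computations you flag as the ``technical heart'' in the two non-split families are not carried out in the paper as part of this theorem but are taken as input from Riemenschneider, and the paper only revisits those two families explicitly in the subsequent Theorem~\ref{thm:D.T} for the separate purpose of matching them with the $\SO{4}$ list.
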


\begin{remark}\label{rem:rpo}
Note the following:
\begin{itemize}
\item For the Dihedral groups, the condition $\gcd(n,q)=1$ implies $\gcd(m,q)=1$,

when $m$ is odd, this implies that $\gcd(m,4q)=1$, hence, by \eqref{eq:prod} we obtain the product of
$\mathsf{BD}_{2q}$ with a cyclic group of order $m$, relatively prime to $4q$, the order of $\mathsf{BD}_{2q}$.
\item For the Tetrahedral groups with $\gcd(m,6)=1$, this implies that $m$ is odd and $\gcd(m,24)=1$, hence, by \eqref{eq:prod} we obtain the product of
$\mathsf{BT}$ with a cyclic group of order $m$, relatively prime to $24$, the order of $\mathsf{BT}$.
\item For the Octahedral groups, $\gcd(m,6)=1$ implies that $m$ is odd and $\gcd(m,48)=1$, hence, by \eqref{eq:prod} we obtain the product of
$\mathsf{BO}$ with a cyclic group of order $m$, relatively prime to $48$, the order of $\mathsf{BO}$.
\item For the Icosahedral groups, the condition on $\gcd(m,30)=1$ implies that $m$ is odd and $\gcd(m,120)=1$, hence, by \eqref{eq:prod} we obtain the product of
$\mathsf{BI}$ with a cyclic group of order $m$, relatively prime to $120$, the order of $\mathsf{BI}$.
\end{itemize}
\end{remark}

By Remark~\ref{rem:rpo} we have that families \eqref{it:ffi.triv}, \eqref{it:ffi.Q}, \eqref{it:ffi.P24}, \eqref{it:ffi.P48} and \eqref{it:ffi.P120} of groups in Theorem~\ref{thm:fsgfi} and their direct products with cyclic groups of relatively prime order correspond, respectively, to the cyclic groups $\mathbb{C}_{n,q}$, the dihedral groups $\mathbb{D}_{n,q}$ with $\gcd(n-q,2)=1$, the
tetrahedral groups $\mathbb{T}_m$ with $\gcd(m,6)=1$, the octahedral groups $\mathbb{O}_m$ and the icosahedral groups $\mathbb{I}_m$ in Theorem~\ref{thm:fssu2}.
It remains to prove that the families \eqref{it:ffi.Dmk} and \eqref{it:ffi.Pk} and their direct products with cyclic groups of relatively prime order, correspond, respectively, to the dihedral groups $\mathbb{D}_{n,q}$ with $\gcd(n-q,2)=2$ and the tetrahedral groups $\mathbb{T}_m$ with $\gcd(m,6)=3$.

\begin{theorem}\label{thm:D.T}
One has the following isomorphisms:
\begin{enumerate}[(I)]
\item Let $0<q<n$ with $\gcd(n,q)=1$ and $m=n-q=2^{k-1}l$ with $l$ odd and $k\geq2$. Then $\gcd(2^{k+1}q,l)=1$ and\label{it:T.A}
\begin{equation*}
\mathbb{D}_{n,q}\cong D_{2^{k+1}\cdot q}\times\mathsf{C}_l.
\end{equation*}
\item Let $\gcd(m,6)=3$. Write $m=3^{k-1}l$ with $l$ odd, $\gcd(3,l)=1$ and $k\geq2$. Then $\gcd(8\cdot3^k,l)=1$ and\label{it:T.B}
\begin{equation*}
\mathbb{T}_m\cong P'_{8\cdot 3^k}\times\mathsf{C}_l.
\end{equation*}
\end{enumerate}
\end{theorem}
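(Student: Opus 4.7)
The plan is to exhibit explicit elements inside $\mathbb{D}_{n,q}$ and $\mathbb{T}_m$ that realise the direct product decomposition, using Riemenschneider's matrix generators together with a Bézout split of a cyclic subgroup. Both parts follow the same pattern: the generator carrying the extra scalar twist, $b=\tau\phi_{4m}$ (in (I)) or $c=\eta\phi_{6m}$ (in (II)), generates a cyclic subgroup of order $4m=2^{k+1}l$ (resp.\ $3m=3^{k}l$); because $\phi$ is central, conjugation by any power of $b$ (resp.\ $c$) on the remaining generators coincides with conjugation by the corresponding power of $\tau$ (resp.\ $\eta$), and hence has order dividing $2$ (resp.\ $3$). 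Writing $\alpha l+\beta p=1$ with $p=2^{k+1}$ (resp.\ $p=3^{k}$), the element $b^{\beta p}$ (resp.\ $c^{\beta p}$) has order $l$ and, since $p$ is divisible by $2$ (resp.\ $3$), acts trivially on the other generators, producing the central $\mathsf{C}_l$ factor. Meanwhile $b^{\alpha l}$ (resp.\ $c^{\alpha l}$) has order $p$ and inherits the non-trivial conjugation action because $\alpha$ is forced coprime to $p$.

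For part (I), a direct matrix computation gives $bab^{-1}=a^{-1}$ and $a^{q}=b^{2m}=-I$ where $a=\psi_{2q}$ (note that $\gcd(n,q)=1$ together with $m$ even forces $q$ odd). The coprimality $\gcd(2^{k+1}q,l)=1$ is immediate from $l$ odd and $\gcd(l,q)=1$ (using $l\mid m$). Setting $X=b^{\alpha l}$, $Y=a^{2}$, $Z=b^{\beta 2^{k+1}}$, one verifies $X^{2^{k+1}}=Y^{q}=1$, $XYX^{-1}=Y^{-1}$ (using $\alpha l$ odd), and that $Z$ centralises $\langle X,Y\rangle$. Since $\gcd(2^{k+1},q)=1$, the subgroup $\langle X,Y\rangle$ is a semidirect product of order $2^{k+1}q$, matching the presentation of $D_{2^{k+1}q}$ from Theorem~\ref{thm:fsgfi}; coprimality of $l$ with $2^{k+1}q$ gives $\langle X,Y\rangle\cap\langle Z\rangle=1$, and the order count $2^{k+1}ql=4qm=|\mathbb{D}_{n,q}|$ yields $\mathbb{D}_{n,q}\cong D_{2^{k+1}q}\times\mathsf{C}_l$.

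For part (II), the coprimality $\gcd(8\cdot 3^{k},l)=1$ follows from $l$ odd and $\gcd(3,l)=1$. The key input is the pair of matrix identities
\[
\eta\,\psi_{4}\,\eta^{-1}=\tau,\qquad \eta\,\tau\,\eta^{-1}=\psi_{4}\tau,
\]
to be verified by direct multiplication in $\SU$; these are exactly the $P'_{8\cdot 3^{k}}$ relations $zxz^{-1}=y$, $zyz^{-1}=xy$ with $x=\psi_{4}$, $y=\tau$. A short calculation using $\eta^{3}=-I$ and $m$ odd shows that $c$ has order $3m=3^{k}l$. Setting $X=\psi_{4}$, $Y=\tau$, $Z=c^{\alpha l}$, $W=c^{\beta 3^{k}}$ gives $X^{2}=Y^{2}=(XY)^{2}=-I$, $ZXZ^{-1}=Y$, $ZYZ^{-1}=XY$, $Z^{3^{k}}=1$, with $W$ central of order $l$. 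Since $\langle X,Y\rangle=\langle\psi_{4},\tau\rangle$ is the quaternion group of order $8$ coprime to $3^{k}$, the subgroup $\langle X,Y,Z\rangle$ realises the $P'_{8\cdot 3^{k}}$ presentation, and the order count $8\cdot 3^{k}\cdot l=24m=|\mathbb{T}_m|$ completes the decomposition.

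The main obstacle is the explicit verification in part (II) that $\eta$ acts as the prescribed $3$-cycle on the non-central elements of $\langle\psi_{4},\tau\rangle$; this is a somewhat delicate matrix computation over $\Z[\zeta_{8}]$ using the exact form of $\eta$ from Riemenschneider's list. Once this identity is in hand, the rest of both arguments is routine bookkeeping with cyclic groups of coprime order, taking advantage of the scalar nature of $\phi_{4m}$ and $\phi_{6m}$.
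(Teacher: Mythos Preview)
Your argument is correct and follows essentially the same strategy as the paper: exhibit explicit elements in the matrix group satisfying the presentation of $D_{2^{k+1}q}\times\mathsf{C}_l$ (resp.\ $P'_{8\cdot 3^k}\times\mathsf{C}_l$), then conclude by an order count. The paper phrases this as defining a homomorphism from the abstract presentation into $\mathbb{D}_{n,q}$ (resp.\ $\mathbb{T}_m$), sending $x,y,z$ (and $w$) to specific powers of $\tau\phi_{4m}$, $\psi_q$ (resp.\ $\psi_4$, $\tau$, $\eta\phi_{6m}$); your ``inside-out'' formulation is equivalent.

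Two points are worth recording. First, your systematic use of a B\'ezout relation $\alpha l+\beta p=1$ to split the cyclic subgroup $\langle\tau\phi_{4m}\rangle$ (resp.\ $\langle\eta\phi_{6m}\rangle$) is slightly cleaner than the paper's version: in part~(II) the paper sends $z\mapsto(\eta\phi_{6m})^{2l}$ and must then split into two cases according to $l\bmod 3$ (swapping the roles of $\psi_4$ and $\tau^{-1}$), whereas your choice $Z=c^{\alpha l}$ automatically satisfies $\alpha l\equiv 1\pmod{3}$ and so realises the correct $3$-cycle without a case distinction. Second, your claim $\eta^{3}=-I$ is correct (one checks $\mathrm{tr}(\eta)=1$, so the eigenvalues are primitive sixth roots of unity); the paper states that $\eta$ has order~$3$, which is a slip, though it does not affect the validity of the paper's argument since the orders of $\Phi(z)$ and $\Phi(w)$ come out the same either way.
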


\begin{proof}
\eqref{it:T.A} Let $\mathbb{I}$ denote the $2\times 2$ identity matrix.
Since $\gcd(n,q)=1$ and $m=n-q\equiv 0\mod 2$, both $n$ and $q$ must be odd.
We claim that $\gcd(2^{k+1}q,l)=1$, suppose $t$ divides both $2^{k+1}q$ and $l$, since $l$ is odd, $t$ must be odd as well, therefore $t$ does not divide $2^{k+1}$ and must divide $q$. We have that $n=2^{k+1}l+q$ and thus $t$ divides $n$, but since $\gcd(n,q)=1$ we have $t=1$.
Since $\phi_{4m}$ is a diagonal matrix we have the commutators
\begin{equation}\label{eq:rel.mat}
	[\phi_{4m},\psi_{2q}]=1,\quad [\phi_{4m},\tau]=1,
\end{equation}
and since $\tau^{-1}=-\tau$ it is easy to see that
\begin{equation}\label{eq:tau.psi}
\tau\psi_{2q}\tau^{-1}=\tau^{-1}\psi_{2q}\tau=\psi_{2q}^{-1}.
\end{equation}
Since $\tau$ and $\phi_{4m}$ commute, $\tau$ has order $4$ and $m$ is even, $\tau\phi_{4m}$ has order $4m=2^{k+1}l$ and $(\tau\phi_{4m})^{2m}=-\mathbb{I}$:
\begin{equation}\label{eq:tau.phi.2m}
 (\tau\phi_{4m})^{2^k\cdot l}=(\tau\phi_{4m})^{2m}=\tau^{2m}\phi_{4m}^{2m}=(\tau^2)^m(-\mathbb{I})=(-\mathbb{I})^m(-\mathbb{I})=(-\mathbb{I})^{m+1}=-\mathbb{I}.
\end{equation}
We also have that $\psi_{2q}=\psi_{q}^{\frac{q+1}{2}}(\tau\phi_{4m})^{2m}$, hence
\begin{equation*}
\mathbb{D}_{n,q}=\langle\psi_{2q},\tau\phi_{4m}\rangle=\langle\psi_{q},\tau\phi_{4m}\rangle=\{\psi_{q}^\beta(\tau\phi_{4m})^\gamma\mid \beta=1,\dots,q; \gamma=1,\dots,4m\}.
\end{equation*}
By \eqref{it:ffi.Dmk} we have the presentation of the direct product
\begin{equation}\label{eq:prod.pres}
D_{2^{k+1}\cdot q}\times\mathsf{C}_l=\langle x,y,z\mid x^{2^{k+1}}=1, y^{q}=1, xyx^{-1}=y^{-1}, z^l=1, z\rightleftarrows xy\rangle
\end{equation}
where $z\rightleftarrows xy$ means that $z$ commutes with $x$ and $y$. The isomorphism is given by
\begin{gather*}
	\Psi\colon D_{2^{k+1}\cdot q}\times\mathsf{C}_l\to \mathbb{D}_{n,q}\\
	x\mapsto (\tau\phi_{2^{k+1}l})^l,\quad y\mapsto \psi_{q},\quad z\mapsto (\tau\phi_{2^{k+1}l})^{2^{k+1}},
\end{gather*}
which is well-defined since  the images $\Psi(x)$, $\Psi(y)$ and $\Psi(z)$ satisfy the relations of $D_{2^{k+1}\cdot q}\times\mathsf{C}_l$ given in \eqref{eq:prod.pres}:
by \eqref{eq:tau.phi.2m} $\Psi(x)=(\tau\phi_{2^{k+1}l})^l$ has order $2^{k+1}$ and $\Psi(z)=(\tau\phi_{2^{k+1}l})^{2^{k+1}}$ has order $l$ and obviously they commute; $\Psi(y)=\psi_{q}$ has order $q$; by \eqref{eq:tau.psi} $\tau\psi_{q}\tau^{-1}=\tau\psi_{2q}^2\tau^{-1}=\psi_{2q}^{-2}=\psi_{q}^{-1}$ and $\tau\psi_{q}^{-1}\tau^{-1}=\psi_{q}$ and  by \eqref{eq:rel.mat} and \eqref{eq:tau.psi} and the fact that $l$ is odd, we have
\begin{equation*}
(\tau\phi_{2^{k+1}l})^l\psi_{q}(\tau\phi_{2^{k+1}l})^{-l}=\tau^l\psi_{q}\tau^{-l}=\psi_{q}^{-1}.
\end{equation*}
Finally, since $2^{k+1}$ is even
\begin{equation*}
 (\tau\phi_{2^{k+1}l})^{2^{k+1}}\psi_{q}(\tau\phi_{2^{k+1}l})^{-2^{k+1}}=\tau^{2^{k+1}}\psi_{q}\tau^{-2^{k+1}}=\psi_{q},
\end{equation*}
which means that $\Psi(z)=(\tau\phi_{2^{k+1}l})^{2^{k+1}}$ and $\Psi(y)=\psi_{q}$ commute.

\eqref{it:T.B} We claim that $\gcd(8\cdot3^k,l)=1$, suppose $t$ divides both $3^k\cdot8$ and $l$, since $l$ is odd, $t$ does not divide $8$ and must divide $3^k$, but $\gcd(3,l)=1$ so $t=1$.
Since $\phi_{6m}$ is a diagonal matrix we have the commutators
\begin{equation}\label{eq:comm.T}
 [\phi_{6m},\eta]=1,\quad [\phi_{6m},\psi_4]=1,\quad [\phi_{6m},\tau]=1.
\end{equation}
It is also straightforward to check that
\begin{equation}\label{eq:conj.T}
 \eta\psi_4\eta^{-1}=\tau,\quad \eta\tau\eta^{-1}=\psi_4\tau=\left(\begin{smallmatrix}
                                                                    0 & -1\\ 1 & 0
                                                                   \end{smallmatrix}\right),
\end{equation}
and since $\tau^{-1}=-\tau$, $\psi_4^{-1}=-\psi_4$ and $\psi_4\tau=-\tau\psi_4$, it is easy to see that
\begin{equation}\label{eq:conj.T2}
 \eta^2\tau^{-1}\eta^{-2}=\psi_4^{-1},\quad \eta^2\psi_4^{-1}\eta^{-2}=\tau^{-1}\psi_4^{-1}.
\end{equation}
Since $\eta$ and $\phi_{6m}$ commute and $\eta$ has order $3$, $\eta\phi_{6m}$ has order $6m=2\cdot 3^k\cdot l$ and $(\eta\phi_{6m})^{3m}=-\mathbb{I}$:
\begin{equation}\label{eq:ele.ord2}
(\eta\phi_{6m})^{3^k\cdot l}=(\eta\phi_{6m})^{3m}=\eta^{3m}\phi_{6m}^{3m}=-\mathbb{I}.
\end{equation}
We also have that $\eta\phi_{6m}=\tau^2(\eta\phi_{6m})^{2\cdot\frac{3m+1}{2}}$, hence
\begin{equation*}
\begin{split}
 \mathbb{T}_m&=\langle\psi_4,\tau,\eta\phi_{6m}\rangle=\langle\psi_4,\tau,(\eta\phi_{6m})^2\rangle\\
 &=\{\psi_4^\alpha\tau^\beta(\eta\phi_{6m})^{2\gamma}\mid \alpha=0,1; \beta=0,\dots,3;\gamma=0,\dots,3m-1\}.
\end{split}
\end{equation*}
By \eqref{it:ffi.Pk} we have the presentation of the direct product
\begin{multline}\label{eq:prod.pres.T}
P'_{8\cdot 3^k}\times\mathsf{C}_l=\\\langle x,y,z,w\mid x^2=(xy)^2=y^2,zxz^{-1}=y, zyz^{-1}=xy, z^{3^k}=1, w^l=1, w\rightleftarrows xyz\rangle
\end{multline}
where $w\rightleftarrows xyz$ means that $w$ commutes with $x$, $y$ and $z$.
Since $\gcd(3,l)=1$ we have two cases.

\paragraph{\textbf{Case 1:} $l\equiv 2\mod 3$} In this case the isomorphism is given by
\begin{gather*}
\Phi\colon P'_{8\cdot 3^k}\times\mathsf{C}_l\to \mathbb{T}_m\\
x\mapsto\psi_4,\quad y\mapsto\tau,\quad z\mapsto(\eta\phi_{6m})^{2l},\quad w\mapsto(\eta\phi_{6m})^{2\cdot 3^k},
\end{gather*}
which is well-defined since  the images $\Phi(x)$, $\Phi(y)$, $\Phi(z)$ and $\Phi(w)$ satisfy the relations of $P'_{8\cdot 3^k}\times\mathsf{C}_l$ given in \eqref{eq:prod.pres.T}:
by the definitions of $\psi_4$ and $\tau$ and by \eqref{eq:conj.T} we have
\begin{equation*}
 \psi_4^2=\left(\begin{smallmatrix}
                                                                    0 & -1\\ 1 & 0
                                                                   \end{smallmatrix}\right)^2=\tau^2=-\mathbb{I}.
\end{equation*}
By \eqref{eq:ele.ord2} $\Phi(z)=(\eta\phi_{6m})^{2l}$ has order $3^k$ and $\Phi(w)=(\eta\phi_{6m})^{2\cdot 3^k}$ has order $l$ and obviously they commute.
By \eqref{eq:comm.T} and since $\eta$ has order $3$ we also have that
\begin{align*}
(\eta\phi_{6m})^{2\cdot 3^k} \psi_4(\eta\phi_{6m})^{-2\cdot 3^k}&=\eta^{2\cdot 3^k}\psi_4\eta^{-2\cdot 3^k}=\psi_4,\\
(\eta\phi_{6m})^{2\cdot 3^k} \tau(\eta\phi_{6m})^{-2\cdot 3^k}&=\eta^{2\cdot 3^k}\tau\eta^{-2\cdot 3^k}=\tau.
\end{align*}
Since $l\equiv 2\mod 3$, then $2l\equiv 1\mod 3$ and by \eqref{eq:comm.T} and \eqref{eq:conj.T}
\begin{align*}
 (\eta\phi_{6m})^{2l}\psi_4(\eta\phi_{6m})^{-2l}&=\eta^{2l}\psi_4\eta^{-2l}=\eta\psi_4\eta^{-1}=\tau,\\
 (\eta\phi_{6m})^{2l}\tau(\eta\phi_{6m})^{-2l}&=\eta^{2l}\tau\eta^{-2l}=\eta\tau\eta^{-1}=\psi_4\tau.
\end{align*}

\paragraph{\textbf{Case 2:} $l\equiv 1\mod 3$} In this case the isomorphism is given by
\begin{gather*}
\Phi\colon P'_{8\cdot 3^k}\times\mathsf{C}_l\to \mathbb{T}_m\\
x\mapsto\tau^{-1},\quad y\mapsto\psi_4^{-1},\quad z\mapsto(\eta\phi_{6m})^{2l},\quad w\mapsto(\eta\phi_{6m})^{2\cdot 3^k},
\end{gather*}
which is well-defined since  the images $\Phi(x)$, $\Phi(y)$, $\Phi(z)$ and $\Phi(w)$ satisfy the relations of $P'_{8\cdot 3^k}\times\mathsf{C}_l$ given in \eqref{eq:prod.pres.T}
in an analogous way to Case~1.
\end{proof}

\begin{corollary}\label{cor:FI=SF}
The non-trivial finite subgroups of $\SO{4}$ acting freely and isometrically on $\mathbb{S}^3$ coincide with the small finite subgroups of $\U$.
\end{corollary}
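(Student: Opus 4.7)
The plan is to prove the corollary purely by matching up, family by family, the list in Theorem~\ref{thm:fsgfi} with the Riemenschneider list in Theorem~\ref{thm:fssu2}; all the nontrivial content has already been packaged in Remark~\ref{rem:rpo} and Theorem~\ref{thm:D.T}, so the corollary should reduce to a short bookkeeping argument.

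First I would dispose of the cases handled by Remark~\ref{rem:rpo}. The cyclic groups match trivially, since the cyclic subgroups $\mathbb{C}_{n,q}$ of $\U$ exhaust, up to conjugacy, all cyclic subgroups of $\SO{4}$ acting freely on $\mathbb{S}^3$. For the remaining four "pure" families of Theorem~\ref{thm:fsgfi}, namely $Q_{4t}\cong\mathsf{BD}_{2t}$, $P_{24}\cong\mathsf{BT}$, $P_{48}\cong\mathsf{BO}$ and $P_{120}\cong\mathsf{BI}$, together with their direct products with a cyclic group of relatively prime order, Remark~\ref{rem:rpo} shows that the parity and divisibility conditions on $m$ appearing in the definitions of $\mathbb{D}_{n,q}$ (with $\gcd(n-q,2)=1$), $\mathbb{T}_m$ (with $\gcd(m,6)=1$), $\mathbb{O}_m$ and $\mathbb{I}_m$ are exactly what is needed to invoke \eqref{eq:prod} and rewrite these Riemenschneider groups as binary polyhedral groups $\times\,\mathsf{C}_m$ with $\gcd(m,|\mathsf{BP}|)=1$. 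Hence these families on the two lists are identified.

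Next I would invoke Theorem~\ref{thm:D.T} to handle the two genuinely extra families. Part~\eqref{it:T.A} of that theorem identifies the dihedral Riemenschneider groups $\mathbb{D}_{n,q}$ with $\gcd(n-q,2)=2$ with the groups $D_{2^{k+1}(2r+1)}\times\mathsf{C}_l$ of family~\eqref{it:ffi.Dmk} of Theorem~\ref{thm:fsgfi}, while Part~\eqref{it:T.B} identifies the tetrahedral Riemenschneider groups $\mathbb{T}_m$ with $\gcd(m,6)=3$ with the groups $P'_{8\cdot 3^k}\times\mathsf{C}_l$ of family~\eqref{it:ffi.Pk}. In each case the coprimality conditions $\gcd(2^{k+1}q,l)=1$ and $\gcd(8\cdot 3^k,l)=1$ are precisely those required in condition~\eqref{it:Gammaxcyc} of Theorem~\ref{thm:fsgfi}, so the two enumerations match.

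Finally I would observe that the pairing just described is exhaustive on both sides and non-redundant: every non-trivial group in Theorem~\ref{thm:fsgfi} has been identified, up to isomorphism, with a group in Theorem~\ref{thm:fssu2}, and conversely every group in Theorem~\ref{thm:fssu2} has been accounted for. Since all finite subgroups of $\GLn{2}$ are conjugate into $\U$ and the list of Theorem~\ref{thm:fssu2} is the full conjugacy list of small finite subgroups of $\U$, this concludes the proof. The main obstacle was already absorbed into Theorem~\ref{thm:D.T}; after that result, the corollary is immediate.
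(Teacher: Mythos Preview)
Your proposal is correct and follows exactly the approach the paper takes: the paper states Corollary~\ref{cor:FI=SF} without a separate proof because the matching is already carried out in the paragraph preceding Theorem~\ref{thm:D.T} (invoking Remark~\ref{rem:rpo} for families \eqref{it:ffi.triv}--\eqref{it:ffi.P120}) together with Theorem~\ref{thm:D.T} itself (for families \eqref{it:ffi.Dmk} and \eqref{it:ffi.Pk}). Your write-up simply makes this bookkeeping explicit.
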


Corollary~\ref{cor:FI=SF} has Theorem~\ref{Thm1} as a corollary.

\section{Irreducible representations of small finite subgroups of $\GLn{2}$}\label{sec:irr.rep}

Let $\Gamma$ be a finite cyclic group or a group of the families \eqref{it:ffi.Q}-\eqref{it:ffi.Pk} in Theorem~\ref{thm:fsgfi}.
In this section we are interested on the irreducible representations of $\Gamma$. 
Recall that the \textit{character} $\chi_\rho$ of a representation $\rho\colon\Gamma\to\GLn{n}$ is the function $\chi_\rho\colon\Gamma\to\C$ given by $\chi_\rho(g)=\mathrm{Tr}(\rho(g))$,
where $\mathrm{Tr}(\rho(g))$ is the trace of $\rho(g)$.

As usual, denote by $[\Gamma,\Gamma]$ the \textit{commutator subgroup} of $\Gamma$. Let $\Ab\colon\Gamma\to\Gamma/[\Gamma,\Gamma]$
be the projection homomorphism, and denote the \textit{abelianization} of $\Gamma$ by $\Ab(\Gamma)=\Gamma/[\Gamma,\Gamma]$.
Let $\rho\colon\Gamma\to\GLn{1}$ be a one-dimensional representation. Since $\GLn{1}$ is abelian, by the universal property of the abelianization, the representation $\rho$ factorizes as
\begin{equation}\label{eq:rho.Ab}
\xymatrix{
\Gamma\ar[r]^{\rho}\ar[d]_{\Ab} & \GLn{1}\\
\Ab(\Gamma)\ar[ur]_{\rho_{\Ab}}
}
\end{equation}

Table~\ref{tb:AbG} shows the abelianizations of $\Gamma$ in families \eqref{it:ffi.Q}-\eqref{it:ffi.P120}  in Theorem~\ref{thm:fsgfi}
(binary polyhedral groups) but given by presentation \eqref{eq:2st} (Table~\ref{tbl:BPG}), where $\bar{b}=\Ab(b)$ and $\bar{c}=\Ab(c)$ (see \cite[II\S5-Table~2]{Lamotke:RSIS}).
\begin{table}[h]
\setlength{\extrarowheight}{4pt}
\begin{tabular}{|l|c|l|}\cline{1-3}
$\Gamma< \SU$ & $\mathrm{Ab}(\Gamma)$& Generators\\\hline
$\mathsf{BD}_{2t}$ ($t$ even)&  $\mathsf{C}_2\times\mathsf{C}_2$& generated by $\bar{b}$ and $\bar{c}$ \\\hline
$\mathsf{BD}_{2t}$ ($t$ odd) &  $\mathsf{C}_4$& generated by $\bar{b}$ and $\bar{c}=\bar{b}^2$ \\\hline
$\mathsf{BT}$ & $\mathsf{C}_3$& generated by $\bar{c}$ and $\bar{b}=\bar{c}^{-1}$ \\\hline
$\mathsf{BO}$ & $\mathsf{C}_2$& generated by $\bar{c}$ and $\bar{b}=1$\\\hline
$\mathsf{BI}$ & $\{1\}$&\\
\hline
\end{tabular}
\caption{Abelianizations of binary polyhedral groups.}\label{tb:AbG}
\end{table}

The next lemma gives the abelianization of families \eqref{it:ffi.Dmk} and \eqref{it:ffi.Pk} in Theorem~\ref{thm:fsgfi}.

\begin{lemma}\label{lem:Ab}
Consider the groups $D_{2^{k+1}(2r+1)}$ and $P'_{8\cdot 3^k}$ with the presentations given, respectively, in \eqref{it:ffi.Dmk} and \eqref{it:ffi.Pk} in Theorem~\ref{thm:fsgfi}. We have that
\begin{enumerate}[(i)]
 \item $\Ab(D_{2^{k+1}(2r+1)})=\mathsf{C}_{2^{k+1}}$, $\Ab(x)$ is the generator of $\mathsf{C}_{2^{k+1}}$ and $\Ab(y)=1$.\label{it.Ab.D}
 \item $\Ab(P'_{8\cdot 3^k})=\mathsf{C}_{3^k}$, $\Ab(z)$ is the generator of $\mathsf{C}_{3^k}$ and $\Ab(x)=\Ab(y)=1$.\label{it:Ab.P}
\end{enumerate}
\end{lemma}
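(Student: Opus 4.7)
The plan is to compute each abelianization directly from the defining relations, using the fact that in $\Ab(\Gamma)$ every commutator becomes trivial, so conjugation relations turn into equalities of images.

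For part \eqref{it.Ab.D}, I would start from the presentation
\begin{equation*}
D_{2^{k+1}(2r+1)}=\langle x,y\mid x^{2^{k+1}}=1,\ y^{2r+1}=1,\ xyx^{-1}=y^{-1}\rangle.
\end{equation*}
Passing to $\Ab$, the conjugation relation $xyx^{-1}=y^{-1}$ becomes $\bar{y}=\bar{y}^{-1}$, i.e.\ $\bar{y}^{2}=1$. Combined with $\bar{y}^{2r+1}=1$, I get $\bar{y}=\bar{y}^{2r+1}\bar{y}^{-2r}=1$ since $\bar{y}^{2r}=(\bar{y}^{2})^{r}=1$. Therefore $\Ab(D_{2^{k+1}(2r+1)})$ is generated by $\bar{x}$ subject only to $\bar{x}^{2^{k+1}}=1$, giving $\mathsf{C}_{2^{k+1}}$.

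For part \eqref{it:Ab.P}, I would work with
\begin{equation*}
P'_{8\cdot 3^k}=\langle x,y,z\mid x^{2}=(xy)^{2}=y^{2},\ zxz^{-1}=y,\ zyz^{-1}=xy,\ z^{3^k}=1\rangle.
\end{equation*}
In $\Ab$, the conjugation relations give $\bar{x}=\bar{y}$ and $\bar{y}=\bar{x}\bar{y}$; the second equation forces $\bar{x}=1$, and then the first gives $\bar{y}=1$. The remaining relations $x^{2}=(xy)^{2}=y^{2}$ become $1=1=1$, which are trivially satisfied, and $z^{3^k}=1$ is preserved. Hence $\Ab(P'_{8\cdot 3^k})$ is the cyclic group $\mathsf{C}_{3^k}$ generated by $\bar{z}$.

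There is essentially no obstacle here: both statements follow by writing out what the defining relations say after imposing commutativity. The only point to be a little careful about is that, in part \eqref{it.Ab.D}, one must use the coprimality of $2$ with $2r+1$ to deduce $\bar{y}=1$ from $\bar{y}^{2}=1$ together with $\bar{y}^{2r+1}=1$, and in part \eqref{it:Ab.P} one should verify that the original relations $x^{2}=(xy)^{2}=y^{2}$ do not impose additional constraints on $\bar{z}$, which is immediate since those relations involve only $x$ and $y$.
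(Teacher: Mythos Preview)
Your proof is correct and follows essentially the same approach as the paper's: both arguments impose commutativity on the given presentations and read off the consequences of the conjugation relations to kill $\bar{y}$ (respectively $\bar{x}$ and $\bar{y}$), leaving a single cyclic generator. The only cosmetic difference is in part~\eqref{it:Ab.P}, where the paper phrases the deduction as $x=y^{-1}$ and $y=xy$ giving $y=y^{-1}y=1$, while you write $\bar{x}=\bar{y}$ and $\bar{y}=\bar{x}\bar{y}$ giving $\bar{x}=1$; these are equivalent manipulations.
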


\begin{proof}
\eqref{it.Ab.D} If the generators commute, from the third relation in \eqref{it:ffi.Dmk} we get $y^2=1$ and with $y^{2r+1}=1$ implies that $y=1$. The first relation remains the same.

\eqref{it:Ab.P} If the generators commute, from the second and third relations in \eqref{it:ffi.Pk} we have $x=y^{-1}$ and $y=xy$, from which we get $y=y^{-1}y=1$, and thus
 $x=y^{-1}=1$. With $x=y=1$ the first relation is trivially satisfied $x^2=x^2y^2=y^2$. The last relation remains the same.
\end{proof}

\begin{remark}\label{rem:Ab.G}
Since the abelianization of a direct product of groups is the direct product of the abelianizations,
by Table~\ref{tb:AbG} and Lemma~\ref{lem:Ab} the abelianization $\Ab(\Gamma)$ of the groups $\Gamma$ listed in Theorem~\ref{thm:fsgfi} are as follows:
\begin{enumerate}[(A)]
 \item $\Ab(\Gamma)$  is a finite cyclic group, if $\Gamma$ is $\mathsf{BD}_{2t}$ with $t$ odd, $\mathsf{BT}$, $\mathsf{BO}$, $\mathsf{BI}$, $D_{2^{k+1}(2r+1)}$ or $P'_{8\cdot 3^k}$.\label{it:cyc}
 \item $\Ab(\Gamma)$  is the direct product of two finite cyclic groups, if $\Gamma$ is $\mathsf{BD}_{2t}$ with $t$ even,
 or the direct product of a group in \eqref{it:cyc} with a finite cyclic group of relatively prime order.
 \item $\Ab(\Gamma)$  is the direct product of three finite cyclic groups, if $\Gamma$ is the direct product of $\mathsf{BD}_{2t}$ with $t$ even with a finite cyclic group of relatively prime order.
\end{enumerate}
\end{remark}

\subsection{The groups $\mathsf{BT}$, $\mathsf{BO}$ and $\mathsf{BI}$}\label{ssec:reps.binary}

If $\Gamma$ is a finite subgroup of $\SU$ (Table~\ref{tbl:BPG}), its irreducible representions are given in \cite{arciniegaetal:CCSC} where the presentation \eqref{eq:2st} is used.

\paragraph{\textbf{The group $\mathsf{BT}$}} It has $7$ irreducible representations, denoted by $\alpha_j$, $j=1,\dots,7$,
three $1$-dimensional, three $2$-dimensional and one $3$-dimensional.

\paragraph{\textbf{The group $\mathsf{BO}$}} It has $8$ irreducible representations, denoted by $\alpha_j$, $j=1,\dots,8$,
two $1$-dimensional, three $2$-dimensional, two $3$-dimensional and one $4$-dimensional.

\paragraph{\textbf{The group $\mathsf{BI}$}} It has $9$ irreducible representations, denoted by $\alpha_j$, $j=1,\dots,9$,
One $1$-dimensional, two $2$-dimensional, two $3$-dimensional, two $4$-dimensional, one $5$-dimensional and one $6$-dimensional.

\subsection{The group $\mathsf{BD}_{2q}$}\label{ssec:BD2q}

It has $q+3$ irreducible representations. It has $4$ one-dimensional representations, denoted by $\alpha_j$, $0\leq j\leq 3$, they correspond to the irreducible representations of its abelianization.
From Table~\ref{tb:AbG}, for $q$ even we have that $\Ab(\mathsf{BD}_{2q})\cong\mathsf{C}_2\times\mathsf{C}_2$  and the four one-dimensional irreducible representations correspond to the tensor product of the irreducible representations of $\mathsf{C}_2$ with themselves.
For $q$ odd $\Ab(\mathsf{BD}_{2q})\cong\mathsf{C}_4$, in this case the four one-dimensional irreducible representations correspond to the irreducible representations of $\mathsf{C}_4$.
 \begin{align*}
\text{$r$ \textbf{even:}}\qquad\alpha_0(b)&=\alpha_0(c)=1, &\alpha_1(b)&=-1,\alpha_1(c)=1,\\
\alpha_2(b)&=1, \alpha_2(c)=-1,& \alpha_3(b)&=\alpha_3(c)=-1.\\[15pt]
\text{$r$ \textbf{odd:}}\qquad\alpha_0(b)&=\alpha_0(c)=1, &\alpha_1(b)&=i, \alpha_1(c)=-1,\\
\alpha_2(b)&=-1,\alpha_2(c)=1,& \alpha_3(b)&=-i,\phi_3(c)=-1.
\end{align*}
It also has $q-1$ two-dimensional representations, denoted by $\rho_t$, $1\leq t\leq q-1$, given for any $r$ by:
\begin{equation}\label{eq:2d.rep.BD}
\rho_t(b)=\left(\begin{smallmatrix}0&1\\(-1)^t&0\end{smallmatrix}\right),\qquad \rho_t(c)=\left(\begin{smallmatrix}\zeta_{2q}^{t}&0\\0&\zeta_{2q}^{-t}\end{smallmatrix}\right),
\end{equation}
where $\rho_1$ is the natural representation.

\subsection{The group $D_{2^{k+1}(2r+1)}$}\label{ssec:D2kq}
It has $2^{k}(r+2)$ irreducible representations\footnote{For $k=1$, $D_{4(2r+1)}\cong\mathsf{BD}_{2(2r+1)}$. In this case $q=2r+1$ and $q+3=2r+4=2(r+2).$}.
It has $2^{k+1}$ one-dimensional representations, denoted by $\alpha_j$, $0\leq j\leq 2^{k+1}-1$, they correspond to the irreducible representations of its abelianization, by Lemma~\ref{lem:Ab} $\Ab(D_{2^{k+1}(2r+1)})=\mathsf{C}_{2^{k+1}}$ and they are given by
\begin{equation}\label{eq:D.1dim}
\alpha_j(x)=\zeta_{2^{k+1}}^{j},\quad \alpha_j(y)=1.
\end{equation}
Consider the two-dimensional representations
\begin{equation}\label{eq:D2kq.2dir}
\varrho_{t,s}(x)=\zeta_{2^{k+1}}^s\left(\begin{smallmatrix}0&1\\(-1)^t&0\end{smallmatrix}\right),\qquad \varrho_{t,s}(y)=\left(\begin{smallmatrix}\zeta_{2r+1}^{t}&0\\0&\zeta_{2r+1}^{-t}\end{smallmatrix}\right).
\end{equation}
It is straightforward to check that $\varrho_{t,s}$ satisfies the relations in \eqref{it:ffi.Dmk} so it is indeed a representation of $D_{2^{k+1}(2r+1)}$.
An easy computation shows that the inner product of the character of the representation $\varrho_{t,s}$ with itself is equal to $1$, which shows that $\varrho_{t,s}$ is irreducible.
Naturally we have that $t=1,\dots,2r$ and $s=0,\dots,2^{k+1}-1$, but comparing characters one can check that
\begin{equation}\label{eq:D2kq.2d=}
\varrho_{t,s}=\varrho_{t,2^{k}+s},\quad\text{and}\quad \varrho_{2r+1-t,s}=\varrho_{t,2^{k-1}+s},
\end{equation}
therefore the list of different two-dimensional representations is $\varrho_{t,s}$ with $t=1,\dots,2r$ and $s=0,\dots,2^{k-1}-1$
(or equivalently $t=1,\dots,r$ and $s=0,\dots,2^{k}-1$), so they are $2^{k}r$ of them.
They are all different since they have different characters.
This is the complete list of irreducible representations of $D_{2^{k+1}(2k+1)}$.
We have listed $2^{k+1}$ one-dimensional representations and $2^{k}r$ two-dimensional representations. The sum of the squares of their dimensions is
\begin{equation*}
    2^{k+1} + 2^2 2^{k}r = 2^{k+1} + 2^{k+2}r=2^{k+1}(2r+1),
\end{equation*}
which is the order of $D_{2^{k+1}(2k+1)}$. By~\cite[Chapter~9, Theorem~5.9]{zbMATH00425998}, there can not exist any other irreducible representation.

\begin{remark}\label{rem:k=2}
When $k=1$ we have $s=0$ in $\varrho_{t,s}$ and we recover the $2$-dimensional irreducible representations of $\mathsf{BD}_{2(2r+1)}$ given in \cite{arciniegaetal:CCSC} via the isomorphism given in \eqref{eq:Dmk.BDk}.
\end{remark}

\subsection{The group $P'_{8\cdot 3^k}$}\label{ssec:Pp83k}

It has $7\cdot 3^{k-1}$ irreducible representations\footnote{For $k=1$ this agrees with the fact that $P'_{24}\cong\mathsf{BT}$.}.
It has $3^k$ one-dimensional representations, denoted by $\alpha_j$, $0\leq j\leq 3^k-1$, they correspond to the irreducible representations of its abelianization, by Lemma~\ref{lem:Ab} $\Ab(P'_{8\cdot 3^k})=\mathsf{C}_{3^k}$ and they are given by
\begin{equation*}
\alpha_j(z)=\zeta_{3^k}^{j},\quad \alpha_j(x)=\alpha_j(y)=1.
\end{equation*}
It also has $3^k$ two-dimensional representations, denoted by $\varrho_{s}$ with $s=0,\dots,3^{k}-1$, given by
\begin{equation}\label{eq:2drPp83k}
\begin{aligned}
 \varrho_{s}(x)&=\left(\begin{smallmatrix}
                       0 & \zeta_3^2\\
                       -\zeta_3 & 0
                      \end{smallmatrix}\right), & \varrho_{s}(y)&=\left(\begin{smallmatrix}
                                                                \zeta_3^2 & 1\\
                                                                \zeta_3^2 & -\zeta_3^2
                                                               \end{smallmatrix}\right), & \varrho_{s}(z)&=\zeta_{3^k}^s\left(\begin{smallmatrix}
                                                                                                          0 & \zeta_3\\
                                                                                                          -\zeta_3^2 & -1
                                                                                                         \end{smallmatrix}\right).
\end{aligned}
\end{equation}
It has $3^{k-1}$ three-dimensional irreducible representations $\varsigma_s$, $s=0,\dots,3^{k-1}-1$ given by
\begin{equation}\label{eq:3drPp83k}
\begin{aligned}
\varsigma_s(x)&=\left(\begin{smallmatrix}
                     -1 & -1 & -1\\
                     0 & 0 & 1\\
                     0 & 1 & 0
                    \end{smallmatrix}\right), & \varsigma_s(y)&=\left(\begin{smallmatrix}
                                                                     0 & 0 & 1\\
                                                                     -1 & -1 & -1\\
                                                                     1 & 0 & 0
                                                                    \end{smallmatrix}\right), & \varsigma_s(z)&=\zeta_{3^k}^s\left(\begin{smallmatrix}
                                                                                                                     -1 & -1 & -1\\
                                                                                                                     0 & 1 & 0\\
                                                                                                                     1 & 0 & 0
                                                                                                                    \end{smallmatrix}\right).
\end{aligned}
\end{equation}
It is straightforward to check that $\varrho_{s}$ and $\varsigma_s$ satisfy the relations in \eqref{it:ffi.Pk} so they are indeed representations of $P'_{8\cdot 3^k}$.
An easy computation shows that the inner product of the characters of the representation $\varrho_{s}$ and $\varsigma_s$ with themselves is equal to $1$, which shows that they are irreducible. They are all different since they have different characters.
This is the complete list of irreducible representations of $P'_{8\cdot 3^k}$ since the sum of the square of their dimensions is
\begin{equation*}
 3^k+4(3^k)+9(3^{k-1})=3^k+4(3^k)+3(3^k)=8(3^k),
\end{equation*}
which is the order of $P'_{8\cdot 3^k}$. By~\cite[Chapter~9, Theorem~5.9]{zbMATH00425998}, there can not exist any other irreducible representation.

\begin{remark}
When $k=1$,  $\varrho_{0}$, $\varrho_{3^{k-1}}$ and $\varrho_{2\cdot3^{k-1}}$ are the $3$ two-dimensional 
irreducible representations of $\mathsf{BT}$ and $\varsigma_0$ is the three-dimensional irreducible representation of $\mathsf{BT}$.
They are equivalent to the irreducible representations given in \cite{arciniegaetal:CCSC} via the isomorphism given by the composition of the isomorphism $P'_{24}\cong P_{24}$ given in \eqref{eq:P24.P24} and the isomorphism $P_{24}\cong\mathsf{BT}$ given in Lemma~\ref{lem:P=BP}.
\end{remark}

\section{Chern and Cheeger-Chern-Simons classes and CCS-numbers of $\rho$}\label{sec:CCS}

Our next goal is to classify flat vector bundles over spherical $3$-manifolds. In this section we recall the definitions and some results on flat vector bundles and
their Chern and Cheeger-Chern-Simons classes. For details see \cite{arciniegaetal:CCSC} and its references.

\subsection{Flat vector bundles}

Let $G$ be a Lie group and denote by $G^d$ the group $G$ but with the discrete topology.
Let $\BG[G]$ be the classifying space of $G$. Note that the classifying space $\BG[G^d]$ is an Eilenberg-McLane space of type $K(G^d,1)$.
The continuous map $G^d \to G$ given by the identity induces a map between the classifying spaces $\iota \colon \BG[G^d] \to \BG[G]$.

Let $M$ be a compact smooth manifold.
There is a one-to-one correspondence
\begin{equation}\label{eq:Vect.class.map}
\text{Vect}_{n,\C}(M) \Longleftrightarrow [M, \BG[\GLn{n}]].
\end{equation}
between the set $\text{Vect}_{n,\C}(M)$, of isomorphism classes of complex vector bundles of rank $n$ over $M$,
and the set $[M, \BG[\GLn{n}]]$ of homotopy classes of maps from $M$ to the classifying space $\BG[\GLn{n}]$.
To every vector bundle $\xi$ of rank $n$ over $M$ corresponds the homotopy class of its \textit{classifying map} $f\colon M\to \BG[\GLn{n}]$,
such that $\xi$ is isomorphic to the pull-back by $f$ of the \textit{universal vector bundle} $\gamma^n$ of rank $n$ over $\BG[\GLn{n}]$, that is $\xi=f^*(\gamma^n)$.
There are concrete models for $\BG[\GLn{n}]$ and $\gamma^n$.
Let $\mathrm{G}_n(\C^K)$ be the \textit{Grassmannian} of $n$-planes in $\C^K$, then
$\BG[\GLn{n}]=\mathrm{G}_n=\mathrm{G}_n(\C^\infty)=\varinjlim \mathrm{G}_n(\C^K)$ and the \emph{universal vector bundle} is the canonical complex bundle $\gamma^n$ of rank $n$ over $\mathrm{G}_n$ \cite[Theorem~8-7.2]{Husemoller:Bundles}.

A vector bundle $\xi$ over $M$ is \textit{flat} if it admits a flat connection, that is, a connection with zero curvature (see \cite[\S3]{Labourie:LRSG} for definitions).
This is equivalent to the classifying map $f\colon M\to \BG[\GLn{n}]$ of $\xi$ factorizing through $\BG[G^d]$, that is
\begin{equation}\label{eq:flat.fact}
\xymatrix{
&\BG[\GLn{n}^d] \ar[d]^\iota \\
M \ar[r]^{f} \ar@{.>}[ru]^{\bar{f}} & \BG[\GLn{n}].
}
\end{equation}

In \cite[Theorem~1]{Narasimhan-Ramanan:EUC} Narasimhan and Ramanan proved that the
universal bundle $\gamma^n\to \mathrm{G}_n$ has a \textit{universal connection} $\nabla_\mathrm{univ}$. We denote it by $(\gamma^n,\BG[\GLn{n}],\nabla_\mathrm{univ})$.
The pull-back (with connection) of $\gamma^n$ by the map $\iota \colon \BG[\GLn{n}]^d \to \BG[\GLn{n}]$ is a \emph{universal flat vector bundle} with \textit{universal flat connection}. We denote it by $(\gamma^n_d,\BG[\GLn{n}]^d,\nabla_\mathrm{univ}^d)=(\iota^*\gamma^n,\BG[\GLn{n}]^d,\iota^*\nabla_\mathrm{univ})$.
Thus, any flat rank $n$ vector bundle $\xi$ over $M$ is the pull back of the universal flat vector bundle $\gamma^n_d$ by the map $\bar{f}$.
If we denote by $\text{Vect}_{n,\C}^{\mathrm{Flat}}(M)$ the set of isomorphism classes of flat rank $n$ complex vector bundles over $M$ we have
a one-to-one correspondence
\begin{equation}\label{eq:Flat.Rep}
 \text{Vect}_{n,\C}^{\mathrm{Flat}}(M) \Longleftrightarrow [M, \BG[\GLn{n}^d]] \Longleftrightarrow \Hom(\pi_1(M),\GLn{n}^d).
\end{equation}
In other words, we can classify flat vector bundles over $M$ using invariants of representations of the fundamental group of $M$ and \textit{vice versa}.

\subsection{Chern and Cheeger-Chern-Simons classes of a representation}\label{ssec:CCScl}

Let $M$ be a compact smooth manifold and $\rho\colon\pi_1(M)\to\GLn{n}^d$ a representation of its fundamental group. By \eqref{eq:Flat.Rep} to $\rho$ corresponds a flat vector bundle $V_\rho$ over $M$ which is given by $V_\rho=\tilde{M}\times_\rho\C^n\to M$, where $\tilde{M}$ is the universal cover of $M$ and $\tilde{M}\times_\rho\C^n$ is the quotient of $\tilde{M}\times\C^n$ by the action of $\pi_1(M)$,
given on the first factor by the canonical action of $\pi_1(M)$ on $\tilde{M}$ and via the representation $\rho$ on the second factor.

The $k$-th \textit{Chern class} of $\rho$ is the $k$-th Chern class $c_{\rho,k}$ of the flat vector bundle $V_\rho$
\begin{equation*}
 c_{\rho,k}=c_k(V_\rho)\in H^{2k}(M;\Z).
\end{equation*}

Cheeger and Simons \cite{Cheeger-Simons:DCGI} defined a group of \textit{differential characters} (\textit{differential cohomology} \cite{Bar-Becker:DiffChar}) $\widehat{H}^k(\BG[\GLn{n}];\C/\Z)$ containing the group $H^{k-1}(\BG[\GLn{n}];\C/\Z)$. Using the universal bundle with connection $(\gamma^n,\BG[\GLn{n}],\nabla_\mathrm{univ})$
they constructed in $\widehat{H}^k(\BG[\GLn{n}];\C/\Z)$ a \textit{universal Chern differential character} $\widehat{C}_k$.

Since the pull-back $(\gamma^n_d,\BG[\GLn{n}]^d,\nabla_\mathrm{univ}^d)$ of $(\gamma^n,\BG[\GLn{n}],\nabla_\mathrm{univ})$
by the map $\iota \colon \BG[\GLn{n}]^d \to \BG[\GLn{n}]$ is \textit{flat}, the pull-back $\widehat{c}_k$ of the universal Chern differential character $\widehat{C}_k$ by $\iota$ actually lies in
$H^{k-1}(\BG[\GLn{n}^d];\C/\Z)$, that is
\begin{equation}\label{eq:UCCSc}
 \widehat{c}_k=\iota^*(\widehat{C}_k)\in H^{2k-1}(\BG[\GLn{n}]^d;\C/\Z),
\end{equation}
this is the \emph{universal $k$-th Cheeger-Chern-Simons class for flat bundles}.
Let $\BG[\rho]\colon M \to \BG[\GLn{n}^d]$ be the map induced between classifying spaces by $\rho$ given by \eqref{eq:Flat.Rep}. The $k$-th \textit{Cheeger-Chern-Simon class} of $\rho$ is given by
\begin{equation*}
\widehat{c}_{\rho,k}=(B\rho)^*(\widehat{c}_k)\in H^{2k-1}(M;\C/\Z).
\end{equation*}
By the Universal Coefficient Theorem and using that $\C/\Z$ is divisible there is an isomorphism
\begin{equation}\label{eq:coh.mor}
H^{2k-1}(M;\C/\Z) \cong \Hom(H_{2k-1}(M;\Z),\C/\Z).
\end{equation}
Thus, the Cheeger-Chern-Simons classes $\widehat{c}_{\rho,k}$ can be identified as homomorphisms $\widehat{c}_{\rho,k} \colon H_{2k-1}(M;\Z) \to \C/\Z$.
Let $\kappa\in H_{2k-1}(M;\Z)$, we call the image $\widehat{c}_{\rho,k}(\kappa)$ the \textit{$k$-th CCS-number of $\rho$ with respect to $\kappa$}.

Cheeger-Chern-Simons classes have the following functorial property \cite[(8.3)]{Cheeger-Simons:DCGI}:
\begin{proposition}\label{prop:ccs-pullback}
Let $M$ and $N$ be compact smooth manifolds and $f\colon N\to M$ a smooth map.
Let $f_*\colon\pi_1(N)\to\pi_1(M)$ and $f^*\colon H^{2k-1}(M;\C/\Z)\to H^{2k-1}(M;\C/\Z)$ be the homomorphisms induced by $f$ on fundamental groups and cohomology respectively.
Let $\rho\colon\pi_1(M)\to\GLn{n}$ be a representation of the fundamental group of $M$ and consider the representation of the fundamental group of $N$ given by
$\rho\circ f_*\colon\pi_1(N)\to\GLn{n}$. Then
\begin{equation*}
 \widehat{c}_{\rho\circ f_*,k}=f^*(\widehat{c}_{\rho,k}).
\end{equation*}
Using isomorphism \eqref{eq:coh.mor}, if $\kappa\in H_{2k-1}(M;\Z)$ then
\begin{equation*}
\widehat{c}_{\rho\circ f_*,k}(\kappa)= \widehat{c}_{\rho,k}(\bar{f}_*(\kappa)),
\end{equation*}
where $\bar{f}_*\colon H_{2k-1}(N;\Z)\to H_{2k-1}(M;\Z)$ is the homomorphism induced by $f$.
\end{proposition}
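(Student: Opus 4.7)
The plan is to derive the functoriality of $\widehat{c}_{\rho,k}$ from the compatibility of classifying maps under composition, together with contravariance of cohomological pullback.

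First I would identify the two relevant classifying maps. By \eqref{eq:Flat.Rep}, the representation $\rho$ is classified by a map $\BG[\rho]\colon M\to\BG[\GLn{n}^d]$ and the representation $\rho\circ f_*$ by $\BG[\rho\circ f_*]\colon N\to\BG[\GLn{n}^d]$. The geometric input I need is the homotopy
\begin{equation*}
\BG[\rho\circ f_*]\simeq \BG[\rho]\circ f.
\end{equation*}
This follows from the asphericity of $\BG[\GLn{n}^d]$: since it is an Eilenberg--MacLane space $K(\GLn{n}^d,1)$, homotopy classes of maps into it from a CW complex $X$ are in bijection with conjugacy classes of homomorphisms $\pi_1(X)\to\GLn{n}^d$. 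The composite $\BG[\rho]\circ f$ induces $\rho\circ f_*$ on fundamental groups, so it represents the same homotopy class as $\BG[\rho\circ f_*]$. Equivalently, at the bundle level one checks directly that the flat bundle $V_{\rho\circ f_*}=\widetilde{N}\times_{\rho\circ f_*}\C^n$ is canonically isomorphic to $f^*V_\rho$ via the equivariant lift $\widetilde{N}\to\widetilde{M}$ of $f$.

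Granting this homotopy, naturality of pullback in cohomology gives
\begin{equation*}
\widehat{c}_{\rho\circ f_*,k}=(\BG[\rho\circ f_*])^*(\widehat{c}_k)=(\BG[\rho]\circ f)^*(\widehat{c}_k)=f^*\bigl((\BG[\rho])^*(\widehat{c}_k)\bigr)=f^*(\widehat{c}_{\rho,k}),
\end{equation*}
which proves the first assertion. For the evaluation statement, I would invoke naturality of the Universal Coefficient isomorphism \eqref{eq:coh.mor}: for any continuous $g\colon X\to Y$, the pullback $g^*$ on $H^{2k-1}(-;\C/\Z)$ corresponds under \eqref{eq:coh.mor} to precomposition with $g_*\colon H_{2k-1}(X;\Z)\to H_{2k-1}(Y;\Z)$. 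Applying this to $f$ and to the class $\widehat{c}_{\rho,k}$ gives, for $\kappa\in H_{2k-1}(N;\Z)$,
\begin{equation*}
\widehat{c}_{\rho\circ f_*,k}(\kappa)=f^*(\widehat{c}_{\rho,k})(\kappa)=\widehat{c}_{\rho,k}(\bar{f}_*(\kappa)).
\end{equation*}

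The only step with genuine content is the homotopy $\BG[\rho\circ f_*]\simeq \BG[\rho]\circ f$, and even this reduces to the standard representability of group cohomology by classifying spaces with discrete structure group. Once it is in hand, the proposition is two applications of contravariant functoriality plus the well-known naturality of the Universal Coefficient isomorphism.
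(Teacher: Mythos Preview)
The paper does not actually supply a proof of this proposition; it records the statement and attributes it to Cheeger--Simons \cite[(8.3)]{Cheeger-Simons:DCGI}. Your argument is correct and is essentially the standard one: the key point is that $\BG[\GLn{n}^d]$ is a $K(\GLn{n}^d,1)$, so classifying maps of flat bundles are determined up to homotopy by the induced map on $\pi_1$, whence $\BG[\rho\circ f_*]\simeq \BG[\rho]\circ f$; the rest is contravariant functoriality of pullback and naturality of the Universal Coefficient isomorphism. You have also silently corrected two typos in the statement: the target of $f^*$ should be $H^{2k-1}(N;\C/\Z)$, and in the evaluation formula one should take $\kappa\in H_{2k-1}(N;\Z)$, as you do.
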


\begin{remark}\label{rem:G.M.ccs}
One can also define the Chern and Cheeger-Chern-Simons classes of a representation $\rho\colon \Gamma\to\GLn{n}$ as cohomology classes of the classifying space $\BG$.
The \emph{$k$-th Chern class of $\rho$ in $\BG$} is the $k$-th Chern class $c_k(\rho)\in H^{2k}(\BG;\Z) $ of the vector bundle $\EG\times_\rho\C^n$ over $\BG$, where $\EG$ is the universal cover of the classifying space $\BG$.
Let $\widehat{c}_k\in H^{2k-1}(\BG[\GLn{n}]^d;\C/\Z)$ be the $k$-th universal Cheeger-Chern-Simons class for flat bundles \eqref{eq:UCCSc}.
Let $\BG[\rho] \colon \BG \to \BG[\GLn{n}]^d$ be the map induced between classifying spaces by $\rho$.
The \emph{$k$-th Cheeger-Chern-Simons class of $\rho$ in $\BG$} is the pullback
\begin{equation}\label{eq:ccs.BG}
\widehat{c}_k(\rho)=\BG[\rho]^*(\widehat{c}_k)\in H^{2k-1}(\BG;\C/\Z).
\end{equation}
	
Let $M$ be a smooth manifold with fundamental group $\pi_1(M)=\Gamma$ and consider the map $\phi\colon M\to \BG$.
Then the $k$-th Cheeger-Chern-Simons class is given by the pull-back $\widehat{c}_{\rho,k}=\phi^*(\widehat{c}_k(\rho))$.
\end{remark}

\subsection{CCS-numbers of closed oriented $3$-manifolds}

Let $L$ be a closed oriented $3$-manifold and $\rho\colon\pi_1(L)\to\GLn{n}$ a representation of its fundamental group. There are only two non-zero Cheeger-Chern-Simons classes of $\rho$, namely
$\widehat{c}_{\rho,1}\in H^{1}(L;\C/\Z)$ and $\widehat{c}_{\rho,2}\in H^{3}(L;\C/\Z)$. As before, if $\nu\in H_{1}(L;\Z)$ we have the \textit{first CCS-number of $\rho$ with respect to $\nu$} given by $\widehat{c}_{\rho,1}(\nu)$. On the other hand, we always take the \textit{second CCS-number of $\rho$} with respect to the fundamental class $[L]\in H^{3}(L;\Z)$ of $L$, since the second CCS-number with respect to any other class in $H^{3}(L;\Z)$ is a multiple of $\widehat{c}_{\rho,2}([L])$.

A representation $\rho \colon \pi_1(L) \to \GLn{n}$ is said to be \textit{topologically trivial} if the vector bundle $V_\rho\to M$ is topologically trivial, i.\ e., it is isomorphic
as a topological vector bundle to the product bundle $M\times\C^n\to M$. For a topologically trivial representation $\rho$, one can compute its second CCS-number via the reduced $\xi$-invariant of the Dirac operator of $L$ twisted by $\rho$ defined by Atiyah, Patodi and Singer in \cite{Atiyah-Patodi-Singer:SARGIII}.

\subsubsection{The $\tilde{\xi}$-invariant}

Let $L$ be a compact oriented $3$-manifold, then it is a spin-manifold \cite[Chapter~VII, Theorem~1]{Kirby:T4M} and it has a Dirac operator $D$, which is a first order, self-adjoint,
elliptic operator, acting on sections of the spinor bundle $\mathcal{S}\to L$ (see~\cite[\S~3.4]{Jost:RGGA} or \cite[Example~5.9]{Lawson-Michelsohn:SpinGeo}).
The operator $D$ has a real discrete spectrum and one can define a complex valued function $\eta(s;A)=\sum_{\lambda\not=0}(\sign\lambda)\abs{\lambda}^{-s}$ called the $\eta$-series
which converges for $\mathrm{Re}(s)$ sufficiently large and extends to a meromorphic function on the whole complex $s$-plane and it is finite at $s=0$.
The value $\eta(D)=\eta(0;D)$ is the \textit{$\eta$-invariant of $D$}. There is a version which takes into account the zero eigenvalues of $D$ by setting $\xi(D)=\frac{h+\eta(D)}{2}$,
where $h=\dim(\ker(D))$. Given a representation $\rho\colon\pi_1(L)\to\GLn{n}$ one can couple the Dirac operator $D$ to the flat vector bundle $V_\rho\to L$
to get a twisted operator $D_\rho$ acting on sections of the vector bundle $\mathcal{S}\otimes V_\rho$.
The operator $D_\rho$ may no longer be self-adjoint, but it has a self-adjoint symbol, and this is enough to define the $\eta$-function $\eta(s;D_\rho)$
and get its $\xi$-invariant $\xi_\rho(D)=\xi(D_\rho)$ and its \textit{reduced} version
\begin{equation}\label{eq:red.xi}
 \tilde{\xi}_\rho(D):=\xi_\rho(D)-n\xi(D)\in\C/\Z,
\end{equation}
which by \cite[Section~2]{Atiyah-Patodi-Singer:SARGIII} its reduction modulo $\Z$ is a homotopy invariant of $D$.

Let $\rho \colon \pi_1(L) \to \GLn{n}$ be a topologically trivial representation, then \cite[Theorem~3.3, Corollary~3.4]{arciniegaetal:CCSC}
\begin{equation}\label{eq:CCS2=xi}
\widehat{c}_{\rho,2}([L])=\tilde{\xi}_\rho(D).
\end{equation}

\subsubsection{CCS-numbers of rational homology $3$-spheres}

Now we restrict to the case when the $3$-manifold $L$ is a rational homology $3$-sphere.
Let $\rho\colon\pi_1(L)\to \GLn{n}$ be a representation.
By Poincaré duality $H^1(L;\C)=H^2(L;\C)=0$, by the cohomology long exact sequence of $L$ corresponding to the short exact sequence of coefficients $0 \to \Z \to \C \to \C/\Z \to 0$ we have a correspondence between the first Cheeger-Chern-Simons class and the first Chern class of $\rho$ under the isomorphism
\begin{equation}\label{eq:ccs-c1}
\begin{aligned}
H^1(L;\C/\Z) &\cong H^2(L;\Z)\\
\widehat{c}_{\rho,1} &\mapsto c_{\rho,1}.
\end{aligned}
\end{equation}
In this case we have the following properties \cite[\S 3.4]{arciniegaetal:CCSC}.

\begin{theorem}\label{thm:rhs}
Let $L$ be a rational homology $3$-sphere. Let $\rho\colon\pi_1(L)\to \GLn{n}$ be a representation of its fundamental group
and $\det(\rho)=\det\circ\rho$, where $\det\colon \GLn{n} \to \GLn{1}$ is the determinant homomorphism. Then
\begin{enumerate}
 \item The representation $\rho$ is topologically trivial if and only if $c_{\rho,1}=0$, if and only if $\widehat{c}_{\rho,1}=0$.\label{it:top.triv}
 \item $\widehat{c}_{\rho,1}=\widehat{c}_{\det(\rho),1}$.\label{it:c1.r.det}
 \item $\widehat{c}_{\rho,2}([L])=\tilde\xi_{\rho}(D)-\tilde\xi_{\det(\rho)}(D)$.\label{it:xi.non.triv}
\end{enumerate}
\end{theorem}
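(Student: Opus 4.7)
My plan is to handle the three items in order, using (1) and (2) to set up the topologically trivial case needed for (3), and then invoking the already-cited result \eqref{eq:CCS2=xi} on a carefully chosen auxiliary representation.

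For \eqref{it:top.triv}, the equivalence $c_{\rho,1}=0\Leftrightarrow \widehat{c}_{\rho,1}=0$ is immediate from the isomorphism \eqref{eq:ccs-c1}. For the equivalence with topological triviality, I would use that on a closed oriented $3$-manifold any complex vector bundle is classified by its first Chern class: the higher universal Chern classes $c_k\in H^{2k}(\mathrm{B}\GLn{n};\Z)$ lie in degrees $\geq 4$, so the $3$-skeleton of $\mathrm{B}\GLn{n}$ agrees up to homotopy with $\mathrm{B}\GLn{1}=\mathbb{CP}^{\infty}$; hence the classifying map of $V_{\rho}$ factors (up to homotopy) through $\mathbb{CP}^{\infty}$ and $V_\rho$ is trivial precisely when $c_{\rho,1}=0$.

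For \eqref{it:c1.r.det}, note that by naturality of the associated-bundle construction $V_{\det(\rho)}=\det(V_\rho)$, so $c_{\det(\rho),1}=c_1(\det V_\rho)=c_1(V_\rho)=c_{\rho,1}$; applying \eqref{eq:ccs-c1} gives the equality of first CCS classes.

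For \eqref{it:xi.non.triv}, the key idea is to reduce to the topologically trivial case. Set
\[
\sigma=\rho\oplus(\det\rho)^{-1},
\]
a representation of rank $n+1$. Its determinant is trivial, so by \eqref{it:c1.r.det} we have $c_{\sigma,1}=0$, and by \eqref{it:top.triv} the representation $\sigma$ is topologically trivial. Applying \eqref{eq:CCS2=xi} yields
\[
\widehat{c}_{\sigma,2}([L])=\tilde\xi_{\sigma}(D).
\]
The right-hand side equals $\tilde\xi_\rho(D)-\tilde\xi_{\det\rho}(D)$: the reduction $\tilde\xi=\xi-(\rank)\xi(D)$ is additive on direct sums, and for a unitary character $\chi$ one has $\tilde\xi_{\chi^{-1}}(D)=-\tilde\xi_\chi(D)$ via the action of complex conjugation on the twisted Dirac operator. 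The left-hand side, via the Whitney formula for Cheeger-Chern-Simons classes, decomposes as $\widehat{c}_{\rho,2}+\widehat{c}_{\rho,1}\star\widehat{c}_{(\det\rho)^{-1},1}+\widehat{c}_{(\det\rho)^{-1},2}$, where the last term vanishes since $\widehat{c}_2$ of a line bundle is zero.

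\emph{The main obstacle} is showing that the cross term pairs trivially with $[L]$. I would handle this by working in differential cohomology and using that on a rational homology $3$-sphere the Bockstein-type product of two flat first CCS classes reduces to an integer-valued linking pairing; alternatively, and more directly, one can bypass the Whitney formula altogether by applying the Atiyah-Patodi-Singer index theorem on a $4$-manifold $W$ bounding $L$ to the virtual extension $\mathcal V_\rho\ominus\det\mathcal V_\rho\ominus(n-1)\underline{\C}$, whose Chern character is concentrated in degree $4$ as $-c_2(\mathcal V_\rho)$. The index is an integer, the $\hat A$-genus contributions from the rank-zero virtual piece vanish, and one is left with $\int_W c_2(\mathcal V_\rho)$ modulo $\Z$, which is identified with $\widehat{c}_{\rho,2}([L])$ via the Cheeger-Simons boundary formula for $\widehat{C}_2$. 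Matching this to the boundary contribution $\tilde\xi_\rho(D)-\tilde\xi_{\det\rho}(D)$ produces exactly \eqref{it:xi.non.triv}.
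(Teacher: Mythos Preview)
The paper does not actually prove this theorem; it is quoted from \cite[\S3.4]{arciniegaetal:CCSC} and stated without proof. So there is no ``paper's own argument'' to compare against, and what follows is an assessment of your sketch on its own merits.

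Your treatment of \eqref{it:top.triv} and \eqref{it:c1.r.det} is fine: on a closed oriented $3$-manifold the first Chern class classifies complex vector bundles, and $c_1(V_\rho)=c_1(\det V_\rho)$ is standard.

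For \eqref{it:xi.non.triv}, your \emph{first} approach via $\sigma=\rho\oplus(\det\rho)^{-1}$ contains a genuine error: the identity $\tilde\xi_{\chi^{-1}}(D)=-\tilde\xi_\chi(D)$ is false. In fact $\tilde\xi_{\chi^{-1}}(D)=+\tilde\xi_\chi(D)$. You can see this directly from the character formula of Theorem~\ref{thm:xi.ssf}: since $\mathrm{def}(\varsigma(g^{-1}))=\mathrm{def}(\varsigma(g))$ (the eigenvalues of $\varsigma(g^{-1})$ are the inverses of those of $\varsigma(g)$, and a short computation shows the defect is unchanged), the substitution $g\mapsto g^{-1}$ gives
\[
\tilde\xi_{\chi^{-1}}(D)=\frac{1}{|\Gamma|}\sum_{g\neq1}(\chi(g^{-1})-1)\,\mathrm{def}(\varsigma(g))
=\frac{1}{|\Gamma|}\sum_{g\neq1}(\chi(g)-1)\,\mathrm{def}(\varsigma(g^{-1}))=\tilde\xi_\chi(D).
\]
Equivalently, the quaternionic structure on spinors in dimension $3$ gives an anti-linear isomorphism intertwining $D_\chi$ and $D_{\bar\chi}$, so they have the same (real) spectrum. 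With your $\sigma$ you would obtain $\tilde\xi_\sigma=\tilde\xi_\rho+\tilde\xi_{\det\rho}$, not the desired difference, so the argument cannot close this way even if the Whitney cross term were handled.

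Your \emph{second} approach is the correct one and is essentially the standard proof: work with the virtual representation $\rho\ominus\det\rho\ominus(n-1)\cdot\mathbf{1}$, which has rank $0$ and $c_1=0$, hence is topologically trivial on $L$; its reduced $\tilde\xi$-invariant is exactly $\tilde\xi_\rho-\tilde\xi_{\det\rho}$, and (by the total Chern class computation $c(\rho)/c(\det\rho)=1+c_2(\rho)+\cdots$) its second Chern/CCS class agrees with that of $\rho$. Applying \eqref{eq:CCS2=xi} (or the APS index theorem on a bounding $4$-manifold, as you outline) then yields \eqref{it:xi.non.triv}. I would drop the $\sigma=\rho\oplus(\det\rho)^{-1}$ route entirely and present only this second argument.
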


We can use Theorem~\ref{thm:rhs}-\eqref{it:c1.r.det} to compute the first CCS-numbers of $\rho$.

\begin{proposition}[{\cite[(0.2)]{DUPONT1994}}]\label{prop:1stCssn}
Let $\rho\colon\pi_1(L)\to\GLn{n}$ be a representation and $\det(\rho)\colon\pi_1(L)\to\GLn{1}$ its determinant. Then, the first CCS-number of $\rho$ with respect to
$\bar{\nu}\in H_{1}(L;\Z)\cong\Ab(\pi_1(L))$ is given by
\begin{equation}\label{eq:df}
\widehat{c}_{\rho,1}(\bar{\nu})=\widehat{c}_{\det(\rho),1}(\bar{\nu})=\frac{1}{2\pi i}\log(\det(\rho)_{\mathrm{Ab}}(\bar{\nu})),
\end{equation}
where $\det(\rho)_{\mathrm{Ab}}\colon\Ab(\pi_1(L))\to\GLn{1}$ is defined by diagram \eqref{eq:rho.Ab}.
\end{proposition}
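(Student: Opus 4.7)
The plan is to reduce the statement to the one-dimensional case via Theorem~\ref{thm:rhs}\eqref{it:c1.r.det} and then identify the universal first Cheeger--Chern--Simons class on $\BG[\GLn{1}^d]$ with the logarithm map $\tfrac{1}{2\pi i}\log$.

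First, Theorem~\ref{thm:rhs}\eqref{it:c1.r.det} already yields the first equality $\widehat{c}_{\rho,1}=\widehat{c}_{\det(\rho),1}$ in \eqref{eq:df}. Setting $\chi=\det(\rho)\colon\pi_1(L)\to\GLn{1}$, the problem therefore reduces to proving
\begin{equation*}
\widehat{c}_{\chi,1}(\bar\nu)=\frac{1}{2\pi i}\log\bigl(\chi_{\mathrm{Ab}}(\bar\nu)\bigr)\qquad\text{for every } \bar\nu\in H_1(L;\Z).
\end{equation*}

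The key step would be to compute the universal class $\widehat{c}_1=\iota^*(\widehat{C}_1)\in H^1(\BG[\GLn{1}^d];\C/\Z)$ explicitly. Since $\GLn{1}^d=\C^{\ast}$ is discrete and abelian, $\BG[\GLn{1}^d]$ is an Eilenberg--Mac Lane space $K(\C^{\ast},1)$, so
\begin{equation*}
H^1(\BG[\GLn{1}^d];\C/\Z)\cong \Hom(\C^{\ast},\C/\Z).
\end{equation*}
I would show that under this isomorphism $\widehat{c}_1$ corresponds to the homomorphism $\tfrac{1}{2\pi i}\log\colon \C^{\ast}\to\C/\Z$. This is the point where one has to unwind the Cheeger--Simons differential character construction \cite{Cheeger-Simons:DCGI}: the curvature of the universal differential character $\widehat{C}_1$ on $(\gamma^1,\BG[\GLn{1}],\nabla_{\mathrm{univ}})$ is $\tfrac{1}{2\pi i}$ times the universal connection $1$-form, and restriction to a flat line bundle produces precisely $\tfrac{1}{2\pi i}\log$ of its holonomy.

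Finally, the result follows by naturality. Using Remark~\ref{rem:G.M.ccs} and the factorization \eqref{eq:rho.Ab} of $\chi$ through $\Ab(\pi_1(L))$, the class $\widehat{c}_{\chi,1}\in H^1(L;\C/\Z)$ is the pullback of $\widehat{c}_1$ along the composition $L\to\BG[\pi_1(L)]\to\BG[\Ab(\pi_1(L))]\to\BG[\GLn{1}^d]$. Evaluating on $\bar\nu\in H_1(L;\Z)$ and invoking the Hurewicz isomorphism $H_1(L;\Z)\cong\Ab(\pi_1(L))$ together with the universal coefficient identification \eqref{eq:coh.mor} then yields the displayed formula. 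The only nontrivial step is the identification of $\widehat{c}_1$ with $\tfrac{1}{2\pi i}\log$, a normalization check entirely within the Cheeger--Simons framework; all remaining steps are formal naturality.
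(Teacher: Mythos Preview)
The paper does not supply its own proof of this proposition: it is stated with the attribution \cite[(0.2)]{DUPONT1994} and no argument follows. Your sketch is therefore not being compared against anything in the paper; rather, you are reconstructing the standard argument behind the cited formula.

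Your outline is correct and is essentially the argument one finds in the literature. The reduction to the one-dimensional case via Theorem~\ref{thm:rhs}\eqref{it:c1.r.det} is immediate, and the remaining content is precisely the identification of the universal class $\widehat{c}_1\in H^1(\BG[\GLn{1}^d];\C/\Z)\cong\Hom(\C^*,\C/\Z)$ with $\tfrac{1}{2\pi i}\log$, which is the normalization fixed by Cheeger--Simons and recorded by Dupont. One small comment: you invoke Theorem~\ref{thm:rhs}\eqref{it:c1.r.det}, which in the paper is stated for rational homology $3$-spheres; for the bare equality $\widehat{c}_{\rho,1}=\widehat{c}_{\det(\rho),1}$ no such hypothesis is needed, since $c_1$ of a vector bundle always equals $c_1$ of its determinant line bundle and the correspondence \eqref{eq:crk.ckr} (or the analogous statement on $M$) transports this to the Cheeger--Chern--Simons side. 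This does not affect the validity of your argument in the intended setting.
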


\begin{proposition}[{\cite[Prop.~5.1]{arciniegaetal:CCSC}}]\label{prop:1drnt.xi0}
Let $\rho\colon\pi_1(L)\to\GLn{1}$ be a one-dimensional representation which is not topologically trivial. Then $\widehat{c}_{\rho,2}([L])=0$.
\end{proposition}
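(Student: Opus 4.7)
My plan is to derive this as a direct corollary of Theorem~\ref{thm:rhs}-\eqref{it:xi.non.triv}. Since we are in the subsection devoted to rational homology $3$-spheres, $L$ is such a manifold, and the hypothesis that $\rho$ is not topologically trivial is precisely what licenses the application of that item, yielding
\begin{equation*}
\widehat{c}_{\rho,2}([L])=\tilde\xi_{\rho}(D)-\tilde\xi_{\det(\rho)}(D).
\end{equation*}

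Next, I would exploit the fact that $\rho$ is one-dimensional: the determinant homomorphism $\det\colon\GLn{1}\to\GLn{1}$ is the identity, so $\det(\rho)=\rho$ as representations of $\pi_1(L)$. The twisted Dirac operators $D_\rho$ and $D_{\det(\rho)}$ therefore coincide, their reduced $\xi$-invariants are equal, and the right-hand side of the displayed formula collapses to zero.

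I do not foresee any substantive obstacle; the argument is essentially an unwinding of definitions, with the one-dimensionality of $\rho$ providing the crucial coincidence $\det(\rho)=\rho$. A minor point worth remarking on is the role of the ``not topologically trivial'' hypothesis: the alternative formula~\eqref{eq:CCS2=xi} available in the topologically trivial case would express $\widehat{c}_{\rho,2}([L])$ as the single term $\tilde\xi_{\rho}(D)$, for which the above cancellation is unavailable. However, on a rational homology $3$-sphere, combining Theorem~\ref{thm:rhs}-\eqref{it:top.triv} with Proposition~\ref{prop:1stCssn} forces any topologically trivial one-dimensional $\rho$ to be the trivial representation, so $\widehat{c}_{\rho,2}([L])=0$ holds trivially there as well and the conclusion in fact extends to every one-dimensional representation.
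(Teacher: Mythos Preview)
Your proof is correct and follows essentially the same approach as the paper: apply Theorem~\ref{thm:rhs}-\eqref{it:xi.non.triv} to get $\widehat{c}_{\rho,2}([L])=\tilde\xi_{\rho}(D)-\tilde\xi_{\det(\rho)}(D)$, then observe that $\det(\rho)=\rho$ for a one-dimensional representation, so the two terms cancel. Your closing remark extending the conclusion to the topologically trivial case is a correct and pleasant addendum, though not part of the statement itself.
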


\begin{proof}
If $\rho$ is not topologically trivial, by Theorem~\ref{thm:rhs}-\eqref{it:xi.non.triv} the second CCS-number is given by  $\widehat{c}_{\rho,2}([L])=\tilde\xi_{\rho}(D)-\tilde\xi_{\det(\rho)}(D)$,
but since it is $1$-dimensional, we have that $\alpha=\det(\alpha)$ and thus $\widehat{c}_{\alpha,2}([L])=0$.
\end{proof}

\subsubsection{Spherical $3$-manifolds, CCS-numbers and the $\tilde{\xi}$-invariant}\label{ssec:s3m.ccsn.xi}

Let $\Gamma$ be a group listed in Theorem~\ref{thm:fsgfi} and let $\varsigma\colon\Gamma\to U(2)$ be a faithful fixed-point free unitary representation, that is, $\det(I-\varsigma(g))\neq0$
for every $g\in\Gamma$.
Hence, $\Gamma$ acts on $\mathbb{S}^3$ via $\varsigma$ freely and isometrically on $\mathbb{S}^3$ and the quotient manifold $M=\mathbb{S}^3/\varsigma(\Gamma)$ is a spherical $3$-manifold.
The homology groups of $M$ are $H_0(M)=\Z$, $H_1(M)=\Ab(\Gamma)$, $H_2(M)=0$ and $H_3(M)=\Z$ (compare with \cite[p.~45]{Lamotke:RSIS}). Thus, $M$ is a rational homology sphere and
\eqref{eq:ccs-c1}, Theorem~\ref{thm:rhs} and Proposition~\ref{prop:1drnt.xi0} apply.

Let $\lambda_1(g)$ and $\lambda_2(g)$ be the eigenvalues of $\varsigma(g)\in\U$ and define
\begin{equation}\label{eq:def.spin}
 \mathrm{def}\bigl(\varsigma(g)\bigr)=\frac{\sqrt{\lambda_1(g)\lambda_2(g)}}{(\lambda_1(g)-1)(\lambda_2(g)-1)}=\frac{\sqrt{\det(\varsigma(g))}}{1-\mathrm{Tr}(\varsigma(g))+\det(\varsigma(g))},
\end{equation}
where $\mathrm{Tr}(\varsigma(g))$ is the trace of $\varsigma(g)$.
Notice that the denominator is non-zero because
\begin{equation*}
 0\neq\det(I-\varsigma(g))=(\lambda_1(g)-1)(\lambda_2(g)-1),
\end{equation*}
or because by Corollary~\ref{cor:FI=SF}, $\varsigma(\Gamma)$ is a small finite subgroup of $\U$ and therefore its elements $\varsigma(g)\neq I$ have no eigenvalues equal to $1$.

The next theorem gives a formula to compute the reduced $\xi$-invariant of the Dirac operator $D_\rho$ of $M$ twisted by a representation $\rho\colon\Gamma=\pi_1(M)\to\U[k]$
(see \cite[Lemma~2.1]{Gilkey:EIKTODSSF} or \cite[Lemma~4.1.1]{Wang:TKTWCEI}).

\begin{theorem}\label{thm:xi.ssf}
Let $\varsigma\colon\Gamma\to U(2)$ be a fixed-point free representation and let $M=\mathbb{S}^3/\varsigma(\Gamma)$. Let $\rho\colon\Gamma=\pi_1(M)\to\U[k]$ be a representation of $\Gamma$. Then
\begin{equation*}
\tilde{\xi}_\rho(D) =\frac{1}{|\Gamma|}\sum_{\substack{g\in \Gamma,\\g\neq 1}}\bigl(\mathrm{Tr}(\rho(g))-k\bigr) \mathrm{def}\bigl(\varsigma(g)\bigr).
\end{equation*}
\end{theorem}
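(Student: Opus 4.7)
The plan is to deduce the formula from the $G$-equivariant Atiyah--Patodi--Singer index theorem applied to the Dirac operator on the unit $4$-ball, combined with the Atiyah--Bott Lefschetz fixed-point formula at the origin. Geometrically, $\varsigma$ promotes the action of $\Gamma$ on $\mathbb{S}^3$ to an action by unitary transformations on $\C^2\supset \mathbb{D}^4$, whose boundary is $\mathbb{S}^3$. Because $\varsigma$ is fixed-point-free on $\mathbb{S}^3$, the origin $0$ is the unique fixed point on $\mathbb{D}^4$ of every $g\neq 1$, and the differential $dg|_0$ equals $\varsigma(g)\in\U$ with complex eigenvalues $\lambda_1(g),\lambda_2(g)$.

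First I would consider the $\Gamma$-equivariant chiral Dirac operator $\tilde D^+$ on $\mathbb{D}^4$ twisted by the trivial bundle $\mathbb{D}^4\times\C^k$ with the $\Gamma$-action $\rho$. Applying the $G$-equivariant APS index theorem, for each $g\in\Gamma$ one obtains
\begin{equation*}
\operatorname{ind}_g(\tilde D^+_\rho)=L_g(0)-\tfrac{1}{2}\bigl(h_g+\eta_g(\tilde D_\rho^{\mathbb{S}^3})\bigr),
\end{equation*}
where $\eta_g$ denotes the equivariant $\eta$-invariant on the boundary and $L_g(0)$ is the local contribution at the origin. The Atiyah--Bott fixed-point formula for the spin Dirac operator at an isolated fixed point with rotational differential $\varsigma(g)$ produces a spin character $\sqrt{\lambda_1(g)\lambda_2(g)}$ and a tangential denominator $\det(I-\varsigma(g))=(1-\lambda_1(g))(1-\lambda_2(g))$, giving
\begin{equation*}
L_g(0)=\mathrm{Tr}(\rho(g))\cdot \frac{\sqrt{\lambda_1(g)\lambda_2(g)}}{(1-\lambda_1(g))(1-\lambda_2(g))}=\mathrm{Tr}(\rho(g))\,\mathrm{def}\bigl(\varsigma(g)\bigr).
\end{equation*}
For $g=1$ the fixed-point set is all of $\mathbb{D}^4$ and the local term becomes the index density integral $k\int_{\mathbb{D}^4}\hat A$, which vanishes since $\mathbb{D}^4$ is flat.

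Next I would descend the equivariant data on $\mathbb{S}^3$ to $M=\mathbb{S}^3/\Gamma$ using the standard character-averaging identity
\begin{equation*}
\xi_\rho(D)=\frac{1}{|\Gamma|}\sum_{g\in\Gamma}\mathrm{Tr}(\rho(g))\,\xi_g(\tilde D),
\end{equation*}
obtained by decomposing $L^2$-sections of $\mathcal{S}\otimes V_\rho$ on $M$ as the $\rho$-isotypical component of $L^2$-sections of the spinor bundle on $\mathbb{S}^3$ tensored with $\C^k$. Substituting $\tfrac12(h_g+\eta_g)=L_g(0)-\operatorname{ind}_g(\tilde D^+_\rho)$ from the previous step and reducing modulo $\Z$ (since the equivariant index is an integer), the $g=1$ piece yields precisely $k\,\xi(D)$, which is absorbed into $\tilde\xi_\rho(D)=\xi_\rho(D)-k\xi(D)$ once one applies the identical decomposition to the $k$-dimensional trivial representation. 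What remains is exactly the stated sum over $g\neq 1$ of $\bigl(\mathrm{Tr}(\rho(g))-k\bigr)\,\mathrm{def}(\varsigma(g))/|\Gamma|$.

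The hard part is pinning down the local Lefschetz contribution at the origin with the correct branch of $\sqrt{\det(\varsigma(g))}$: one must lift $\varsigma(\Gamma)\subset\U$ to $\operatorname{Spin}(4)$ coherently across $\Gamma$, and check that the chosen branches are compatible with the spin structure on $\mathbb{S}^3$ descended to $M$. For $\Gamma\subset\U$ such a lift exists because the covering $\operatorname{Spin}(4)\to\SO{4}$ splits over the image of $\U$, but making the signs match a genuine $\C/\Z$-valued invariant (rather than a $\C/2\Z$-valued one) is the delicate step. A secondary technical point is that the twisted operator $\tilde D_\rho$ on $\mathbb{S}^3$ need not be self-adjoint when $\rho$ is non-unitary, but since our $\rho$ factors through $\U[k]$ this is automatic; for the general $\GLn{k}$-case one would instead appeal to the self-adjoint symbol argument of~\cite{Atiyah-Patodi-Singer:SARGIII}.
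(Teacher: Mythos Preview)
The paper does not give its own proof of this statement: it simply records the formula and refers to \cite[Lemma~2.1]{Gilkey:EIKTODSSF} and \cite[Lemma~4.1.1]{Wang:TKTWCEI} for the derivation. So there is no ``paper's proof'' to compare against beyond those citations.

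Your sketch is essentially the standard argument underlying those references: apply the $\Gamma$-equivariant APS theorem to the Dirac operator on the flat $4$-ball bounding $\mathbb{S}^3$, compute the isolated fixed-point contribution at the origin via Atiyah--Bott--Lefschetz, and average over $\Gamma$ to pass from the equivariant $\eta$-invariant on $\mathbb{S}^3$ to the twisted $\xi$-invariant on the quotient. The identification of the local term with $\mathrm{Tr}(\rho(g))\,\mathrm{def}(\varsigma(g))$ and the cancellation of the $g=1$ term against $k\,\xi(D)$ after subtracting the trivial representation are both correct. Your caveat about the choice of branch of $\sqrt{\det(\varsigma(g))}$ and the spin lift is exactly the point that needs care in a full proof, and is handled in the cited sources; for the purposes of this paper the formula is taken as input, so your level of detail already exceeds what the paper requires.
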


y\section{Classification of flat vector bundles over spherical $3$-manifolds}\label{sec:Class.FVB}

Let $M=\mathbb{S}^3/\Gamma$ be a spherical $3$-manifold where $\Gamma$ is a group listed in Theorem~\ref{thm:fsgfi}. Let $\rho\colon\pi_1(M)\cong\Gamma\to\GLn{n}$ be an
irreducible representation of the fundamental group of $M$.
Consider the triple $[\dim\rho; \widehat{c}_{\rho,1}; \widehat{c}_{\rho,2}]$ given by the dimension and the first and second Cheeger-Chern-Simons-classes of $\rho$, we call it the \textit{CSS-triple} of the representation $\rho$.
By Remark~\ref{rem:G.M.ccs}, the CCS-triple $[\dim\rho; \widehat{c}_{\rho,1}; \widehat{c}_{\rho,2}]$ is equivalent to the CCS-triple $[\dim\rho; \widehat{c}_{1}(\rho); \widehat{c}_{2}(\rho)]$ in $\BG$.

In this section we prove that the CSS-triple is a complete invariant of the pairs $(M,\rho)$, which by \eqref{eq:Flat.Rep} correspond to indecomposable flat vector bundles over $M$.

\begin{remark}\label{rem:CCSt.vCCSn}
By \eqref{eq:coh.mor} we can see the cohomology classes $\widehat{c}_{\rho,1}$ and $\widehat{c}_{\rho,2}$ as homomorphisms
\begin{equation*}
 \widehat{c}_{\rho,1}\colon H_1(M;\Z)\to\C/\Z,\qquad\text{and}\qquad \widehat{c}_{\rho,2}\colon H_3(M;\Z)\to\C/\Z.
\end{equation*}
Since $H_3(M;\Z)\cong\Z$ is generated by the fundamental class $[M]$, $\widehat{c}_{\rho,2}$ is determined by its value on $[M]$, that is, by the second CCS-number $\widehat{c}_{\rho,2}([M])$.
On the other hand, by Remark~\ref{rem:Ab.G} we have that $H_1(M;\Z)\cong\Ab(\Gamma)$ is the direct product of at most three finite cyclic groups,
say $H_1(M;\Z)\cong\mathsf{C}_{l_1}\times\mathsf{C}_{l_2}\times\mathsf{C}_{l_3}$. Let $\nu_i$ be the generator of $\mathsf{C}_{l_i}$ with $i=1,2,3$, thus $\widehat{c}_{\rho,1}$ is determined by its values
$\widehat{c}_{\rho,1}(\nu_i)$ with $i=1,2,3$. Hence, the CSS-triple $[\dim\rho; \widehat{c}_{\rho,1}; \widehat{c}_{\rho,2}]$ is equivalent to consider the vector
\begin{equation*}
(\dim\rho;\widehat{c}_{\rho,1}(\nu_1),\widehat{c}_{\rho,1}(\nu_2),\widehat{c}_{\rho,1}(\nu_3);\widehat{c}_{\rho,2}([M])),
\end{equation*}
where the number of first CCS-numbers depends on the number of cyclic factors of $H_1(M;\Z)$. We call this vector the \textit{vector of CCS-numbers} of $\rho$.
\end{remark}

\subsection{Classification of irreducible representations of $\mathsf{C}_n$}

Let $\mathsf{C}_n$ denote the cyclic group of order $n$ and let $x$ be a generator.
Let $n$ and $q$ be integers with $\gcd(n,q)=1$. Consider the representation
\begin{align*}
\varsigma\colon\mathsf{C}_n &\to U(2)\\
x&\mapsto \left(\begin{smallmatrix}
                 \zeta_n & 0 \\ 0 & \zeta_n^q
                \end{smallmatrix}\right).
\end{align*}
Notice that the image $\varsigma(\mathsf{C}_n)$ is the cyclic subgroup $\mathbb{C}_{n,q}$ of $\U$ defined in \eqref{eq:cyclic.U2}.
The \textit{lens space} $L(n;q)$ is the spherical $3$-manifold given by the quotient $\mathbb{S}^3/\varsigma(\mathsf{C}_n)$.
It has fundamental group $\pi_1(L(n;q))\cong\mathsf{C}_n$, which has $n$ (one-dimensional) irreducible representations, denoted by $\alpha_j^{(q)}\colon\pi_1(L(n;q))\to\U[1]$,
given by
\begin{equation}\label{eq:alphaj}
\alpha_j^{(q)}(x)=\zeta_n^j,\quad\text{for $0\leq j\leq n-1$, where $\zeta_n=e^{\frac{2\pi i}{n}}$.}
\end{equation}

\begin{remark}
 Note that the index $j$ in $\alpha_j$ is shifted by $-1$ with respect to the notation used in \cite{arciniegaetal:CCSC}. This will simplify some computations.
\end{remark}

In order to compute the first CCS-number of the irreducible representations of $\pi_1(L(n;q))$ we need to consider two cases:

\paragraph{\textbf{Case $L(n;-1)$:}}
This case was done in \cite[Table~2]{arciniegaetal:CCSC} and it corresponds to lens spaces which are links of rational double point singularities.

\paragraph{\textbf{Case $L(n;q)$ with $q  \not \equiv -1 \mod n$:}}
If $q \equiv -1 \mod n$, then $L(n;q) = L(n;-1)$ (see Remark~\ref{rem:Cnp.Cmq}). Since  $\gcd(n,q)=1$, there exists $r \in \Z$ such that $0 < r  < n $ and $qr\equiv 1 \mod n$.
By \cite[Theorem~V]{Olum:MMND}, there exist a map
\begin{equation*}
    \varphi \colon L(n;q) \to L(n;-1),
\end{equation*}
such that $\varphi$ induces the identity homomorphism on fundamental groups, that is,
\begin{equation}\label{eq:Id.fg}
\begin{split}
 \varphi_*=Id\colon \pi_1(L(n;q))\cong\mathsf{C}_n&\to\mathsf{C}_n\cong\pi_1(L(n;-1)),\\
 x&\mapsto x
\end{split}
\end{equation}
with $\deg \varphi \equiv -r \mod n$.

Consider the representation $\alpha_{j}^{(-1)}\circ\varphi_*\colon\pi_1(L(n;q))\to \U[1]$, by \eqref{eq:Id.fg} and \eqref{eq:alphaj}
\begin{equation}\label{eq:RepLenteq}
(\alpha_{j}^{(-1)}\circ\varphi_*)(x)=\alpha_{j}^{(-1)}(x)=\zeta_n^{j}=\alpha_{j}^{(q)}(x),\quad\text{for $0\leq j\leq n-1$.}
\end{equation}
By Proposition~\ref{prop:ccs-pullback} and \eqref{eq:RepLenteq} we have that the CCS-numbers of the representations $\alpha_{j}^{(q)}$ are given by
\begin{equation}\label{eq:ccs-num.q}
\widehat{c}_{\alpha_{j}^{(q)},1}(x)= \widehat{c}_{\alpha_{j}^{(-1)},1}\bigl(\overline{\varphi}_*(x)\bigr),
\end{equation}
where $\overline{\varphi}_*\colon H_1(L(n;q);\Z)\to H_1(L(n;-1);\Z)$ is the homomorphism induced by $\varphi$ in homology, which is also the identity homomorphism $\mathsf{C}_n\to\mathsf{C}_n$
given by $x\mapsto x$.

Using the values of the first CCS-numbers of the previous case \cite[Table~2]{arciniegaetal:CCSC} and \eqref{eq:ccs-num.q} we get the first CCS-numbers of the irreducible representations $\alpha_{j}^{(q)}$ for any $q\in\Z$ (including $q \equiv -1 \mod n$) which we present in Table~\ref{tbl:nq}:
\begin{table}[H]
\begin{equation*}
\setlength{\extrarowheight}{4pt}
\begin{array}{|c|c|c|c|c|c|c|}\hline
\mathsf{C}_n & \alpha_{0}^{(q)} &  \alpha_{1}^{(q)} & \dots  &\alpha_{j}^{(q)} &\dots & \alpha_{n-1}^{(q)} \\\hline
\widehat{c}_{\alpha_{j}^{(q)},1}(x) & 0 & \frac{1}{n} & \dots &\frac{j}{n} &\dots &\frac{n-1}{n} \\[3pt]\hline
\end{array}
\end{equation*}
\caption{First CCS-numbers of irreducible representations of $\pi_1(L(n;q))$.}\label{tbl:nq}
\end{table}

\begin{remark}\label{rem:irr.rep.Cn.class}
Since all the first CCS-numbers in Table~\ref{tbl:nq} are different, one-dimensional representations of $\mathsf{C}_n$ are classified by their first CCS-numbers $\widehat{c}_{\alpha_{j}^{(q)},1}(x)$.
\end{remark}

\begin{corollary}\label{cor:vccs.lens}
One-dimensional representations of $\pi_1(L(n;q))\cong\mathsf{C}_n$ are classified by the vectors of CCS-numbers which are of the form $(1;\widehat{c}_{\alpha_j^{(q)},1}(x);0)$.
\end{corollary}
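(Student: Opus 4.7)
The plan is to assemble several results from the preceding sections, so the argument should be short. For any one-dimensional $\rho\colon\pi_1(L(n;q))\to\GLn{1}$, the vector of CCS-numbers reads $(1;\widehat{c}_{\rho,1}(x);\widehat{c}_{\rho,2}([L(n;q)]))$, since $H_1(L(n;q);\Z)\cong\mathsf{C}_n$ is cyclic on $x$ and therefore the first CCS-class $\widehat{c}_{\rho,1}$ is encoded by its single value on $x$ (cf.\ Remark~\ref{rem:CCSt.vCCSn}).

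First I would show that the last coordinate vanishes for every one-dimensional representation. Because $L(n;q)$ is a rational homology $3$-sphere and $\rho$ is one-dimensional we have $\det(\rho)=\rho$, so Theorem~\ref{thm:rhs}-\eqref{it:xi.non.triv} gives
\begin{equation*}
\widehat{c}_{\rho,2}([L(n;q)])=\tilde\xi_{\rho}(D)-\tilde\xi_{\det(\rho)}(D)=0.
\end{equation*}
This covers uniformly the non-trivial case (where Proposition~\ref{prop:1drnt.xi0} would also suffice) and the trivial case, so every vector of CCS-numbers of a one-dimensional representation of $\pi_1(L(n;q))$ has the form $(1;\widehat{c}_{\rho,1}(x);0)$.

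Next, $\mathsf{C}_n$ is abelian of order $n$, so it has exactly $n$ inequivalent irreducible complex representations; the list $\alpha_0^{(q)},\dots,\alpha_{n-1}^{(q)}$ from \eqref{eq:alphaj} therefore exhausts them. By Table~\ref{tbl:nq} (together with Remark~\ref{rem:irr.rep.Cn.class}), the first CCS-numbers $\widehat{c}_{\alpha_j^{(q)},1}(x)=j/n\in\C/\Z$ are pairwise distinct for $0\leq j\leq n-1$. Combined with the vanishing of the second coordinate, the assignment $\rho\mapsto(1;\widehat{c}_{\rho,1}(x);0)$ is injective on isomorphism classes, which is the classification claimed.

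There is no substantial obstacle: the corollary is essentially a bookkeeping consequence of Theorem~\ref{thm:rhs}, Proposition~\ref{prop:1drnt.xi0}, and the already-tabulated computation of Table~\ref{tbl:nq}. The only mild point worth emphasizing is that $\det(\rho)=\rho$ forces the second CCS-number to vanish for \emph{every} one-dimensional $\rho$, including the trivial one, so Proposition~\ref{prop:1drnt.xi0} alone is not quite enough and Theorem~\ref{thm:rhs}-\eqref{it:xi.non.triv} is needed to handle the topologically trivial case uniformly with the others.
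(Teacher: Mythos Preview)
Your proof is correct and follows essentially the same line as the paper's: both identify the first CCS-number as the distinguishing invariant via Table~\ref{tbl:nq} and Remark~\ref{rem:irr.rep.Cn.class}, and both verify that the second CCS-number vanishes for every one-dimensional representation. The only difference is in how that vanishing is established: the paper splits into cases, handling the trivial representation $\alpha_0^{(q)}$ via \eqref{eq:CCS2=xi} and \eqref{eq:red.xi} (since $\tilde\xi_{\alpha_0}(D)=\xi(D)-\xi(D)=0$) and the remaining $\alpha_j^{(q)}$ via Proposition~\ref{prop:1drnt.xi0}, whereas you invoke Theorem~\ref{thm:rhs}-\eqref{it:xi.non.triv} uniformly, observing that $\det(\rho)=\rho$ forces $\widehat{c}_{\rho,2}([L])=0$ for all one-dimensional $\rho$ at once. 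Your route is slightly cleaner and avoids the case split; your closing remark that Proposition~\ref{prop:1drnt.xi0} by itself does not cover the topologically trivial case is a fair observation.
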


\begin{proof}
From Table~\ref{tbl:nq} and Theorem~\ref{thm:rhs}-\eqref{it:top.triv} the only topologically trivial representation is the trivial representation $\alpha_0^{(q)}$, by \eqref{eq:CCS2=xi} and \eqref{eq:red.xi}
we have $\widehat{c}_{\alpha_0^{(q)},2}([L])=0$. On the other hand, by Proposition~\ref{prop:1drnt.xi0} we have $\widehat{c}_{\alpha_j^{(q)},2}([L])=0$ for $j=1,\dots,n-1$.
Thus, the vector of CCS-numbers $(\dim\alpha_j^{(q)}; \widehat{c}_{\alpha_j^{(q)},1}(x);\widehat{c}_{\alpha_j^{(q)},2}([M]))$ of the representation $\alpha_j^{(q)}$ is of the form
$(1;\widehat{c}_{\alpha_j^{(q)},1}(x);0)$ and Remark~\ref{rem:irr.rep.Cn.class} says that it is enough the second entry given by the first CCS-number $\widehat{c}_{\alpha_j^{(q)},1}(x)$ to classify them.
\end{proof}

\subsection{Classification of irreducible representations of $\mathsf{BD}_{2q}$}\label{ssec:BD.D}

The binary dihedral groups $\mathsf{BD}_{2q}$ can be seen as subgroups of $\U$ (in fact of $\SU$) and correspond to the groups $\mathbb{D}_{q+1,q}$ in Theorem~\ref{thm:fssu2}.
Consider the spherical $3$-manifold $M=\mathbb{S}^3/\mathbb{D}_{q+1,q}$ given by the quotient of the action of $\mathbb{D}_{q+1,q}$ on $\mathbb{S}^3$. Hence we have $\pi_1(M)\cong\mathbb{D}_{q+1,q}\cong\mathsf{BD}_{2q}$.

From Table~\ref{tb:AbG}, for $q$ even we have that $\Ab(\mathsf{BD}_{2q})\cong\mathsf{C}_2\times\mathsf{C}_2$ and
for $q$ odd $\Ab(\mathsf{BD}_{2q})\cong\mathsf{C}_4$.
In both cases there are $4$ one-dimensional irreducible representations $\alpha_1$, $\alpha_2$, $\alpha_3$ and $\alpha_4$, given in subsection~\ref{ssec:BD2q}, where $\alpha_0$ is the trivial one.
The first CCS-numbers of these representations
were computed in \cite[Table~2]{arciniegaetal:CCSC} using \eqref{eq:df}, we summarize the results in Table~\ref{tb:fcn.bd} where $\bar{b},\bar{c}\in H_1(M;\Z)\cong\Ab(\mathsf{BD}_{2q})$.

\begin{table}[H]
\begin{equation*}
\setlength{\extrarowheight}{3pt}
\begin{array}{|c|c|c|c|c||c|c|c|c|c|}\hline
\text{$q$ {\tiny even}} & \alpha_0 & \alpha_1 & \alpha_2 & \alpha_3 & \text{$q$ {\tiny odd\footnotemark}} & \alpha_0 & \alpha_1 & \alpha_2 & \alpha_3\\\hline
\widehat{c}_{\alpha,1}(\bar{b}) & 0 & \frac{1}{2} & 0 & \frac{1}{2} & \widehat{c}_{\alpha,1}(\bar{b}) & 0 & \frac{1}{4} & \frac{1}{2} & \frac{3}{4} \\[3pt]\hline
\widehat{c}_{\alpha,1}(\bar{c}) & 0 & 0 & \frac{1}{2} & \frac{1}{2} &  & &  & & \\[3pt]\hline
\end{array}
\end{equation*}
\caption{First CCS-numbers of one-dimensional irreducible representations of $\mathsf{BD}_{2q}$.}\label{tb:fcn.bd}
\end{table}
\footnotetext{In this case $\bar{c}=\bar{b}^2$ and we have that $\widehat{c}_{\alpha,1}(\bar{c})=\widehat{c}_{\alpha,1}(\bar{b}^2)=2\ \widehat{c}_{\alpha,1}(\bar{b})\mod\Z$.}

\begin{remark}\label{rem:1dr.vcssn}
From Table~\ref{tb:fcn.bd} the four one-dimensional irreducible representations of $\pi_1(M)\cong\mathsf{BD}_{2q}$ have different first CCS-numbers.
By Theorem~\ref{thm:rhs}-\eqref{it:top.triv} the only topologically trivial representation is the trivial representation $\alpha_0$, and by the same arguments as in Remark~\ref{cor:vccs.lens}
the second CCS-numbers of all of these representations are zero. Thus, the vector of CCS-numbers of one-dimensional representations $\alpha_i$ of the fundamental group of $M$, with $i=1,\dots,4$ is of the form
\begin{align*}
(1;\widehat{c}_{\alpha_i,1}(\bar{b}),\widehat{c}_{\alpha_i,1}(\bar{c});0),&\quad \text{for $q$ even,}\\
(1;\widehat{c}_{\alpha_i,1}(\bar{b});0),&\quad \text{for $q$ odd,}
\end{align*}
and it is enough to consider the first CCS-numbers to classify them.
\end{remark}

\begin{remark}\label{rem:tt.2dr}
From the definition of the $q-1$ two-dimensional representations $\rho_t$, with $1\leq t\leq q-1$, of $\pi_1(M)\cong\mathsf{BD}_{2q}$ given in \eqref{eq:2d.rep.BD} we have that
\begin{equation*}
\det\rho_t=\begin{cases}
            \alpha_0 & \text{for $t$ odd,}\\
            \alpha_2 & \text{for $t$ even.}
           \end{cases}
\end{equation*}
Thus, from Theorem~\ref{thm:rhs}-\eqref{it:c1.r.det} we have that
\begin{equation}\label{eq:fccs.2d.BDq}
 \widehat{c}_{\rho_t,1}=\begin{cases}
                         \widehat{c}_{\alpha_0,1} & \text{for $t$ odd,}\\
                         \widehat{c}_{\alpha_2,1} & \text{for $t$ even.}
                        \end{cases}
\end{equation}
Hence, the first Cheeger-Chern-Simons class is not enough to distinguish the two-dimensional representations $\rho_t$, it is necessary to consider the second Cheeger-Chern-Simons class.
From \eqref{eq:fccs.2d.BDq}, Table~\ref{tb:fcn} and Theorem~\ref{thm:rhs}-\eqref{it:c1.r.det} the two-dimensional representations which are topologically trivial are
$\rho_t$ with $t$ odd. To compute the second CCS-number of $\rho_t$ we use \eqref{eq:CCS2=xi} or Theorem~\ref{thm:rhs}-\ref{it:xi.non.triv} and the formula in Theorem~\ref{thm:xi.ssf}.
\end{remark}
First we need a lemma.

\begin{lemma}\label{lem:TS1}
Let $n\in\mathbb{N}$ and $\zeta_n=e^{\frac{2\pi i}{n}}$. We have the equality
\begin{equation*}
\frac{2-\zeta_n^{tj}-\zeta_n^{-tj}}{2-\zeta_n^{j}-\zeta_n^{-j}}=\sum_{i=0}^{t-1}(t-i)\zeta_n^{ij}+\sum_{l=1}^{t-1}(t-l)\zeta_n^{-lj}.
\end{equation*}
\end{lemma}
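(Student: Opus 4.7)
The plan is to factor numerator and denominator and recognize the result as a product of two geometric series, so the identity falls out by expanding and collecting like powers.

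First I would observe the basic factorization
\begin{equation*}
2-\zeta_n^{tj}-\zeta_n^{-tj}=(1-\zeta_n^{tj})(1-\zeta_n^{-tj}),
\end{equation*}
and the analogous identity for $t=1$ in the denominator (assuming $\zeta_n^j\neq1$, so that the denominator is nonzero). This reduces the quotient to
\begin{equation*}
\frac{(1-\zeta_n^{tj})(1-\zeta_n^{-tj})}{(1-\zeta_n^{j})(1-\zeta_n^{-j})}=\left(\sum_{k=0}^{t-1}\zeta_n^{kj}\right)\left(\sum_{l=0}^{t-1}\zeta_n^{-lj}\right),
\end{equation*}
using the finite geometric series formula for each factor.

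Next I would expand the product, writing it as the double sum $\sum_{k,l=0}^{t-1}\zeta_n^{(k-l)j}$ and group terms according to the value $m=k-l$, which ranges from $-(t-1)$ to $t-1$. The number of pairs $(k,l)$ with $0\le k,l\le t-1$ and $k-l=m$ is exactly $t-|m|$, so the double sum equals
\begin{equation*}
\sum_{m=-(t-1)}^{t-1}(t-|m|)\,\zeta_n^{mj}=\sum_{i=0}^{t-1}(t-i)\zeta_n^{ij}+\sum_{l=1}^{t-1}(t-l)\zeta_n^{-lj},
\end{equation*}
splitting off the $m\ge 0$ and $m<0$ contributions (and noting the $m=0$ term appears only in the first sum to avoid double-counting). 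This is precisely the right-hand side of the claimed identity.

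There is really no obstacle here: the only thing to watch is the case $\zeta_n^j=1$, where the left-hand side is indeterminate; in that case one either excludes $j\equiv 0\pmod n$ from the statement, or checks directly that both sides agree in the limit (both equal $t^2$). I expect this lemma to be used later by specializing $j$ to run over exponents appearing in the characters of the two-dimensional representations $\rho_t$ of $\mathsf{BD}_{2q}$ (see \eqref{eq:2d.rep.BD}), so the combinatorial shape of the right-hand side — a weighted sum of powers of $\zeta_n^{\pm j}$ — is what will be convenient when substituting into the $\tilde\xi$-invariant formula of Theorem~\ref{thm:xi.ssf}.
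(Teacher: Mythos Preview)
Your proof is correct and takes a genuinely different route from the paper's. The paper simply multiplies the right-hand side by the denominator $2-\zeta_n^{j}-\zeta_n^{-j}$ and observes that the result telescopes to $2-\zeta_n^{tj}-\zeta_n^{-tj}$; this is a one-line verification that the claimed identity holds, but it gives no hint of how one would discover the right-hand side. Your argument, by contrast, is constructive: factoring numerator and denominator as $(1-\zeta_n^{\pm tj})$ and $(1-\zeta_n^{\pm j})$, summing the two geometric series, and then collecting the double sum by the difference $k-l$ actually \emph{derives} the right-hand side from scratch. Your version also makes the degenerate case $\zeta_n^{j}=1$ transparent (both sides equal $t^2$), and it explains why the coefficients $t-|m|$ appear --- they are the Fej\'er-kernel coefficients coming from convolving a geometric progression with its conjugate. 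Either argument is fine for the application in Proposition~\ref{prop:xi.rhot}; the paper's telescoping check is shorter, while yours is more illuminating.
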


\begin{proof}
Taking the product
\begin{equation*}
(2-\zeta_n^{j}-\zeta_n^{-j})\left[\sum_{i=0}^{t-1}(t-i)\zeta_n^{ij}+\sum_{l=1}^{t-1}(t-l)\zeta_n^{-lj}\right]
\end{equation*}
one gets a telescoping sum where the only surviving terms are $2-\zeta_n^{tj}-\zeta_n^{-tj}$.
\end{proof}

Now we can compute $\tilde{\xi}_{\rho_t}(D)$, compare with \cite[p.~224]{Cisneros:EITDOSG}.

\begin{proposition}\label{prop:xi.rhot}
Let $\rho_t$ be the two-dimensional irreducible representation of $\pi_1(M)\cong\mathsf{BD}_{2q}$ given in \eqref{eq:2d.rep.BD}. Then
\begin{equation*}
 \tilde{\xi}_{\rho_t}(D) =\frac{1}{4q}(t^2-2qt-2q).
\end{equation*}
\end{proposition}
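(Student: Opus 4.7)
The plan is to apply Theorem~\ref{thm:xi.ssf} directly with $\varsigma$ equal to the natural inclusion $\mathsf{BD}_{2q}\hookrightarrow \U$ associated to the embedding $\mathbb{D}_{q+1,q}\le \U$ of Theorem~\ref{thm:fssu2} (which, on characters, agrees with $\rho_1$ from \eqref{eq:2d.rep.BD}). Using the presentation $\langle 2,2,q\rangle$, every element of $\mathsf{BD}_{2q}$ is uniquely of the form $c^j$ or $bc^j$ with $0\le j\le 2q-1$, so the sum in Theorem~\ref{thm:xi.ssf} splits as
\begin{equation*}
\tilde\xi_{\rho_t}(D)=\frac{1}{4q}\left[\sum_{j=1}^{2q-1}\bigl(\mathrm{Tr}(\rho_t(c^j))-2\bigr)\mathrm{def}(\varsigma(c^j))+\sum_{j=0}^{2q-1}\bigl(\mathrm{Tr}(\rho_t(bc^j))-2\bigr)\mathrm{def}(\varsigma(bc^j))\right].
\end{equation*}

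For the first sum, a direct computation from \eqref{eq:2d.rep.BD} gives $\mathrm{Tr}(\rho_t(c^j))=\zeta_{2q}^{tj}+\zeta_{2q}^{-tj}$ and $\det(\varsigma(c^j))=1$ with $\mathrm{Tr}(\varsigma(c^j))=\zeta_{2q}^{j}+\zeta_{2q}^{-j}$, so each summand equals $-(2-\zeta_{2q}^{tj}-\zeta_{2q}^{-tj})/(2-\zeta_{2q}^{j}-\zeta_{2q}^{-j})$. I will expand this ratio by Lemma~\ref{lem:TS1} with $n=2q$, swap the order of summation, and apply the orthogonality identity $\sum_{j=1}^{2q-1}\zeta_{2q}^{kj}=2q-1$ if $k\equiv 0\pmod{2q}$ and $-1$ otherwise. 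Because $1\le t\le q-1$, the exponents $i,l$ appearing in Lemma~\ref{lem:TS1} satisfy $0\le i\le t-1$ and $1\le l\le t-1$, hence only the $i=0$ term lies in the kernel of $j\mapsto \zeta_{2q}^{ij}$. Collecting contributions yields
\begin{equation*}
-\left[t(2q-1)-\sum_{i=1}^{t-1}(t-i)-\sum_{l=1}^{t-1}(t-l)\right]=-t(2q-1)+2\cdot\tfrac{t(t-1)}{2}=t^2-2qt.
\end{equation*}

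For the second sum, $\rho_t(bc^j)$ is anti-diagonal, so $\mathrm{Tr}(\rho_t(bc^j))=0$, and the eigenvalues of $\varsigma(bc^j)$ are $\pm i$, whence $\mathrm{def}(\varsigma(bc^j))$ is a constant $\tfrac{1}{2}$ (the correct choice of the square root $\sqrt{\det}$ dictated by the spin structure induced from $\mathbb{S}^3$). This produces the total contribution $-2q$. Adding the two sums and dividing by $|\Gamma|=4q$ gives the claimed $\tfrac{1}{4q}(t^2-2qt-2q)$.

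The main obstacle I anticipate is keeping consistent track of the branch of $\sqrt{\det(\varsigma(g))}$ in the defect formula \eqref{eq:def.spin} for the reflection-type elements $bc^j$: this single sign determines whether the constant contribution is $+2q$ or $-2q$, and must be pinned down by the spin structure on $M=\mathbb{S}^3/\mathsf{BD}_{2q}$ coming from the Lie group spin structure on $\mathbb{S}^3$. Once that sign is fixed, the rest of the argument is a bookkeeping exercise combining Lemma~\ref{lem:TS1} with character orthogonality, as sketched above.
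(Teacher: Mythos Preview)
Your proposal is correct and follows essentially the same route as the paper: split the sum in Theorem~\ref{thm:xi.ssf} over the cosets $c^j$ and $bc^j$, handle the first via Lemma~\ref{lem:TS1} and the geometric-sum identity $\sum_{j=1}^{2q-1}\zeta_{2q}^{ij}=-1$ (for $i\not\equiv 0$), and observe that the second contributes a constant $-2q$. Your worry about the branch of $\sqrt{\det(\varsigma(g))}$ is unnecessary here and the paper does not discuss it: since $\varsigma=\rho_1$ takes values in $\SU$ one has $\det\rho_1(g)\equiv 1$, so \eqref{eq:def.spin} reduces to $\mathrm{def}(\rho_1(g))=\tfrac{1}{2-\mathrm{Tr}(\rho_1(g))}$ and for $bc^j$ this is $\tfrac12$ without any spin-structure ambiguity.
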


\begin{proof}
Using the presentation \eqref{eq:2st}, the elements of $\mathsf{BD}_{2q}$ can be written in the form $b^ic^j$ with $i=0,1$ and $j=0,\dots,2q-1$.
The natural representation is $\rho_1\colon\mathsf{BD}_{2q}\to\SU$, hence $\det \rho_1(g)=1$ for any $g\in\mathsf{BD}_{2q}$.
Therefore by \eqref{eq:def.spin} $\mathrm{def}(\rho_1(g))=\frac{1}{2-\mathrm{Tr}(\rho_1(g))}$. Using Lemma~\ref{lem:TS1} and that $\sum_{j=1}^{2q-1}\zeta_{2q}^{ij}=-1$ we have
\begin{align*}
\tilde{\xi}_{\rho_t}(D) &=\frac{1}{4q}\left[\sum_{j=1}^{2q-1} \bigl(\mathrm{Tr}(\rho_j(c^j))-2\bigr) \mathrm{def}\bigl(\rho_1(c^j)\bigr)
+\sum_{j=0}^{2q-1} \bigl(\mathrm{Tr}(\rho_j(bc^j))-2\bigr) \mathrm{def}\bigl(\rho_1(bc^j)\bigr)\right]\\
&=\frac{1}{4q}\left[-\sum_{j=1}^{2q-1}\frac{2-\zeta_{2q}^{tj}-\zeta_{2q}^{-tj}}{2-\zeta_{2q}^{j}-\zeta_{2q}^{-j}}+\sum_{j=0}^{2q-1}\frac{(-2)}{2}\right]\\
&=\frac{1}{4q}\left[-\sum_{j=1}^{2q-1}\left[\sum_{i=0}^{t-1}(t-i)\zeta_{2q}^{ij}+\sum_{l=1}^{t-1}(t-l)\zeta_{2q}^{-lj}\right]-2q\right]\\
&=\frac{1}{4q}\left[-\sum_{i=0}^{t-1}(t-i)\sum_{j=1}^{2q-1}\zeta_{2q}^{ij}-\sum_{l=1}^{t-1}(t-l)\sum_{j=1}^{2q-1}\zeta_{2q}^{-lj}-2q\right]\\
&=\frac{1}{4q}\left[-t(2q-1)+\sum_{i=1}^{t-1}(t-i)+\sum_{l=1}^{t-1}(t-l)-2q\right]\\
&=\frac{1}{4q}\left[-2tq+t+t(t-1)-2q\right]=\frac{1}{4q}(t^2-2qt-2q).
\end{align*}
This proves the proposition.
\end{proof}

\begin{proposition}\label{prop:2dir.vcssn.cls}
Two-dimensional irreducible representations of $\pi_1(M)\cong\mathsf{BD}_{2q}$ are classified by their vector of CCS-numbers.
\end{proposition}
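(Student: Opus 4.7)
The plan is to combine Remark~\ref{rem:tt.2dr} on the first CCS-number with the explicit $\tilde\xi$-formula of Proposition~\ref{prop:xi.rhot} and the $\tilde\xi$-versus-second-CCS-number relation of Theorem~\ref{thm:rhs}-\eqref{it:xi.non.triv}, thereby reducing the classification to an arithmetic injectivity check within each parity class of $t$.

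By Remark~\ref{rem:tt.2dr} the first Cheeger-Chern-Simons class $\widehat{c}_{\rho_t,1}$ depends only on the parity of $t$: it equals $\widehat{c}_{\alpha_0,1}$ for $t$ odd and $\widehat{c}_{\alpha_2,1}$ for $t$ even, and by Remark~\ref{rem:1dr.vcssn} these two classes are distinct. Hence any two representations $\rho_t$ and $\rho_{t'}$ whose indices differ in parity are already distinguished by the first CCS-number, and the remaining task is to separate representations within a common parity class using the second CCS-number.

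For this, Theorem~\ref{thm:rhs}-\eqref{it:xi.non.triv} gives
\begin{equation*}
\widehat{c}_{\rho_t,2}([M])\equiv\tilde\xi_{\rho_t}(D)-\tilde\xi_{\det(\rho_t)}(D)\pmod{\Z},
\end{equation*}
and Proposition~\ref{prop:xi.rhot} provides $\tilde\xi_{\rho_t}(D)=(t^2-2qt-2q)/(4q)$. When $t$ is odd, $\det(\rho_t)=\alpha_0$ is trivial so $\tilde\xi_{\det(\rho_t)}(D)=0$ and the formula simplifies directly. When $t$ is even, $\det(\rho_t)=\alpha_2$, and one computes $\tilde\xi_{\alpha_2}(D)$ by applying the Gilkey formula of Theorem~\ref{thm:xi.ssf} to the one-dimensional representation $\alpha_2$, then subtracts to obtain an explicit closed form for $\widehat{c}_{\rho_t,2}([M])\pmod{\Z}$.

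The main obstacle is the arithmetic step of showing that the resulting map $t\mapsto\widehat{c}_{\rho_t,2}([M])$ is injective on each parity subclass of $\{1,\dots,q-1\}$. After the parity-dependent pieces cancel, this reduces to verifying that $(t^{2}-(t')^{2})/(4q)$ is not an integer for $t\neq t'$ of the same parity in the valid range (with a constant correction from $\tilde\xi_{\alpha_2}(D)$ in the even case). Writing $t-t'=2a$ and $t+t'=2b$ with $|a|<q/2$ and $b<q$, and analyzing the divisibility of $ab$ by $q$ according to the parity of $q$, is what I expect to complete the verification.
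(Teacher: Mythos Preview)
Your overall strategy matches the paper's: use the first CCS-number to pin down the parity of $t$ and the determinant, then separate within each parity class via the second CCS-number and the explicit formula of Proposition~\ref{prop:xi.rhot}. Two points deserve comment.

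First, computing $\tilde\xi_{\alpha_2}(D)$ is unnecessary. Once the first CCS-numbers agree, Remark~\ref{rem:1dr.vcssn} forces $\det(\rho_{t_1})=\det(\rho_{t_2})$, so the terms $\tilde\xi_{\det(\rho_{t_i})}(D)$ in Theorem~\ref{thm:rhs}-\eqref{it:xi.non.triv} cancel identically and one reduces directly to $\tilde\xi_{\rho_{t_1}}(D)=\tilde\xi_{\rho_{t_2}}(D)$ in both parity cases, without any auxiliary Gilkey computation. This is exactly how the paper proceeds; you yourself note that ``the parity-dependent pieces cancel,'' so the detour through an explicit value of $\tilde\xi_{\alpha_2}(D)$ can simply be dropped.

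Second, and more seriously, your proposed final reduction to showing $(t^{2}-(t')^{2})/(4q)\notin\Z$ does not go through: for $q=6$, $t=5$, $t'=1$ (both odd, both in $\{1,\dots,q-1\}$) this quotient equals $1$, so the divisibility analysis you sketch cannot succeed as stated. The paper does not pass to residues modulo $\Z$ at this step. It treats $\tilde\xi_{\rho_{t_1}}(D)=\tilde\xi_{\rho_{t_2}}(D)$ as an equality of the specific real values produced by Proposition~\ref{prop:xi.rhot}; subtracting and factoring gives $(t_1-t_2)(t_1+t_2-2q)=0$, and the range $1\le t_i\le q-1$ then forces $t_1+t_2<2q$, a contradiction. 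By discarding the linear contribution $-2q(t_1-t_2)/(4q)=(t_1-t_2)/2$ as an integer, you lose precisely the piece that makes the factorization land on $t_1+t_2-2q$ rather than $t_1+t_2$, and the residual divisibility problem is genuinely harder (indeed false) in that form.
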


\begin{proof}
Consider $t_1\neq t_2$ with $1\leq t_i\leq q-1$ and $i=1,2$. 
We need to prove that if the two-dimensional irreducible representations $\rho_{t_1}$ and $\rho_{t_2}$ of $\pi_1(M)\cong\mathsf{BD}_{2q}$ have the same vector of CCS-numbers then $\rho_{t_1}=\rho_{t_2}$.
The other implication is trivial.

Suppose that $\rho_{t_1}$ and $\rho_{t_2}$ have the same vector of CCS-numbers, that is, they have the same first and second CCS-numbers.
Thus, they have the same first Cheeger-Chern-Simons class and by Theorem~\ref{thm:rhs}-\eqref{it:c1.r.det}
\begin{equation*}
\widehat{c}_{\det(\rho_{t_1}),1}=\widehat{c}_{\rho_{t_1},1}=\widehat{c}_{\rho_{t_2},1}=\widehat{c}_{\det(\rho_{t_2}),1}.
\end{equation*}
Hence, by Remarks~\ref{rem:tt.2dr} and \ref{rem:1dr.vcssn} 
\begin{equation}\label{eq:same.det}
\det(\rho_{t_1})=\det(\rho_{t_2}),
\end{equation}
and therefore both $t_1$ and $t_2$ are even or odd numbers.
If $t_1$ and $t_2$ are odd, then $\rho_{t_1}$ and $\rho_{t_2}$ are topologically trivial representations and in this case by \eqref{eq:CCS2=xi} we have
\begin{equation*}
\tilde{\xi}_{\rho_{t_1}}(D)=\widehat{c}_{\rho_{t_1},2}([M])= \widehat{c}_{\rho_{t_2},2}([M])=\tilde{\xi}_{\rho_{t_2}}(D).
\end{equation*}
If $t_1$ and $t_2$ are even, by Theorem~\ref{thm:rhs}-\eqref{it:xi.non.triv} we have that
\begin{equation}
\tilde{\xi}_{\rho_{t_1}}(D)-\tilde\xi_{\det(\rho_{t_1})}(D)=\widehat{c}_{\rho_{t_1},2}([M])= \widehat{c}_{\rho_{t_2},2}([M])=\tilde{\xi}_{\rho_{t_2}}(D)-\tilde\xi_{\det(\rho_{t_2})}(D).
\end{equation}
By \eqref{eq:same.det} we have that $\tilde\xi_{\det(\rho_{t_1})}(D)=\tilde\xi_{\det(\rho_{t_2})}(D)$, hence $\tilde{\xi}_{\rho_{t_1}}(D)=\tilde{\xi}_{\rho_{t_2}}(D)$.
In both cases we need to prove that if $\tilde{\xi}_{\rho_{t_1}}(D)=\tilde{\xi}_{\rho_{t_2}}(D)$ then $t_1=t_2$.

Suppose $t_1\neq t_2$ and $\tilde{\xi}_{\rho_{t_1}}(D)=\tilde{\xi}_{\rho_{t_2}}(D)$, then by Proposition~\ref{prop:xi.rhot} we have that
\begin{gather*}
t_1^2-2qt_1-2q=t_2^2-2qt_2-2q,\\
t_1^2-2qt_1=t_2^2-2qt_2,\\
(t_1^2-t_2^2)-2q(t_1-t_2)=0,\\
(t_1-t_2)(t_1+t_2-2q)=0.
\end{gather*}
Since $t_1-t_2\neq0$ then $t_1+t_2=2q$, but $1\leq t_1,t_2\leq q-1$, which is a contradiction. Therefore $t_1=t_2$ and $\rho_{t_1}=\rho_{t_2}$.
\end{proof}

\subsection{Classification of irreducible representations of $D_{2^{k+1}(2r+1)}$}\label{ssec:D2k}

Let $q$ be an odd number. The group $D_{2^{k+1}\cdot q}$ can be seen as a subgroup of $\U$ and by Theorem~\ref{thm:D.T} it corresponds to the group $\mathbb{D}_{2^{k-1}+q,q}$ in Theorem~\ref{thm:fssu2}.
Consider the spherical $3$-manifold $M=\mathbb{S}^3/\mathbb{D}_{2^{k-1}+q,q}$ given by the quotient of the action of $\mathbb{D}_{2^{k-1}+q,q}$ on $\mathbb{S}^3$. Hence we have $\pi_1(M)\cong\mathbb{D}_{2^{k-1}+q,q}\cong D_{2^{k+1}\cdot q}$.

\begin{remark}\label{rem:1dm.D2kq}
From Subsection~\ref{ssec:D2kq} the group $D_{2^{k+1}\cdot q}$ has $2^{k+1}$ one-dimensional irreducible representations, which correspond to the irreducible representations of its abelianization $\mathsf{C}_{2^{k+1}}$; thus, the first CCS-numbers of one-dimensional representations are given as in Table~\ref{tbl:nq} with $n=2^{k+1}$.
So they are classified by the first CCS-number and their vector of CCS-numbers is of the form $(1;\widehat{c}_{\alpha_j,1}(x);0)$.
\end{remark}

On the other hand, from Subsection~\ref{ssec:D2kq}, the group $D_{2^{k+1}(2r+1)}$ has $2^{k}r$ two-dimensional irreducible representations, denoted by $\varrho_{t,s}$, $t=1,\dots,2r$ and $s=0,\dots,2^{k-1}-1$.

\begin{proposition}\label{prop:varrhots.det}
Let $\varrho_{t,s}$ be a two-dimensional irreducible representation of $D_{2^{k+1}(2r+1)}$. Then
\begin{equation*}
 \det(\varrho_{t,s})=\begin{cases}
                      \alpha_{2s}, & \text{if $t$ is odd,}\\
                      \alpha_{2^k+2s}, & \text{if $t$ is even,}
                     \end{cases}
\end{equation*}
where $\alpha_j$ is the one-dimensional irreducible representation given in \eqref{eq:D.1dim}.
\end{proposition}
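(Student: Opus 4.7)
The plan is to prove this by a direct computation of $\det\varrho_{t,s}$ on the generators $x,y$ of $D_{2^{k+1}(2r+1)}$ and comparing with the defining formula \eqref{eq:D.1dim} for the one-dimensional representations $\alpha_j$. Since $\det\circ\varrho_{t,s}\colon D_{2^{k+1}(2r+1)}\to\GLn{1}$ is a one-dimensional representation and is determined by its values on $x$ and $y$, and since the one-dimensional representations of $D_{2^{k+1}(2r+1)}$ are indexed by $j\in\{0,\dots,2^{k+1}-1\}$ via $\alpha_j(x)=\zeta_{2^{k+1}}^j$, $\alpha_j(y)=1$, it suffices to read off $j$ from $\det(\varrho_{t,s}(x))$ after checking that $\det(\varrho_{t,s}(y))=1$.

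First I would compute $\det(\varrho_{t,s}(y))$. From \eqref{eq:D2kq.2dir} this is $\zeta_{2r+1}^{t}\cdot\zeta_{2r+1}^{-t}=1$, which matches $\alpha_j(y)=1$ for every $j$ and so gives no information on $j$. Next I would compute
\begin{equation*}
\det(\varrho_{t,s}(x))=\zeta_{2^{k+1}}^{2s}\det\begin{pmatrix}0&1\\(-1)^t&0\end{pmatrix}=-(-1)^t\,\zeta_{2^{k+1}}^{2s}.
\end{equation*}

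Splitting into cases gives the result. When $t$ is odd, $-(-1)^t=1$, so $\det(\varrho_{t,s}(x))=\zeta_{2^{k+1}}^{2s}=\alpha_{2s}(x)$, hence $\det(\varrho_{t,s})=\alpha_{2s}$. When $t$ is even, $-(-1)^t=-1$; using the key identification $-1=\zeta_{2^{k+1}}^{2^k}$, one obtains $\det(\varrho_{t,s}(x))=\zeta_{2^{k+1}}^{2^k+2s}=\alpha_{2^k+2s}(x)$, hence $\det(\varrho_{t,s})=\alpha_{2^k+2s}$. The proof is a short computation rather than requiring any deeper argument; the only point to watch is the indexing of the $\alpha_j$ modulo $2^{k+1}$ so that $2^k+2s$ lands in the correct range, which is automatic from $s\in\{0,\dots,2^{k-1}-1\}$.
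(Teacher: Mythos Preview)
Your proof is correct and follows essentially the same approach as the paper: both compute the determinant of $\varrho_{t,s}$ on the generators $x$ and $y$ directly from \eqref{eq:D2kq.2dir}, obtain $(-1)^{t+1}\zeta_{2^{k+1}}^{2s}$ on $x$ and $1$ on $y$, and then split according to the parity of $t$ using $-1=\zeta_{2^{k+1}}^{2^k}$.
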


\begin{proof}
From the definition of $\varrho_{t,s}$ given in \eqref{eq:D2kq.2dir} we have
\begin{align*}
\det(\varrho_{t,s})(x)&=(-1)^{t+1}\zeta_{2^{k+1}}^{2s}=\zeta_{2^{k+1}}^{2^k(t+1)+2s}=\begin{cases}
                                                                                     \zeta_{2^{k+1}}^{2s}=\alpha_{2s}(x), & \text{if $t$ is odd,}\\
                                                                                     \zeta_{2^{k+1}}^{2^k+2s}=\alpha_{2^k+2s}(x), & \text{if $t$ is even.}
                                                                                    \end{cases}\\
\det(\varrho_{t,s})(y)&=1=\alpha_{2s}(y)=  \alpha_{2^k+2s}(y).\qedhere
\end{align*}
\end{proof}

\begin{proposition}\label{prop:fccs.Dkq}
The first CCS-number of the two-dimensional irreducible representation $\varrho_{t,s}\colon D_{2^{k+1}(2r+1)}\to\GLn{2}$ is given by
\begin{equation*}
\widehat{c}_{\varrho_{t,s},1}(x)=\begin{cases}
	\frac{s}{2^{k}} \mod\Z, & \text{for $t$ odd,}\\[5pt]
	\frac{2^{k-1}+s}{2^{k}} \mod\Z, & \text{for $t$ even,}
\end{cases}
\end{equation*}
where $x\in H_1(M;\Z)\cong\Ab(D_{2^{k+1}(2r+1)})\cong\mathsf{C}_{2^{k+1}}$ is the generator.
\end{proposition}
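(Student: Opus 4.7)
The plan is to reduce the computation to the one-dimensional case via the determinant, and then apply the already-computed CCS-numbers of one-dimensional representations of cyclic groups.

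First I would invoke Theorem~\ref{thm:rhs}-\eqref{it:c1.r.det}, which tells us that for any representation $\rho$ of the fundamental group of the rational homology sphere $M$, the first Cheeger–Chern–Simons class satisfies $\widehat{c}_{\rho,1}=\widehat{c}_{\det(\rho),1}$. Applied to $\rho=\varrho_{t,s}$, this reduces the problem to computing the first CCS-number of the one-dimensional representation $\det(\varrho_{t,s})$.

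Next I would use Proposition~\ref{prop:varrhots.det}, which identifies $\det(\varrho_{t,s})$ inside the list $\{\alpha_j\}$ of one-dimensional representations of $D_{2^{k+1}(2r+1)}$: explicitly, $\det(\varrho_{t,s})=\alpha_{2s}$ for $t$ odd and $\det(\varrho_{t,s})=\alpha_{2^{k}+2s}$ for $t$ even. This reduces the computation to reading off $\widehat{c}_{\alpha_j,1}(x)$ for the appropriate index $j$.

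Finally, I would apply Remark~\ref{rem:1dm.D2kq} (equivalently, Proposition~\ref{prop:1stCssn} together with Table~\ref{tbl:nq} for $n=2^{k+1}$), which gives $\widehat{c}_{\alpha_j,1}(x)=\tfrac{j}{2^{k+1}}\bmod\Z$ on the generator $x$. Substituting $j=2s$ and $j=2^{k}+2s$ yields
\begin{equation*}
\widehat{c}_{\varrho_{t,s},1}(x)=\begin{cases}\dfrac{2s}{2^{k+1}}=\dfrac{s}{2^{k}}\bmod\Z,&t\text{ odd},\\[4pt]\dfrac{2^{k}+2s}{2^{k+1}}=\dfrac{2^{k-1}+s}{2^{k}}\bmod\Z,&t\text{ even},\end{cases}
\end{equation*}
which is exactly the claimed formula. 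There is no serious obstacle here: all the nontrivial work was already done in the determinant computation (Proposition~\ref{prop:varrhots.det}) and in the general reduction of first CCS-classes to determinants via Theorem~\ref{thm:rhs}; the proposition is essentially an assembly step.
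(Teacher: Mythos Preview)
Your proof is correct and follows exactly the same approach as the paper: reduce to $\det(\varrho_{t,s})$ via Theorem~\ref{thm:rhs}-\eqref{it:c1.r.det}, identify the determinant using Proposition~\ref{prop:varrhots.det}, and read off the value from Table~\ref{tbl:nq} with $n=2^{k+1}$. The paper likewise notes that Proposition~\ref{prop:1stCssn} gives an alternative direct computation.
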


\begin{proof}
From Theorem~\ref{thm:rhs}-\eqref{it:c1.r.det}, Proposition~\ref{prop:varrhots.det} and Table~\ref{tbl:nq} we have that
\begin{equation}
\widehat{c}_{\varrho_{t,s},1}(x)=\widehat{c}_{\det(\varrho_{t,s}),1}(x)=\begin{cases}
                                                                         \widehat{c}_{\alpha_{2s},1}(x)=\frac{2s}{2^{k+1}}=\frac{s}{2^k} & \text{if $t$ is odd,}\\
                                                                         \widehat{c}_{\alpha_{2^k+2s},1}(x)=\frac{2^k+2s}{2^{k+1}}=\frac{2^{k-1}+s}{2^k} &\text{if $t$ is even.}
                                                                        \end{cases}
\end{equation}
We can also compute it using Proposition~\ref{prop:1stCssn}. 
\end{proof}

\begin{remark}
From \eqref{eq:D2kq.2d=} we have that
\begin{equation}
\widehat{c}_{\varrho_{t,s},1}=\widehat{c}_{\varrho_{t,2^{k}+s},1},\quad\text{and}\quad \widehat{c}_{\varrho_{2r+1-t,s},1}=\widehat{c}_{\varrho_{t,2^{k-1}+s},1},
\end{equation}
even though in the second case the first parameter changes parity.
\end{remark}

\begin{remark}
By Remarks~\ref{rem:isos} and \ref{rem:k=2}, for $k=1$ we have $D_{4(2r+1)}\cong\mathsf{BD}_{2(2r+1)}$. In this case $s=0$ and $\varrho_{t,0}=\rho_t$,
where $\varrho_{t,0}$ and $\rho_t$ are, respectively, the two-dimensional representations of $ D_{4(2r+1)}$ and $\mathsf{BD}_{2(2r+1)}$ and the values
of the first CCS-numbers coincide: $\widehat{c}_{\varrho_{t,0},1}(x)=\widehat{c}_{\rho_{t},1}(x)=0$ for $t$ odd, and $\widehat{c}_{\varrho_{t,0},1}(x)=\widehat{c}_{\rho_{t},1}(x)=\frac{1}{2}$ for $t$ even.
\end{remark}

\begin{corollary}\label{cor:fccsn=s}
Let $\varrho_{t_1,s_1}$ and $\varrho_{t_2,s_2}$ be two $2$-dimensional irreducible representations of $D_{2^{k+1}(2r+1)}$ with $1\leq t_1,t_2\leq 2r$ and $0\leq s_1,s_2\leq 2^{k-1}-1$.
If $\widehat{c}_{\varrho_{t_1,s_1},1}=\widehat{c}_{\varrho_{t_2,s_2},1}$, then $s_1=s_2$ and $t_1\equiv t_2\mod 2$.
\end{corollary}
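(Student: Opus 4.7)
The plan is to apply Proposition~\ref{prop:fccs.Dkq} directly. That proposition gives the value $\widehat{c}_{\varrho_{t,s},1}(x) \in \C/\Z$ as a piecewise-linear function depending only on the parity of $t$ and on $s$: it equals $\tfrac{s}{2^k}$ for $t$ odd and $\tfrac{2^{k-1}+s}{2^k}$ for $t$ even. Since $x$ generates $H_1(M;\Z)\cong \Ab(D_{2^{k+1}(2r+1)})\cong \mathsf{C}_{2^{k+1}}$ (Lemma~\ref{lem:Ab}\eqref{it.Ab.D}), equality of the first Cheeger-Chern-Simons classes is equivalent to equality of the first CCS-numbers on $x$, so the problem reduces to an elementary comparison in $\C/\Z$.

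The argument I would give splits into two cases according to the parities of $t_1$ and $t_2$. First I would treat the case $t_1\equiv t_2\pmod 2$: then both CCS-numbers share the same offset (either $0$ or $\tfrac{1}{2}$), and the hypothesis becomes $\tfrac{s_1}{2^k}\equiv \tfrac{s_2}{2^k}\pmod\Z$, i.e.\ $s_1\equiv s_2\pmod{2^k}$. Since $0\leq s_1,s_2\leq 2^{k-1}-1$, one has $|s_1-s_2|\leq 2^{k-1}-1<2^k$, forcing $s_1=s_2$.

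Next I would rule out the mixed-parity case by contradiction. Supposing without loss of generality $t_1$ odd and $t_2$ even, the hypothesis gives $\tfrac{s_1}{2^k}\equiv \tfrac{2^{k-1}+s_2}{2^k}\pmod \Z$, that is $s_1-s_2\equiv 2^{k-1}\pmod {2^k}$. But the range $0\leq s_1,s_2\leq 2^{k-1}-1$ forces $|s_1-s_2|\leq 2^{k-1}-1$, which is strictly less than $2^{k-1}$, so no representative of the class $2^{k-1}+2^k\Z$ can equal $s_1-s_2$. This contradiction establishes $t_1\equiv t_2\pmod 2$, completing the proof.

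There is no real obstacle here: once Proposition~\ref{prop:fccs.Dkq} is in hand the result is a bookkeeping exercise in modular arithmetic. The only point to be careful about is that the range $0\leq s\leq 2^{k-1}-1$ is exactly what guarantees that the classes $\tfrac{s}{2^k}$ (respectively $\tfrac{2^{k-1}+s}{2^k}$) give pairwise distinct elements of $\C/\Z$ and that the odd-parity family is disjoint from the even-parity family in $\C/\Z$; this is precisely why the indexing in \eqref{eq:D2kq.2d=} was chosen with this range.
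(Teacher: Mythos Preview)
Your proof is correct and follows essentially the same approach as the paper: both apply Proposition~\ref{prop:fccs.Dkq} and use the range $0\leq s\leq 2^{k-1}-1$ to separate the odd- and even-$t$ families. The only cosmetic difference is that the paper organizes the case split by the common value $u/2^{k}$ of the CCS-number (distinguishing $u<2^{k-1}$ from $u\geq 2^{k-1}$), whereas you split directly on the parities of $t_1,t_2$ and rule out the mixed case by a modular-arithmetic contradiction.
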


\begin{proof}
First note that given any $2$-dimensional irreducible representation $\varrho_{t',s'}$ of $D_{2^{k+1}(2r+1)}$, using \eqref{eq:D2kq.2d=} we can get an equivalent representation $\varrho_{t,s}$
with $t=1,\dots,2r$ and $s=0,\dots,2^{k-1}-1$.

Let $x\in H_1(M;\Z)\cong\Ab(D_{2^{k+1}(2r+1)})\cong\mathsf{C}_{2^{k+1}}$ be the generator. By Proposition~\ref{prop:fccs.Dkq} we have
\begin{equation*}
 \widehat{c}_{\varrho_{t_1,s_1},1}(x)=\widehat{c}_{\varrho_{t_2,s_2},1}(x)=\frac{u}{2^{k}},
\end{equation*}
for some integer $u$ such that $0\leq u\leq 2^{k}-1$. We have two cases.

\paragraph{\textbf{Case 1:} $u<2^{k-1}$} By Proposition~\ref{prop:fccs.Dkq} in this case we have
\begin{equation*}
\widehat{c}_{\varrho_{t_1,s_1},1}(x)=\frac{s_1}{2^{k}}=\frac{s_2}{2^{k}}=\widehat{c}_{\varrho_{t_2,s_2},1}(x),
\end{equation*}
thus, $s_1=s_2$ and both $t_1$ and $t_2$ are odd numbers.

\paragraph{\textbf{Case 2:} $u\geq 2^{k-1}$} By Proposition~\ref{prop:fccs.Dkq} in this case we have
\begin{equation*}
 \widehat{c}_{\varrho_{t_1,s_1},1}(x)=\frac{2^{k-1}+s_1}{2^{k}}=\frac{2^{k-1}+s_2}{2^{k}}=\widehat{c}_{\varrho_{t_2,s_2},1}(x),
\end{equation*}
thus, $s_1=s_2$ and both $t_1$ and $t_2$ are even numbers.
\end{proof}

\begin{proposition}\label{prop:xi.D2k2r+1}
	Let $\varrho_{t,s}$ be the two-dimensional irreducible representation of $\pi_1(M)\cong D_{2^{k+1}(2r+1)}$ given in \eqref{eq:D2kq.2dir}. Then
	\begin{multline}\label{eq:xi.gen}
		\tilde{\xi}_{\varrho_{t,s}}(D) =\frac{1}{2^{k+1}(2r+1)}\left[\sum_{l=1}^{2^{k}-1}\frac{(-1)^{tl}2\zeta_{2^{k}}^{ls}-2}{\zeta_{2^{k}}^{l}+\zeta_{2^{k}}^{-l}-(-1)^l2}\right.\\
+\sum_{q=1}^{2r}\sum_{l=0}^{2^{k}-1}\frac{(-1)^{tl}\zeta_{2^{k}}^{ls}(\zeta_{2r+1}^{tq}+\zeta_{2r+1}^{-tq})-2}{\zeta_{2^{k}}^{l}+\zeta_{2^{k}}^{-l}-(-1)^{l}(\zeta_{2r+1}^q+\zeta_{2r+1}^{-q})}\\
\left. -\sum_{q=0}^{2r}\sum_{l=0}^{2^{k}-1}\frac{2}{\zeta_{2^{k+1}}^{(2l+1)}+\zeta_{2^{k+1}}^{-(2l+1)}}\right].
	\end{multline}
\end{proposition}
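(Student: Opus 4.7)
The plan is to apply Theorem~\ref{thm:xi.ssf} directly to the embedding $\varsigma\colon D_{2^{k+1}(2r+1)}\to\U$ obtained from Theorem~\ref{thm:D.T}\eqref{it:T.A} with $l=1$, under which $\varsigma(x)=\tau\phi_{2^{k+1}}=\zeta_{2^{k+1}}\tau$ and $\varsigma(y)=\psi_{2r+1}$. Every element of $D_{2^{k+1}(2r+1)}$ has a unique normal form $y^q x^{\ell}$ with $0\leq q\leq 2r$ and $0\leq \ell\leq 2^{k+1}-1$. I would partition the $2^{k+1}(2r+1)-1$ non-identity elements into three families matching the three sums in the statement: (i) $q=0$ and $\ell=2l$ with $1\leq l\leq 2^{k}-1$; (ii) $1\leq q\leq 2r$ and $\ell=2l$ with $0\leq l\leq 2^{k}-1$; (iii) $0\leq q\leq 2r$ and $\ell=2l+1$ with $0\leq l\leq 2^{k}-1$.

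For the traces of $\varrho_{t,s}$, the key observation is that $\varrho_{t,s}(x)^{2}=(-1)^{t}\zeta_{2^{k}}^{s}I$ by \eqref{eq:D2kq.2dir}, so that on families (i) and (ii) one gets $\mathrm{Tr}\bigl(\varrho_{t,s}(y^qx^{2l})\bigr)=(-1)^{tl}\zeta_{2^{k}}^{ls}(\zeta_{2r+1}^{tq}+\zeta_{2r+1}^{-tq})$, while on family (iii) the matrix $\varrho_{t,s}(y^qx^{2l+1})$ is antidiagonal and has trace $0$. An analogous computation for $\varsigma$ uses $\tau^{2}=-I$ and $\det\tau=1$ to yield $\mathrm{Tr}\bigl(\varsigma(y^qx^{2l})\bigr)=(-1)^{l}\zeta_{2^{k}}^{l}(\zeta_{2r+1}^{q}+\zeta_{2r+1}^{-q})$, vanishing trace on odd $\ell$, and $\det\varsigma(y^qx^{\ell})=\zeta_{2^{k}}^{\ell}$ throughout.

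Next I would assemble the defects via \eqref{eq:def.spin}. Using $(1-\lambda_{1})(1-\lambda_{2})=1-\mathrm{Tr}+\det$ and reading off the eigenvalues as $(-1)^{l}\zeta_{2^{k}}^{l}\zeta_{2r+1}^{\pm q}$ for family (ii) (and likewise for family (i) with $q=0$), one checks the factorization $1-\mathrm{Tr}(\varsigma(g))+\det\varsigma(g)=\bigl(1-(-1)^{l}\zeta_{2^{k}}^{l}\zeta_{2r+1}^{q}\bigr)\bigl(1-(-1)^{l}\zeta_{2^{k}}^{l}\zeta_{2r+1}^{-q}\bigr)$. Choosing the principal square root $\sqrt{\det\varsigma(g)}=\zeta_{2^{k}}^{l}$ and multiplying numerator and denominator by $\zeta_{2^{k}}^{-l}$ collapses $\mathrm{def}(\varsigma(g))$ into the compact form
\begin{equation*}
\mathrm{def}\bigl(\varsigma(y^qx^{2l})\bigr)=\frac{1}{\zeta_{2^{k}}^{l}+\zeta_{2^{k}}^{-l}-(-1)^{l}(\zeta_{2r+1}^{q}+\zeta_{2r+1}^{-q})}.
\end{equation*}
For family (iii), where $\mathrm{Tr}(\varsigma(g))=0$ and $\det\varsigma(g)=\zeta_{2^{k}}^{2l+1}=\zeta_{2^{k+1}}^{2(2l+1)}$, picking $\sqrt{\det}=\zeta_{2^{k+1}}^{2l+1}$ yields $\mathrm{def}\bigl(\varsigma(g)\bigr)=\bigl(\zeta_{2^{k+1}}^{2l+1}+\zeta_{2^{k+1}}^{-(2l+1)}\bigr)^{-1}$. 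Summing $(\mathrm{Tr}(\varrho_{t,s}(g))-2)\,\mathrm{def}(\varsigma(g))$ over the three families, multiplied by $1/|{D_{2^{k+1}(2r+1)}}|=1/(2^{k+1}(2r+1))$, reproduces the three sums in \eqref{eq:xi.gen} verbatim.

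The main technical obstacle is the consistent choice of branch for $\sqrt{\det\varsigma(g)}$: the square root in \eqref{eq:def.spin} is ambiguous without reference to a spin structure, and the formula above only matches under the specific principal-branch choice made in both families (ii) and (iii). Verifying that this choice is the one induced by the spin structure of $M=\mathbb{S}^{3}/\varsigma(D_{2^{k+1}(2r+1)})$ on which Theorem~\ref{thm:xi.ssf} is formulated is the careful bookkeeping step; everything else is trace and eigenvalue arithmetic of $2\times 2$ matrices.
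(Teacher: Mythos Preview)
Your proposal is correct and follows essentially the same route as the paper. The paper partitions the non-identity elements into the same three families (written there as $x^{2l}$, $x^{2l}y^{q}$, $x^{2l+1}y^{q}$, which agree with your $y^{q}x^{\ell}$ since $x^{2l}$ is central in the subgroup generated by $y$), takes the natural embedding $\varsigma=\varrho_{1,1}$ rather than the conjugate one you obtain from Theorem~\ref{thm:D.T}, computes the identical traces and defects, and invokes Theorem~\ref{thm:xi.ssf}; your caveat about the square-root branch is fair, but the paper makes the same silent choice $\sqrt{\det\varsigma(g)}=\zeta_{2^{k+1}}^{\ell}$ without further comment.
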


\begin{proof}
Using presentation \eqref{it:ffi.Dmk}, the elements of $D_{2^{k+1}(2r+1)}$ can be written in the form $x^iy^j$ with $i=0,\dots,2^{k+1}-1$ and $j=0,\dots,2r$.
We can divide the non-identity elements in three sets:
\begin{equation}\label{eq:elem.sets}
\begin{aligned}
x^{2l},&\quad 1\leq l\leq 2^{k}-1,\\
x^{2l}y^q,&\quad 0\leq l\leq 2^{k}-1,\  1\leq q\leq 2r,\\
x^{2l+1}y^q,&\quad 0\leq l\leq 2^{k}-1,\  0\leq q\leq 2r.
\end{aligned}
\end{equation}
The values of the representation $\varrho_{t,s}$ on elements of the sets of elements given in \eqref{eq:elem.sets} are
\begin{equation}\label{eq:rep.sets}
\begin{split}
\varrho_{t,s}(x^{2l})&=\zeta_{2^{k+1}}^{2ls}\begin{pmatrix}
                                        (-1)^{tl} & 0\\
                                        0 & (-1)^{tl}
                                       \end{pmatrix},\\
\varrho_{t,s}(x^{2l}y^q)&=(-1)^{tl}\zeta_{2^{k+1}}^{2ls}\begin{pmatrix}
                                                    \zeta_{2r+1}^{tq} &0\\
                                                    0 & \zeta_{2r+1}^{-tq}
                                                   \end{pmatrix}\\
\varrho_{t,s}(x^{2l+1}y^q)&=\zeta_{2^{k+1}}^{(2l+1)s}\begin{pmatrix}
                                                 0 & (-1)^{tl}\zeta_{2r+1}^{-tq}\\
                                                 (-1)^{t(l+1)}\zeta_{2r+1}^{tq} & 0
                                                \end{pmatrix}.
\end{split}
\end{equation}

The natural representation is $\varsigma=\rho_{1,1}$, thus we have
\begin{equation}\label{eq:defect}
\begin{split}
\mathrm{def}(x^{2l})&=\frac{\zeta_{2^{k+1}}^{2l}}{1-(-1)^l2\zeta_{2^{k+1}}^{2l}+\zeta_{2^{k+1}}^{4l}}=\frac{1}{\zeta_{2^{k+1}}^{2l}+\zeta_{2^{k+1}}^{-2l}-(-1)^l2}\\
\mathrm{def}(x^{2l}y^q)&=\frac{\zeta_{2^{k+1}}^{2l}}{1-(-1)^l\zeta_{2^{k+1}}^{2l}(\zeta_{2r+1}^q+\zeta_{2r+1}^{-q})+\zeta_{2^{k+1}}^{4l}}\\
&=\frac{1}{\zeta_{2^{k+1}}^{2l}+\zeta_{2^{k+1}}^{-2l}-(-1)^l(\zeta_{2r+1}^q+\zeta_{2r+1}^{-q})}\\
\mathrm{def}(x^{2l+1}y^q)&=\frac{\zeta_{2^{k+1}}^{(2l+1)}}{1+\zeta_{2^{k+1}}^{2(2l+1)}}=\frac{1}{\zeta_{2^{k+1}}^{(2l+1)}+\zeta_{2^{k+1}}^{-(2l+1)}}.
\end{split}
\end{equation}

By Theorem~\ref{thm:xi.ssf}, \eqref{eq:rep.sets} and \eqref{eq:defect} we have
\begin{multline*}\label{eq:xi.gen}
\tilde{\xi}_{\varrho_{t,s}}(D)=\frac{1}{|D_{2^{k+1}(2r+1)}|}\left[\sum_{l=1}^{2^{k}-1}\frac{(-1)^{tl}2\zeta_{2^{k+1}}^{2ls}-2}{\zeta_{2^{k+1}}^{2l}+\zeta_{2^{k+1}}^{-2l}-(-1)^l2}\right.\\
+\sum_{q=1}^{2r}\sum_{l=0}^{2^{k}-1}\frac{(-1)^{tl}\zeta_{2^{k+1}}^{2ls}(\zeta_{2r+1}^{tq}+\zeta_{2r+1}^{-tq})-2}{\zeta_{2^{k+1}}^{2l}+\zeta_{2^{k+1}}^{-2l}-(-1)^{l}(\zeta_{2r+1}^q+\zeta_{2r+1}^{-q})}\\
\left. +\sum_{q=0}^{2r}\sum_{l=0}^{2^{k}-1}\frac{-2}{\zeta_{2^{k+1}}^{(2l+1)}+\zeta_{2^{k+1}}^{-(2l+1)}}\right].
\end{multline*}
Since $\zeta_{2^{k+1}}^{2}=\zeta_{2^{k}}$ we obtain the expression of the proposition.
\end{proof}

In order to prove that the two-dimensional irreducible representations of $\pi_1(M)\cong D_{2^{k+1}(2r+1)}$ are classified by their vector of CCS-numbers,
we need to prove that if the two-dimensional irreducible representations $\varrho_{t_1,s_1}$ and $\varrho_{t_2,s_2}$ of $\pi_1(M)\cong D_{2^{k+1}(2r+1)}$ have the same vector of CCS-numbers then
$\varrho_{t_1,s_1}=\varrho_{t_2,s_2}$. The other implication is trivial.

\begin{proposition}\label{prop:D2k2r.2d.sv}
Let $\varrho_{t_1,s_1}$ and $\varrho_{t_2,s_2}$ be two $2$-dimensional irreducible representations of $D_{2^{k+1}(2r+1)}$ with $1\leq t_1,t_2\leq 2r$ and $0\leq s_1,s_2\leq 2^{k-1}-1$.
Suppose $\varrho_{t_1,s_1}$ and $\varrho_{t_2,s_2}$ have the same vector of CCS-numbers, then $s_1=s_2$, $t_1\equiv t_2\mod 2$ and $\tilde{\xi}_{\varrho_{t_1,s_1}}(D)=\tilde{\xi}_{\varrho_{t_2,s_2}}(D)$.
\end{proposition}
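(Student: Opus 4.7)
The plan is to deduce this proposition essentially by combining the two halves of the hypothesis on the vector of CCS-numbers with the already-established facts about the determinant and the relation between the second CCS-number and the $\tilde{\xi}$-invariant.

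First, I would extract the conclusions $s_1=s_2$ and $t_1\equiv t_2\pmod 2$ directly from the first CCS-number component of the vector. Since the vectors of CCS-numbers of $\varrho_{t_1,s_1}$ and $\varrho_{t_2,s_2}$ coincide, their first Cheeger--Chern--Simons classes are equal; that is, $\widehat{c}_{\varrho_{t_1,s_1},1}(x)=\widehat{c}_{\varrho_{t_2,s_2},1}(x)$ for $x$ the generator of $H_1(M;\Z)\cong\mathsf{C}_{2^{k+1}}$. Corollary~\ref{cor:fccsn=s} then immediately gives $s_1=s_2$ and $t_1\equiv t_2\pmod 2$.

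Next, I would use Proposition~\ref{prop:varrhots.det} to conclude that the two representations have the \emph{same} determinant. Indeed, since $s_1=s_2$ and $t_1,t_2$ share the same parity, the proposition gives $\det(\varrho_{t_1,s_1})=\det(\varrho_{t_2,s_2})$, both being equal either to $\alpha_{2s_1}$ (odd case) or to $\alpha_{2^k+2s_1}$ (even case). In particular, $\tilde\xi_{\det(\varrho_{t_1,s_1})}(D)=\tilde\xi_{\det(\varrho_{t_2,s_2})}(D)$.

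Finally, I would invoke Theorem~\ref{thm:rhs}-\eqref{it:xi.non.triv}, which expresses the second CCS-number as
\begin{equation*}
\widehat{c}_{\varrho_{t_i,s_i},2}([M])=\tilde\xi_{\varrho_{t_i,s_i}}(D)-\tilde\xi_{\det(\varrho_{t_i,s_i})}(D),\qquad i=1,2.
\end{equation*}
The hypothesis $\widehat{c}_{\varrho_{t_1,s_1},2}([M])=\widehat{c}_{\varrho_{t_2,s_2},2}([M])$ combined with the equality of the determinant contributions from the previous step yields $\tilde{\xi}_{\varrho_{t_1,s_1}}(D)=\tilde{\xi}_{\varrho_{t_2,s_2}}(D)$, as desired. (In the degenerate sub-case where the common determinant is trivial, the $\det$-terms vanish and the conclusion is even more direct via \eqref{eq:CCS2=xi}.) There is no real obstacle here since the proposition is bookkeeping that assembles Corollary~\ref{cor:fccsn=s}, Proposition~\ref{prop:varrhots.det} and Theorem~\ref{thm:rhs}; the serious analytic input, namely the actual computation of $\tilde{\xi}_{\varrho_{t,s}}(D)$ from Proposition~\ref{prop:xi.D2k2r+1}, is deliberately \emph{not} needed at this stage and is reserved for the subsequent conjectural step that would pin down $t_1=t_2$.
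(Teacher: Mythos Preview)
Your proposal is correct and follows essentially the same route as the paper's own proof: extract $s_1=s_2$ and $t_1\equiv t_2\pmod 2$ from the first CCS-number via Corollary~\ref{cor:fccsn=s}, conclude equality of determinants, and then cancel the $\tilde\xi_{\det}$-terms in Theorem~\ref{thm:rhs}-\eqref{it:xi.non.triv} to obtain $\tilde{\xi}_{\varrho_{t_1,s_1}}(D)=\tilde{\xi}_{\varrho_{t_2,s_2}}(D)$. The only cosmetic difference is that the paper deduces $\det(\varrho_{t_1,s_1})=\det(\varrho_{t_2,s_2})$ from Remark~\ref{rem:1dm.D2kq} (one-dimensional representations are classified by their first CCS-number) rather than from Proposition~\ref{prop:varrhots.det} directly, and it singles out the topologically trivial sub-case explicitly; neither point affects the argument.
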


\begin{proof}
Suppose that $\varrho_{t_1,s_1}$ and $\varrho_{t_2,s_2}$ have the same vector of CCS-numbers, that is, they have the same first and second CCS-numbers.
Thus, they have the same first Cheeger-Chern-Simons class and by Theorem~\ref{thm:rhs}-\eqref{it:c1.r.det}
\begin{equation*}
\widehat{c}_{\det(\varrho_{t_1,s_1}),1}=\widehat{c}_{\varrho_{t_1,s_1},1}=\widehat{c}_{\varrho_{t_2,s_2},1}=\widehat{c}_{\det(\varrho_{t_2,s_2}),1}.
\end{equation*}
Hence, by Corollary~\ref{cor:fccsn=s} and Remark~\ref{rem:1dm.D2kq}
\begin{align}
s_1&=s_2,\\
t_1&\equiv t_2\mod 2,\\
\det(\varrho_{t_1,s_1})&=\det(\varrho_{t_2,s_2}).\label{eq:det=det}
\end{align}
From Theorem~\ref{thm:rhs}-\eqref{it:top.triv} and Proposition~\ref{prop:fccs.Dkq} the topologically trivial two-dimensional irreducible representations are $\varrho_{t,0}$ with $t$ odd.
Thus, if $s_1=s_2=0$ and both $t_1$ and $t_2$ are odd, by \eqref{eq:CCS2=xi} we have
\begin{equation*}
\tilde{\xi}_{\varrho_{t_1,0}}(D)=\widehat{c}_{\varrho_{t_1,0},2}([M])= \widehat{c}_{\varrho_{t_2,0},2}([M])=\tilde{\xi}_{\varrho_{t_2,0}}(D).
\end{equation*}
For the other cases, by Theorem~\ref{thm:rhs}-\eqref{it:xi.non.triv} we have that
\begin{equation}
\tilde{\xi}_{\varrho_{t_1,s_1}}(D)-\tilde\xi_{\det(\varrho_{t_1,s_1})}(D)=\widehat{c}_{\varrho_{t_1,s_1},2}([M])= \widehat{c}_{\varrho_{t_2,s_2},2}([M])=\tilde{\xi}_{\varrho_{t_2,s_2}}(D)-\tilde\xi_{\det(\varrho_{t_2,s_2})}(D).
\end{equation}
By \eqref{eq:det=det} we have that $\tilde\xi_{\det(\varrho_{t_1,s_1})}(D)=\tilde\xi_{\det(\varrho_{t_2,s_2})}(D)$, hence $\tilde{\xi}_{\varrho_{t_1,s_1}}(D)=\tilde{\xi}_{\varrho_{t_2,s_2}}(D)$.
\end{proof}

Hence, by Proposition~\ref{prop:D2k2r.2d.sv}, to prove that the two-dimensional irreducible representations of $\pi_1(M)\cong D_{2^{k+1}(2r+1)}$ are classified by their vector of CCS-numbers we need to prove the following conjecture.

\begin{conjecture}\label{conj:Dk2r.2d}
Let $\varrho_{t_1,s}$ and $\varrho_{t_2,s}$ be two $2$-dimensional irreducible representations of $D_{2^{k+1}(2r+1)}$ such that $1\leq t_1,t_2\leq 2r$, $t_1\equiv t_2\mod 2$ and $0\leq s\leq 2^{k-1}-1$.
If $\tilde{\xi}_{\varrho_{t_1,s}}(D)=\tilde{\xi}_{\varrho_{t_2,s}}(D)$ then $t_1=t_2$.
\end{conjecture}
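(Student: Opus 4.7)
The plan is to start from the explicit formula of Proposition~\ref{prop:xi.D2k2r+1} and obtain a closed-form expression for $\tilde{\xi}_{\varrho_{t,s}}(D)$ that is quadratic (or piecewise quadratic) in $t$ modulo $\Z$, mirroring the formula $\tilde{\xi}_{\rho_t}(D)=\tfrac{t^{2}-2qt-2q}{4q}$ of Proposition~\ref{prop:xi.rhot} that settled the analogous question for $\mathsf{BD}_{2q}$. First I would subtract $\tilde{\xi}_{\varrho_{t_2,s}}(D)$ from $\tilde{\xi}_{\varrho_{t_1,s}}(D)$: the hypothesis $t_1\equiv t_2\pmod{2}$ makes the factor $(-1)^{tl}$ common to both terms, so the first sum in \eqref{eq:xi.gen} cancels, as does the $t$-free third sum, leaving a double sum over $q,l$ whose $t$-dependence is confined to the numerator via $\omega^{t_i q}+\omega^{-t_i q}$, with $\omega=\zeta_{2r+1}$.

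The key algebraic simplification is the factorization
\begin{equation*}
\zeta_{2^{k}}^{l}+\zeta_{2^{k}}^{-l}-(-1)^{l}(\omega^{q}+\omega^{-q}) = \zeta_{2^{k}}^{-l}\bigl(\zeta_{2^{k}}^{l}-(-1)^{l}\omega^{q}\bigr)\bigl(\zeta_{2^{k}}^{l}-(-1)^{l}\omega^{-q}\bigr),
\end{equation*}
followed by a partial-fraction decomposition in $\zeta_{2^{k}}^{l}$ and a split of the $l$-sum by parity (so that $(-1)^{l}$ is constant on each piece). Each resulting inner sum has the shape $\sum_{m=0}^{N-1}(\xi^{m}-X)^{-1}$ with $\xi=\zeta_{2^{k-1}}$ and $X\in\{\pm\omega^{\pm q},\pm\zeta_{2^{k}}^{\pm 1}\omega^{\pm q}\}$, and is evaluated by the classical identity $\sum_{m=0}^{N-1}(\xi^{m}-X)^{-1}=-NX^{N-1}/(X^{N}-1)$. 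This collapses the double sum to a single sum over $q$ whose kernel depends only on $\omega^{q\cdot 2^{k-1}}$. Applying Lemma~\ref{lem:TS1} (and its sign-twisted variant forced by the partial-fraction signs) to the remaining $q$-sum should yield a closed form that is a quadratic form in $t_1,t_2$ modulo $\Z$, after which the argument $(t_1-t_2)(t_1+t_2-c)\equiv 0$ of Proposition~\ref{prop:2dir.vcssn.cls} can be adapted together with the bounds $1\le t_i\le 2r$ and $t_1\equiv t_2\pmod 2$ to force $t_1=t_2$.

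The main obstacle is this final injectivity step: because $\tilde{\xi}$ is defined only modulo $\Z$, the identity obtained from the closed form is a congruence modulo $2^{k+1}(2r+1)$ rather than an exact equation, and for $2r+1$ with many small factors this congruence can admit parasitic solutions compatible with the parity constraint, so a genuinely new ingredient beyond the $\mathsf{BD}_{2q}$ argument appears necessary. A promising alternative route is to exploit the identification $\varrho_{t,s}=\mathrm{Ind}_{H}^{G}\chi_{t,s}$, where $H=\langle x^{2},y\rangle$ is the cyclic index-$2$ subgroup of $G=D_{2^{k+1}(2r+1)}$ of order $2^{k}(2r+1)$ (it is cyclic since $\gcd(2^{k},2r+1)=1$ and abelian since $xyx^{-1}=y^{-1}$ gives $x^{2}yx^{-2}=y$), with $\chi_{t,s}(x^{2})=\zeta_{2^{k}}^{s}$ and $\chi_{t,s}(y)=\zeta_{2r+1}^{t}$. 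Via the induction formula for $\eta$-invariants, this matches $\tilde{\xi}_{\varrho_{t,s}}(D)$ on $M$ with $\tilde{\xi}_{\chi_{t,s}}(D)$ on the lens-space double cover $N=\mathbb{S}^{3}/H$, converting the conjecture into an injectivity statement for a classical Dedekind-type sum on $N$, where reciprocity laws are available.
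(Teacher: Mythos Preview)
The statement you are attempting is labeled a \emph{Conjecture} in the paper for good reason: the authors explicitly write that they ``have not been able to prove Conjecture~\ref{conj:Dk2r.2d} using \eqref{eq:xi.gen}, so this case is still missing.'' There is therefore no proof in the paper to compare your proposal against; what the paper offers is only the expression of Proposition~\ref{prop:xi.D2k2r+1} and numerical evidence (Examples~\ref{ex:8.5}--\ref{ex:16.3}).

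Your first strategy is precisely the route the authors tried and could not complete. The preliminary reductions you describe are correct: with $t_1\equiv t_2\pmod 2$ the first and third sums in \eqref{eq:xi.gen} do cancel, the denominator factors as you state, and the partial-fraction plus finite-geometric-sum machinery is the natural next move. But you yourself identify the genuine obstruction and do not overcome it: the resulting identity lives in $\C/\Z$, i.e.\ is only a congruence modulo $|G|=2^{k+1}(2r+1)$, and the argument ``$(t_1-t_2)(t_1+t_2-c)=0$'' that closed the $\mathsf{BD}_{2q}$ case in Proposition~\ref{prop:2dir.vcssn.cls} worked there because the expression of Proposition~\ref{prop:xi.rhot} was an honest rational number, not merely a class modulo $\Z$. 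Saying that ``a genuinely new ingredient \dots\ appears necessary'' is an accurate diagnosis, not a proof.

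Your alternative route via induction from the index-$2$ abelian subgroup $H=\langle x^{2},y\rangle$ is a reasonable idea, but as written it is also only a plan. Two concrete issues: first, the inducing character is slightly off, since from \eqref{eq:rep.sets} one has $\varrho_{t,s}(x^{2})=(-1)^{t}\zeta_{2^{k}}^{s}\cdot I$, so one needs $\chi_{t,s}(x^{2})=(-1)^{t}\zeta_{2^{k}}^{s}$ rather than $\zeta_{2^{k}}^{s}$; second, and more seriously, you reduce the conjecture to an injectivity statement for a lens-space $\tilde\xi$-invariant (equivalently a Dedekind--Rademacher-type sum), but you do not prove that injectivity, and such statements are themselves delicate. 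As it stands, neither branch of your proposal closes the gap, which is consistent with the paper's decision to leave this as an open conjecture.
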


So far, we have not been able to prove Conjecture~\ref{conj:Dk2r.2d} using \eqref{eq:xi.gen}, so this case is still missing.

Conjecture~\ref{conj:Dk2r.2d} is satisfied in the examples we have computed, for instance, in Examples~\ref{ex:8.5} and \ref{ex:16.5}
we can see in Tables~\ref{tb:c1.8.5} and \ref{tb:c1.16.5} that, as Corollary~\ref{cor:fccsn=s} says, the representations with the same first CSS-number have the same $s$ and $t$ with the same parity.
Thus, the first CCS-numbers are not sufficient to distinguish these irreducible representations. But the $\tilde\xi$-invariant given in Tables~\ref{tb:xi.8.5} and \ref{tb:xi.16.5} distinguish them.

\begin{example}\label{ex:8.5}
Consider the group $D_{8(5)}$. It has $8$ different two-dimensional irreducible representations $\varrho_{t,s}$.
Their first CCS-numbers are:
\begin{table}[H]
\begin{equation*}
\setlength{\extrarowheight}{3pt}
\begin{array}{|c|c|c|}\hline
4\cdot\widehat{c}_{\varrho_{t,s},1}(x)&s=0&s=1\\
\hline
t=1 & 0 & 1\\\hline
t=2 & 2 & 3\\\hline
t=3 & 0 & 1\\\hline
t=4 & 2 & 3\\\hline
\end{array}
\end{equation*}
\caption{Values of $\widehat{c}_{\varrho_{t,s},1}(x)$ for the group $D_{8(5)}$.}\label{tb:c1.8.5}
\end{table}
Their $\tilde\xi$-invariants are:
\begin{table}[H]
\begin{equation*}
\begin{array}{|c|c|c|}\hline
40\cdot\tilde{\xi}_{\rho_{t,s}}(D) & s=0 & s=1 \\\hline\hline
t=1 & -4 & -9 \\\hline
t=2 & -16 & -1 \\\hline
t=3 & 4 & -1 \\\hline
t=4 & 16 & -9 \\\hline
\end{array}
\end{equation*}
\caption{Values of $\tilde{\xi}_{\rho_{t,s}}(D)$ for the group $D_{8(5)}$.}\label{tb:xi.8.5}
\end{table}
\end{example}

\begin{example}\label{ex:16.5}
Consider the group $D_{16(5)}$. It has $16$ different two-dimensional irreducible representations $\varrho_{t,s}$.
Their first CCS-numbers are:
\begin{table}[H]
\begin{equation*}
\setlength{\extrarowheight}{3pt}
\begin{array}{|c|c|c|c|c|}\hline
8\cdot\widehat{c}_{\varrho_{t,s},1}&s=0&s=1&s=2&s=3\\
\hline
t=1 &0&1&2&3\\\hline
t=2 &4&5&6&7\\\hline
t=3 &0&1&2&3\\\hline
t=4 &4&5&6&7\\\hline
\end{array}
\end{equation*}
\caption{Values of $\widehat{c}_{\varrho_{t,s},1}(x)$ for the group $D_{16(5)}$.}\label{tb:c1.16.5}
\end{table}
Their $\tilde\xi$-invariants are:
\begin{table}[H]
\begin{equation*}
\setlength{\extrarowheight}{3pt}
\begin{array}{|c|c|c|c|c|}\hline
80\cdot\tilde{\xi}_{\rho_{t,s}}(D)&s=0&s=1&s=2&s=3\\
\hline
t=1 &96&31&76&71\\\hline
t=2 &64&39&44&79 \\\hline
t=3 &64&79&44&39\\\hline
t=4 &96&71&76&31\\\hline
\end{array}
\end{equation*}
\caption{Values of $\tilde{\xi}_{\rho_{t,s}}(D)$ for the group $D_{16(5)}$.}\label{tb:xi.16.5}
\end{table}
\end{example}

There are groups $D_{2^{k+1}(2r+1)}$ for which some of the two-dimensional representations $\rho_{t,s}$ have the same $\tilde\xi$-invariant.
However, the representations $\rho_{t,s}$ are classified by their first CCS-numbers.
For instance, in Example~\ref{ex:16.3} we can see in Table~\ref{tb:xi.16.3} that $\tilde{\xi}_{\varrho_{1,s}}(D)=\tilde{\xi}_{\varrho_{2,s}}(D)$ with $s=0,2$,
but in Table~\ref{tb:c1.16.3} we have that $\widehat{c}_{\varrho_{1,s},1}(x)\neq\widehat{c}_{\varrho_{2,s},1}(x)$ with $s=0,2$.

\begin{example}\label{ex:16.3}
Consider the group $D_{16(3)}$. It has $8$ different two-dimensional irreducible representations $\varrho_{t,s}$. Their first CCS-numbers are:
\begin{table}[H]
\begin{equation*}
\setlength{\extrarowheight}{3pt}
\begin{array}{|c|c|c|c|c|}\hline
8\cdot\widehat{c}_{\varrho_{t,s},1}(x)&s=0 &s=1&s=2&s=3\\
\hline
t=1 & 0 & 1 & 2 & 3\\\hline
t=2 & 4 & 5 & 6 & 7\\[3pt]\hline
\end{array}
\end{equation*}
\caption{Values of $\widehat{c}_{\varrho_{t,s},1}(x)$ for the group $D_{16(3)}$.}\label{tb:c1.16.3}
\end{table}
The $\tilde\xi$-invariants of   $\varrho_{t,s}$ are given by
\begin{table}[H]
\begin{equation*}
\setlength{\extrarowheight}{3pt}
\begin{array}{|c|c|c|c|c|}\hline
48\cdot\tilde{\xi}_{\rho_{t,s}}(D)&s=0&s=1&s=2&s=3\\
\hline
t=1&-32 & -17&-20&-41\\\hline
t=2&-32 & -41 &-20&-17 \\[3pt]\hline
\end{array}
\end{equation*}
\caption{Values of $\tilde{\xi}_{\rho_{t,s}}(D)$ for the group $D_{16(3)}$. }\label{tb:xi.16.3}
\end{table}
\end{example}

\subsection{Classification of irreducible representations of $P'_{8\cdot 3^k}$} 

The group $P'_{8\cdot 3^k}$ can be seen as a subgroup of $\U$ and by Theorem~\ref{thm:D.T} it corresponds to the group $\mathbb{T}_{3^{k-1}}$ in Theorem~\ref{thm:fssu2}.
Consider the spherical $3$-manifold $M=\mathbb{S}^3/\mathbb{T}_{3^{k-1}}$ given by the quotient of the action of $\mathbb{T}_{3^{k-1}}$ on $\mathbb{S}^3$. Hence we have $\pi_1(M)\cong\mathbb{T}_{3^{k-1}}\cong P'_{8\cdot 3^k}$.

\begin{remark}\label{rem:1dr.Pp}
From Subsection~\ref{ssec:Pp83k} the group $P'_{8\cdot 3^k}$ has $3^k$ one-dimensional irreducible representations, which correspond to the irreducible representations of its abelianization $\mathsf{C}_{3^k}$; thus, the first CCS-numbers of one-dimensional representations are given as in Table~\ref{tbl:nq} with $n=3^k$.
So they are classified by the first CCS-number and their vector of CCS-numbers is of the form $(1;\widehat{c}_{\alpha_j,1}(x);0)$.
\end{remark}

\begin{proposition}\label{prop:Pp83k.class}
	The irreducible representations of the group $P'_{8\cdot 3^k}$ are classified by their rank and first CCS-number.
\end{proposition}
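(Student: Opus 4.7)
The plan is to split the irreducible representations of $P'_{8\cdot 3^k}$ by rank and then show, within each rank class, that the first CCS-number is an injective invariant. By Subsection~\ref{ssec:Pp83k}, the irreducible representations come in three rank strata ($3^k$ of dimension $1$, $3^k$ of dimension $2$, and $3^{k-1}$ of dimension $3$), so two representations of different ranks are automatically distinguished by the pair (rank, first CCS-number).

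For the one-dimensional representations the claim is Remark~\ref{rem:1dr.Pp}: since $\Ab(P'_{8\cdot 3^k})\cong\mathsf{C}_{3^k}$ by Lemma~\ref{lem:Ab}~\eqref{it:Ab.P}, the $\alpha_j$ pull back from the irreducible representations of $\mathsf{C}_{3^k}$, and the entries in Table~\ref{tbl:nq} with $n=3^k$ are pairwise distinct.

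For the two- and three-dimensional representations I would reduce the problem to the one-dimensional case via the determinant. Using the explicit matrices in \eqref{eq:2drPp83k}, a short calculation (using $1+\zeta_3+\zeta_3^2=0$) gives
\begin{equation*}
\det\varrho_s(x)=\det\varrho_s(y)=1,\qquad \det\varrho_s(z)=\zeta_{3^k}^{2s},
\end{equation*}
so $\det\varrho_s=\alpha_{2s}$. Similarly, from \eqref{eq:3drPp83k} one gets $\det\varsigma_s(x)=\det\varsigma_s(y)=1$ and $\det\varsigma_s(z)=\zeta_{3^k}^{3s}$, hence $\det\varsigma_s=\alpha_{3s}$. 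Then Theorem~\ref{thm:rhs}~\eqref{it:c1.r.det} yields
\begin{equation*}
\widehat{c}_{\varrho_s,1}=\widehat{c}_{\alpha_{2s},1},\qquad \widehat{c}_{\varsigma_s,1}=\widehat{c}_{\alpha_{3s},1}.
\end{equation*}

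Injectivity within each stratum now follows from an elementary arithmetic observation: in the two-dimensional case, $\gcd(2,3^k)=1$ so $s\mapsto 2s\bmod 3^k$ is a bijection on $\{0,\dots,3^k-1\}$, whence the $\widehat{c}_{\varrho_s,1}$ are pairwise distinct by Remark~\ref{rem:1dr.Pp}; in the three-dimensional case, $s$ ranges only over $\{0,\dots,3^{k-1}-1\}$, and on this range $3s$ takes distinct values modulo $3^k$, so again the $\widehat{c}_{\varsigma_s,1}$ are pairwise distinct. There is no genuine obstacle here: the only step requiring care is the determinant calculation for $\varsigma_s(z)$, where one must remember that the scalar factor $\zeta_{3^k}^s$ contributes as $\zeta_{3^k}^{3s}$ to the $3\times 3$ determinant, which is precisely what makes the range of $s$ shrink to $3^{k-1}$ values and keeps injectivity intact.
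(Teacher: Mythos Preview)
Your proof is correct and follows essentially the same approach as the paper's: split by rank, reduce via Theorem~\ref{thm:rhs}\eqref{it:c1.r.det} to the determinant representations, compute these explicitly from \eqref{eq:2drPp83k} and \eqref{eq:3drPp83k}, and conclude by the injectivity observation you spell out. Your determinant values for $\varsigma_s$ (namely $\det\varsigma_s(x)=1$ and $\det\varsigma_s(z)=\zeta_{3^k}^{3s}$) are in fact the correct ones---the paper records $-1$ and $\zeta_{3^k}^{s}$, which appear to be typos---but the argument and conclusion are unaffected either way.
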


\begin{proof}
We have three cases given by the rank.
\begin{enumerate}
\item By Remark~\ref{rem:1dr.Pp} one-dimensional representations are classified by their first CCS-number.\label{it:case1d}

\item Let $\varrho_{s_1}$ and $\varrho_{s_2}$, with $s_1,s_2=0,\dots,3^{k}-1$, be two-dimensional irreducible representations, so they are of the form given in \eqref{eq:2drPp83k}.
Suppose they have the same first CCS-number, thus by case~\eqref{it:case1d} their determinats coincide. From \eqref{eq:2drPp83k} the determinants are
\begin{equation}
\begin{aligned}
\det\varrho_{s_i}(x)&=1, & \det\varrho_{s_i}(y)&=1, & \det\varrho_{s_i}(z)&=\zeta_{3^k}^{2s_i}, \quad i=1,2.\\
\end{aligned}
\end{equation}
The equality $\det\varrho_{s_1}(z)=\det\varrho_{s_2}(z)$ implies that $\zeta_{3^k}^{2s_1}=\zeta_{3^k}^{2s_2}$; since $s_1,s_2\in\{0,\dots,3^{k}-1\}$ then $s_1=s_2$.

\item Let $\varsigma_{s_1}$ and $\varsigma_{s_2}$, with $s_1,s_2=0,\dots,3^{k-1}-1$, be three-dimensional irreducible representations given in \eqref{eq:3drPp83k}.
As before, if they have the same first CCS-number, by case~\eqref{it:case1d} their determinants coincide and from \eqref{eq:3drPp83k}
\begin{equation}
	\begin{aligned}
		\det\varsigma_{s_i}(x)&=-1, & \det\varsigma_{s_i}(y)&=1, & \det\varsigma_{s_i}(z)&=\zeta_{3^k}^{s_i}, \quad i=1,2.
	\end{aligned}
\end{equation}
From $\det\varsigma_{s_1}(z)=\det\varsigma_{s_2}(z)$ we have that $\zeta_{3^k}^{s_1}=\zeta_{3^k}^{s_2}$, so $s_1=s_2$.\qedhere

\end{enumerate}
\end{proof}

\subsection{Classification of irreducible representations of $\Gamma\times\mathsf{C}_m$}\label{ssec:GxC}

Our aim is to classify flat vector bundles over spherical $3$-manifolds, or equivalently, the corresponding irreducible representations of their fundamental group
given by \eqref{eq:Flat.Rep}, by their vector of CCS-numbers (equivalently, their CCS-triple).
Let $M$ be a spherical $3$-manifolds, its fundamental group $\Gamma$ is listed in Theorem~\ref{thm:fsgfi}. 
If $\Gamma$ is one of the groups $\mathsf{BT}$, $\mathsf{BO}$ or $\mathsf{BI}$ (groups \eqref{it:ffi.P24}, \eqref{it:ffi.P48} and \eqref{it:ffi.P120} in Theorem~\ref{thm:fsgfi}),
they are finite subgroups of $\SU$, and the fact that their irreducible representations are classified by their vector of CCS-numbers follows from the results in \cite[\S5.3, \S5.4]{arciniegaetal:CCSC} which we summarize in Table~\ref{tb:fcn}, where $[M]$ is the fundamental class of $M$ and $\bar{c}$ the generator of $H_1(M;\Z)\cong\Ab(\Gamma)$ (see Table~\ref{tb:AbG})
\begin{table}[H]
	\begin{equation*}
		\setlength{\extrarowheight}{3pt}
		\begin{array}{|c|c|c|c|c|c|c|c|c|c|}\hline
			\mathsf{BT} & \alpha_1 & \alpha_2 & \alpha_3 & \alpha_4 & \alpha_5 & \alpha_6 & \alpha_7 & & \\\hline
			\widehat{c}_{\alpha,1}(\bar{c}) & 0 & \frac{2}{3} & \frac{1}{3} & 0 & \frac{1}{3} & \frac{2}{3} & 0 & & \\[3pt]\hline
			\widehat{c}_{\alpha,2}([M]) & 0 & 0 & 0 & \frac{1}{24} & \frac{3}{8} & \frac{3}{8} & \frac{1}{6} & & \\[4pt]\hline\hline			
			\mathsf{BO} & \alpha_1 & \alpha_2 & \alpha_3 & \alpha_4 & \alpha_5 & \alpha_6 & \alpha_7 & \alpha_8 & \\\hline
			\widehat{c}_{\alpha,1}(\bar{c}) & 0 & \frac{1}{2} & \frac{1}{2} & 0 & 0 & \frac{1}{2} & 0 & 0 & \\[3pt]\hline
			\widehat{c}_{\alpha,2}([M]) & 0 & 0 & \frac{1}{3} & \frac{1}{48} & \frac{25}{48} & \frac{7}{12} & \frac{1}{12} & \frac{5}{24} & \\[4pt]\hline\hline			
			\mathsf{BI} & \alpha_1 & \alpha_2 & \alpha_3 & \alpha_4 & \alpha_5 & \alpha_6 & \alpha_7 & \alpha_8 & \alpha_9 \\\hline
			\widehat{c}_{\alpha,1}(1) & 0 & 0 & 0 & 0 & 0 & 0 & 0 & 0 & 0 \\\hline
			\widehat{c}_{\alpha,2}([M]) & 0 & \frac{1}{120} & \frac{49}{120} & \frac{19}{30} & \frac{1}{30} & \frac{5}{6} & \frac{1}{12} & \frac{1}{6} & \frac{7}{24} \\[4pt]\hline			
		\end{array}
	\end{equation*}
	\caption{CCS-numbers of irreducible representations of $\mathsf{BT}$, $\mathsf{BO}$ and $\mathsf{BI}$}\label{tb:fcn}
\end{table}

If $\Gamma$ is $\mathsf{C}_n$, $\mathsf{BD}_{2q}$ or $P'_{8\cdot 3^k}$ (families \eqref{it:ffi.Q} or \eqref{it:ffi.Pk} in Theorem~\ref{thm:fsgfi}) the classification of irreducible representations by their vector of CCS-numbers follows from Corollary~\ref{cor:vccs.lens}, Remark~\ref{rem:1dr.vcssn}, Proposition~\ref{prop:2dir.vcssn.cls} and Proposition~\ref{prop:Pp83k.class}.

If $\Gamma$ is $D_{2^k(2r+1)}$ (family \eqref{it:ffi.Dmk} in Theorem~\ref{thm:fsgfi}) by Remark~\ref{rem:1dm.D2kq} one-dimensional irreducible representations are classified by their vector of CCS-numbers. The case of two-dimensional irreducible representations is missing, but we conjecture (Conjecture~\ref{conj:Dk2r.2d}) that they are also classified by their vector of CCS-numbers.

Here we deal with spherical $3$-manifolds $M$ whose fundamental group $\pi_1(M)=G$ is in family \eqref{it:Gammaxcyc} in Theorem~\ref{thm:fsgfi} of groups of the form $G=\Gamma\times\mathsf{C}_m$, where $\Gamma$ is $\mathsf{BD}_{2q}$, $\mathsf{BT}$, $\mathsf{BO}$, $\mathsf{BI}$, $D_{2^k(2r+1)}$ or $P'_{8\cdot 3^k}$ (groups \eqref{it:ffi.Q}-\eqref{it:ffi.Pk} in Theorem~\ref{thm:fsgfi}).
But, as we mentioned above, only for one-dimensional representations in the case of $\Gamma=D_{2^k(2r+1)}$.

We shall use Chern and Cheeger-Chern-Simons classes in $\BG[G]$ defined in Remark~\ref{rem:G.M.ccs} and the following lemma.
\begin{lemma}[{\cite[\S5.5]{arciniegaetal:CCSC}}]\label{lem:c=ccs}
Let $G$ be a finite group and $\rho\colon G\to\GLn{n}$ a representation. Then there is an isomorphism
$H^{j}(\BG[G];\Z)\cong H^{j-1}(\BG[G]; \C/\Z)$ for $j\geq 2$, under which, the $k$-th Chern class $c_k(\rho)$ and the $k$-th Cheeger-Chern-Simons class $\widehat{c}_{k}(\rho)$ of $\rho$ in $\BG[G]$ correspond to each other
\begin{equation}\label{eq:crk.ckr}
	\begin{split}
		H^{j}(\BG[G];\Z)&\cong H^{j-1}(\BG[G]; \C/\Z)\\
		c_k(\rho)&\leftrightarrow \widehat{c}_k(\rho).
	\end{split}
\end{equation}
\end{lemma}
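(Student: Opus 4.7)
The plan is to derive the isomorphism $H^{j}(\BG[G];\Z)\cong H^{j-1}(\BG[G];\C/\Z)$ from the Bockstein associated with the coefficient sequence $0\to\Z\to\C\to\C/\Z\to 0$, and then verify that this Bockstein carries $\widehat{c}_k(\rho)$ to $c_k(\rho)$ by appealing to the characteristic-class property of the universal Chern differential character.

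For the isomorphism, I would first use that $G$ is finite: a transfer argument (or equivalently, the fact that $H^{j}(\BG[G];\C)$ is annihilated by $|G|$ while simultaneously being a $\C$-vector space) yields $H^{j}(\BG[G];\C)=0$ for every $j\geq 1$. Applying the long exact sequence in cohomology induced by the short exact sequence $0\to\Z\to\C\to\C/\Z\to 0$ then produces, for each $j\geq 2$, the four-term piece
\begin{equation*}
H^{j-1}(\BG[G];\C)\longrightarrow H^{j-1}(\BG[G];\C/\Z)\xrightarrow{\delta} H^{j}(\BG[G];\Z)\longrightarrow H^{j}(\BG[G];\C),
\end{equation*}
whose outer terms vanish by the previous step, so the Bockstein $\delta$ is the desired isomorphism.

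For the correspondence of classes, I would work universally. The universal Chern differential character $\widehat{C}_k\in\widehat{H}^{2k}(\BG[\GLn{n}];\C/\Z)$ has, by its Cheeger-Simons construction, characteristic class equal to the universal Chern class $c_k\in H^{2k}(\BG[\GLn{n}];\Z)$. Pulling back along $\iota\colon\BG[\GLn{n}^d]\to\BG[\GLn{n}]$ the universal connection becomes flat, the Chern-Weil curvature form vanishes, and $\iota^{\ast}\widehat{C}_k$ reduces to the universal CCS class $\widehat{c}_k\in H^{2k-1}(\BG[\GLn{n}^d];\C/\Z)$ of \eqref{eq:UCCSc}; the exact sequence governing Cheeger-Simons differential characters then forces $\delta(\widehat{c}_k)=\iota^{\ast}c_k$ in $H^{2k}(\BG[\GLn{n}^d];\Z)$. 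Pulling back once more along $\BG[\rho]\colon\BG[G]\to\BG[\GLn{n}^d]$ and invoking naturality of the Bockstein yields
\begin{equation*}
\delta\bigl(\widehat{c}_k(\rho)\bigr)=\delta\bigl(\BG[\rho]^{\ast}\widehat{c}_k\bigr)=\BG[\rho]^{\ast}\iota^{\ast}c_k=c_k(\rho),
\end{equation*}
where the last equality is the definition of $c_k(\rho)$ as the Chern class of the bundle $\EG[G]\times_{\rho}\C^n$, since pullback commutes with Chern classes.

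The main obstacle is essentially notational rather than computational: one must carefully track the reduction of the universal Chern differential character in $\widehat{H}^{2k}(\BG[\GLn{n}];\C/\Z)$ to an ordinary cohomology class in $H^{2k-1}(\BG[\GLn{n}^d];\C/\Z)$ after pulling back to the flat setting, and identify the characteristic-class homomorphism on differential characters with the Bockstein in this situation. Once this identification is set up, functoriality of pullbacks and of $\delta$ settles the correspondence without any further computation.
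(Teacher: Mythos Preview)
Your argument is correct and follows essentially the same route as the paper: both obtain the isomorphism from the Bockstein of the coefficient sequence $0\to\Z\to\C\to\C/\Z\to 0$ after observing that $H^{j}(\BG[G];\C)=0$ for $j\geq 1$ (the paper deduces this from the Universal Coefficients Theorem and torsion of $H_j(\BG[G];\Z)$, you via the transfer, but these are interchangeable). Your treatment of the correspondence $\delta(\widehat{c}_k(\rho))=c_k(\rho)$ via the characteristic-class property of the universal differential character is in fact more explicit than the paper's, which essentially cites this as known from \cite{arciniegaetal:CCSC}.
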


\begin{proof}
By the Universal Coefficients Theorem, the fact that $\C$ is a divisible abelian group and that
the homology groups $H_j(\BG[G];\Z)$ are torsion, one can prove that $H^j(\BG[G];\C)=0$ for all positive integer $j$.
Using this, in the cohomology long exact sequence 
\begin{equation}\label{eq:coef.CheegerSimons2}
	\dots\to H^{2k-1}(\BG[G];\C/\Z) \xrightarrow{q} H^{2k}(\BG[G];\Z) \xrightarrow{r} H^{2k}(\BG[G];\C) \xrightarrow{p_\Z}\cdots
\end{equation}
induced by the short exact sequence of coefficients $0 \to \Z \to \C \to \C/\Z \to 0$, we get the result.
\end{proof}

\begin{remark}\label{rem:Chern.triple}
To prove that the irreducible representations of $\pi_1(M)=G$ are classified by their vector of CCS-numbers, by Remark~\ref{rem:CCSt.vCCSn} it is equivalent to prove that they are classified by their CSS-triple $[\dim\rho; \widehat{c}_{\rho,1}; \widehat{c}_{\rho,2}]$, which in turn, by Remark~\ref{rem:G.M.ccs}
it is equivalent to prove that they are classified by their CCS-triple $[\dim\rho; \widehat{c}_{1}(\rho); \widehat{c}_{2}(\rho)]$ in $\BG[G]$.
By Lemma~\ref{lem:c=ccs}, it is equivalent to prove that they are classified by their triple
of Chern classes $[\dim\rho; c_{1}(\rho); c_{2}(\rho)]$ in $\BG[G]$.
\end{remark}

We need a technical lemma.
\begin{lemma}\label{lem:no.divisors}
Let $\Gamma$ be a group in the families \eqref{it:ffi.Q}-\eqref{it:ffi.Pk} in Theorem~\ref{thm:fsgfi}. 
Let $n>1$ be the rank of an irreducible representation of $\Gamma$ and $m\in\mathbb{N}$ such that $\gcd(|\Gamma|,m)=1$.
Then $n$ does not divide $m$. Thus, if $nc=0\in\Z_m$, then $c=0$.
\end{lemma}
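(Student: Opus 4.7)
The plan is to reduce the statement to the classical fact that the dimension of a complex irreducible representation of a finite group divides the order of that group. First I would invoke this theorem to conclude that $n$ divides $|\Gamma|$. Since $\gcd(|\Gamma|,m)=1$, every divisor of $|\Gamma|$ is coprime to $m$; in particular $\gcd(n,m)=1$. Because $n>1$, it has some prime divisor $p$, and $p$ divides $|\Gamma|$ but not $m$, so $n$ cannot divide $m$. This is the first assertion.

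For the second assertion, suppose $nc = 0$ in $\Z_m$, i.e., $m \mid nc$ in $\Z$. By the coprimality $\gcd(n,m)=1$ established above, Euclid's lemma forces $m \mid c$, hence $c=0$ in $\Z_m$. Note that the mere statement ``$n$ does not divide $m$'' would not be enough to conclude this (for instance $6 \nmid 10$ but $6\cdot 5 \equiv 0 \bmod 10$); what the argument really supplies, and what is needed, is the stronger coprimality $\gcd(n,m)=1$.

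There is essentially no obstacle. The only point to flag is the distinction above between ``$n\nmid m$'' and ``$\gcd(n,m)=1$''; the proof should make clear that coprimality is what gets used. As a sanity check, one could also verify the claim directly from the tables of irreducible dimensions given in Section~\ref{sec:irr.rep}: the possible values of $n>1$ are $2$ for $\mathsf{BD}_{2q}$ and $D_{2^{k+1}(2r+1)}$; $2$ and $3$ for $\mathsf{BT}$ and $P'_{8\cdot 3^k}$; $2,3,4$ for $\mathsf{BO}$; and $2,3,4,5,6$ for $\mathsf{BI}$. In every case the prime divisors of $n$ are among the prime divisors of $|\Gamma|$, hence coprime to $m$. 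However, appealing to the general divisibility theorem keeps the argument uniform.
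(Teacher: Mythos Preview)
Your proof is correct and takes a genuinely different route from the paper. The paper proceeds by a direct case-by-case inspection of the six families: for each $\Gamma$ it writes down $|\Gamma|$, lists the possible irreducible dimensions $n>1$ from Section~\ref{sec:irr.rep}, and checks that the prime factors of each such $n$ already occur in $|\Gamma|$, hence are coprime to $m$. Your argument instead invokes the classical theorem that the degree of an irreducible complex representation of a finite group divides the group order, giving $n\mid|\Gamma|$ and hence $\gcd(n,m)=1$ in one stroke, uniformly for all cases. This is shorter and more conceptual; the only cost is importing an outside result, whereas the paper's argument is self-contained given the representation tables already computed in Section~\ref{sec:irr.rep}.

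Your remark about the second assertion is well taken and worth emphasizing: the conclusion ``$nc=0$ in $\Z_m$ implies $c=0$'' requires $\gcd(n,m)=1$, not merely $n\nmid m$. The paper's case analysis does in fact establish coprimality (every prime factor of each listed $n$ divides $|\Gamma|$), but the lemma as stated only records the weaker ``$n$ does not divide $m$''. Your proof makes the needed coprimality explicit, which is the logically cleaner formulation.
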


\begin{proof}
We need to analyse each of the families \eqref{it:ffi.Q}-\eqref{it:ffi.Pk} in Theorem~\ref{thm:fsgfi}.
\begin{description}
\item[$\mathsf{BD}_{2q}$] In this case $|\mathsf{BD}_{2q}|=4q$, $n=2$, since $\gcd(4q,m)=1$, then $2$ does not divide $m$.

\item[$\mathsf{BT}$] In this case $|\mathsf{BT}|=24$, $n=2,3$, since $\gcd(24,m)=1$, then $2$ and $3$ do not divide $m$.

\item[$\mathsf{BO}$] In this case $|\mathsf{BO}|=48$, $n=2,3,4$, since $\gcd(48,m)=1$, then $2$, $3$ and $4$ do not divide $m$.

\item[$\mathsf{BI}$]  In this case $|\mathsf{BI}|=120$, $n=2,3,4,5,6$, since $\gcd(120,m)=1$, then $2$, $3$, $4$, $5$ and $6$ do not divide $m$.

\item[$D_{2^k(2r+1)}$] In this case $|D_{2^k(2r+1)}|=2^k(2r+1)$, $n=2$, since $\gcd(2^k(2r+1),m)=1$, then $2$ does not divide $m$.

\item[$P'_{8\cdot 3^k}$] In this case $|P'_{8\cdot 3^k}|=8\cdot 3^k$, $n=2,3$, since $\gcd(8\cdot 3^k,m)=1$, then $2$ and $3$ do not divide $m$.\qedhere
\end{description}
\end{proof}

\begin{proposition}\label{prop:G.product.class}
Consider a group of the form $\Gamma\times\mathsf{C}_m$, where $\Gamma$ is a group in the families
\eqref{it:ffi.Q}-\eqref{it:ffi.Pk} in Theorem~\ref{thm:fsgfi} and $\gcd(|\Gamma|,m)=1$.
The irreducible representations of $\Gamma\times\mathsf{C}_m$ are classified by their
triple of Chern classes $[\dim\rho; c_{1}(\rho); c_{2}(\rho)]$ in $\BG[(\Gamma\times\mathsf{C}_m)]$.
In the case when $\Gamma=D_{2^k(2r+1)}$ we only consider one-dimensional representations.
\end{proposition}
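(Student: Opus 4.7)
The plan is to reduce the classification to the ones already established for $\Gamma$ alone. Every irreducible representation of $G=\Gamma\times\mathsf{C}_m$ is of the form $\rho\otimes\chi$, where $\rho$ is an irreducible representation of $\Gamma$ of dimension $n$ (pulled back along the first projection) and $\chi\colon\mathsf{C}_m\to\GLn{1}$ is a one-dimensional representation (pulled back along the second projection). Thus it suffices to show that the triple $[n;c_1(\rho\otimes\chi);c_2(\rho\otimes\chi)]$ determines the pair $(\rho,\chi)$.

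Since $\BG[G]\simeq \BG[\Gamma]\times\BG[\mathsf{C}_m]$, and by hypothesis the integral cohomologies of $\BG[\Gamma]$ and $\BG[\mathsf{C}_m]$ are torsion of coprime orders so that all Tor terms in the Künneth theorem vanish, we obtain the direct-sum decompositions
\begin{align*}
H^2(\BG[G];\Z)&\cong H^2(\BG[\Gamma];\Z)\oplus H^2(\BG[\mathsf{C}_m];\Z),\\
H^4(\BG[G];\Z)&\cong H^4(\BG[\Gamma];\Z)\oplus\bigl(H^2(\BG[\Gamma];\Z)\otimes H^2(\BG[\mathsf{C}_m];\Z)\bigr)\oplus H^4(\BG[\mathsf{C}_m];\Z).
\end{align*}
Applying the tensor-product formulas of the appendix (for a rank-$n$ bundle tensored with a line bundle),
\begin{align*}
c_1(\rho\otimes\chi)&=c_1(\rho)+n\,c_1(\chi),\\
c_2(\rho\otimes\chi)&=c_2(\rho)+(n-1)\,c_1(\rho)\smile c_1(\chi)+\tbinom{n}{2}\,c_1(\chi)^2,
\end{align*}
the $H^2(\BG[\Gamma])$- and $H^2(\BG[\mathsf{C}_m])$-components of $c_1(\rho\otimes\chi)$ are $c_1(\rho)$ and $n\cdot c_1(\chi)$ respectively, while the three Künneth components of $c_2(\rho\otimes\chi)$ are $c_2(\rho)$, $(n-1)\,c_1(\rho)\otimes c_1(\chi)$, and $\binom{n}{2}\,c_1(\chi)^2$.

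By Lemma~\ref{lem:no.divisors}, multiplication by $n$ is a bijection of $H^2(\BG[\mathsf{C}_m];\Z)\cong\mathsf{C}_m$, so the $H^2(\BG[\mathsf{C}_m])$-component of $c_1(\rho\otimes\chi)$ determines $c_1(\chi)$, and hence $\chi$ (one-dimensional representations of a cyclic group being classified by their first Chern class, cf.\ Remark~\ref{rem:irr.rep.Cn.class}). The $H^2(\BG[\Gamma])$-component of $c_1(\rho\otimes\chi)$ then yields $c_1(\rho)$, and the $H^4(\BG[\Gamma])$-component of $c_2(\rho\otimes\chi)$ yields $c_2(\rho)$, recovering the triple $[n;c_1(\rho);c_2(\rho)]$ in $\BG[\Gamma]$. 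This triple determines $\rho$ by the classifications obtained earlier in Remark~\ref{rem:1dr.vcssn}, Proposition~\ref{prop:2dir.vcssn.cls}, Table~\ref{tb:fcn}, Remark~\ref{rem:1dm.D2kq} and Proposition~\ref{prop:Pp83k.class}, together with Remark~\ref{rem:Chern.triple} to pass between CCS-numbers and Chern classes (the two-dimensional case for $\Gamma=D_{2^{k+1}(2r+1)}$ being excluded by hypothesis). The main technical point is to verify that the cross term $(n-1)c_1(\rho)\smile c_1(\chi)$ lies precisely in the mixed Künneth summand $H^2(\BG[\Gamma])\otimes H^2(\BG[\mathsf{C}_m])$ and not in the pure summands, which follows from the multiplicativity of cup products under the Künneth cross product.
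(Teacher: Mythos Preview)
Your proof is correct and follows essentially the same strategy as the paper: decompose each irreducible as $\rho^\Gamma\otimes\rho^{\mathsf{C}_m}$, apply the tensor-product Chern class formulas from the appendix, separate the $\Gamma$- and $\mathsf{C}_m$-components via K\"unneth, invoke Lemma~\ref{lem:no.divisors} to cancel the factor $n$, and reduce to the classification already established for $\Gamma$. The only noteworthy difference is in how the second Chern class is handled. The paper does not decompose $H^4$ via K\"unneth; once it has $c_1(\rho_1^\Gamma)=c_1(\rho_2^\Gamma)$ and $c_1(\rho_1^{\mathsf{C}_m})=c_1(\rho_2^{\mathsf{C}_m})$, it simply subtracts the two instances of the formula~\eqref{eq:c2.tensor} and reads off $c_2(\rho_1^\Gamma)=c_2(\rho_2^\Gamma)$ directly. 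Your route---isolating $c_2(\rho)$ as the pure $H^4(\BG[\Gamma])$-K\"unneth component---is equally valid but requires the extra check (which you correctly flag) that the cross term lands in the mixed summand; the paper's subtraction avoids this.
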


\begin{proof}
Let $\rho_1$ and $\rho_2$ be two irreducible representations of $\Gamma\times\mathsf{C}_m$. We have to prove that $\rank\rho_1=\rank\rho_2$, $c_{1}(\rho_1)=c_{1}(\rho_2)$
and $c_{2}(\rho_1)=c_{2}(\rho_2)$ implies that $\rho_1\cong\rho_2$. The other implication is trivial. The irreducible representations $\rho_i$, with $i=1,2$, can be written as
\begin{equation}\label{eq:tensor}
\rho_i=\rho_i^{\Gamma}\otimes\rho_i^{\mathsf{C}_m},
\end{equation}
where $\rho_i^{\Gamma}$ and $\rho_i^{\mathsf{C}_m}$ are, respectively, irreducible representation of $\Gamma$ and $\mathsf{C}_m$. Using this decomposition we get (see Appendix~\ref{app:tensor})
\begin{gather}
c_1(\rho_i)=c_1(\rho_i^{\Gamma}\otimes\rho_i^{\mathsf{C}_m})=c_1(\rho_i^{\Gamma})+\rank(\rho_i^{\Gamma})c_1(\rho_i^{\mathsf{C}_m})\label{eq:c1.tensor}\\
\scriptstyle c_2(\rho_i)=c_2(\rho_i^{\Gamma}\otimes\rho_i^{\mathsf{C}_m})=c_2(\rho_i^{\Gamma})+(\dim(\rho_i^{\Gamma})-1)c_1(\rho_i^{\Gamma})\cdot c_1(\rho_i^{\mathsf{C}_m})+\binom{\rank(\rho_i^{\Gamma})}{2}c_1(\rho_i^{\mathsf{C}_m})^2.\label{eq:c2.tensor}
\end{gather}
By hypothesis $\rank\rho_1=\rank\rho_2=n$ and since
\begin{align*}
	\rank\rho_1= \rank(\rho^{\Gamma}_1 \otimes \rho^{\mathsf{C}_m}_1) = \rank(\rho^{\Gamma}_1) \cdot \rank(\rho^{\mathsf{C}_m}_1 ) = \rank(\rho^{\Gamma}_1 ) \\
	\rank\rho_2= \rank(\rho^{\Gamma}_2 \otimes \rho^{\mathsf{C}_m}_2) = \rank(\rho^{\Gamma}_2) \cdot \rank(\rho^{\mathsf{C}_m}_2 ) = \rank(\rho^{\Gamma}_2 )
\end{align*}
we have that 
\begin{equation}\label{eq:ranks}
\rank(\rho_1^{\Gamma})=\rank(\rho_2^{\Gamma})=n.
\end{equation}
Also by hypothesis $c_1(\rho_1)=c_1(\rho_2)$, so by \eqref{eq:c1.tensor}
\begin{equation*}
c_1(\rho_1^{\Gamma})+nc_1(\rho_1^{\mathsf{C}_m})=
c_1(\rho_2^{\Gamma})+nc_1(\rho_2^{\mathsf{C}_m}),
\end{equation*}
which is equivalent to
\begin{equation}\label{eq:separated}
0=\bigl(c_1(\rho_1^{\Gamma})-c_1(\rho_2^{\Gamma})\bigr)+n\bigl(\rho_1^{\mathsf{C}_m})-c_1(\rho_2^{\mathsf{C}_m})\bigr)\in H^2(\Gamma\times\mathsf{C}_m;\Z).
\end{equation}
Every group $G$ which acts freely on $\mathbb{S}^3$ has periodic Tate cohomology $\hat{H}^*(G;\Z)$ of period $4$ \cite[Corollary~0.3-(a)]{Davis-Milgram:SSSFP},
that is $\hat{H}^{k}(G;\Z)\cong \hat{H}^{k+4}(G;\Z)$, in particular
\begin{equation}\label{eq:per.coho}
H_1(G;\Z)=\hat{H}^{-2}(G;\Z)\cong \hat{H}^{2}(G;\Z)=H^{2}(G;\Z).
\end{equation}
Hence $\Gamma$, $\mathsf{C}_m$ and $\Gamma\times\mathsf{C}_m$ have periodic Tate cohomology of period $4$ and they satisfy \eqref{eq:per.coho}.
By Küneth theorem we have
\begin{equation*}\scriptstyle
0\to \bigoplus\limits_{i+j=1}H_i(\Gamma;\Z)\otimes H_j(\mathsf{C}_m;\Z)\to H_1(\Gamma\times\mathsf{C}_m;\Z)\to\bigoplus\limits_{i+j=1}\mathrm{Tor}^{\Z}_1\bigl(H_i(\Gamma;\Z),H_j(\mathsf{C}_m;\Z)\bigr)
\to0.
\end{equation*}
Therefore by \eqref{eq:per.coho}
\begin{equation}\label{eq:dir.prod}
H^2(\Gamma\times\mathsf{C}_m;\Z)\cong H_1(\Gamma\times\mathsf{C}_m;\Z)\cong H_1(\Gamma;\Z)\oplus H_1(\mathsf{C}_m;\Z).
\end{equation}
Moreover, by \eqref{eq:separated} and \eqref{eq:per.coho} we have that
\begin{equation}\label{eq:separated2}
\begin{aligned}
0=\bigl(c_1(\rho_1^{\Gamma})-c_1(\rho_2^{\Gamma})\bigr)&\in H^2(\Gamma;\Z)\cong H_1(\Gamma;\Z)\\
0=n\bigl(c_1(\rho_1^{\mathsf{C}_m})-c_1(\rho_2^{\mathsf{C}_m})\bigr)&\in H^2(\mathsf{C}_m;\Z)\cong H_1(\mathsf{C}_m;\Z).
\end{aligned}
\end{equation}
By \eqref{eq:separated} and \eqref{eq:separated2} and Lemma~\ref{lem:no.divisors} we get
\begin{align}
c_1(\rho_1^{\Gamma})&=c_1(\rho_2^{\Gamma})\label{eq:c1Gamma=}\\
c_1(\rho_1^{\mathsf{C}_m})&=c_1(\rho_2^{\mathsf{C}_m}).\label{eq:c1Cl=}
\end{align}
The representations $\rho_i^{\mathsf{C}_m}$ have dimension one. By Remark~\ref{rem:irr.rep.Cn.class} and Remark~\ref{rem:Chern.triple},
one dimensional representations of $\mathsf{C}_m$ are classified by their first Chern class $c_i(\rho_i^{\mathsf{C}_m})$ in $\BG[\mathsf{C}_m]$.
Hence, by \eqref{eq:c1Cl=} we have that 
\begin{equation}\label{eq:part.Cm}
\rho_1^{\mathsf{C}_m}\cong\rho_2^{\mathsf{C}_m}.
\end{equation}

By hypothesis $c_2(\rho_1)=c_2(\rho_2)$, then \eqref{eq:c2.tensor} and \eqref{eq:ranks} imply
\begin{multline}\label{eq:c2=}
c_2(\rho_1^{\Gamma})+(n-1)c_1(\rho_1^{\Gamma})\cdot c_1(\rho_1^{\mathsf{C}_m})+\binom{n}{2}c_1(\rho_1^{\mathsf{C}_m})^2\\
=c_2(\rho_2^{\Gamma})+(n-1)c_1(\rho_2^{\Gamma})\cdot c_1(\rho_2^{\mathsf{C}_m})+\binom{n}{2}c_1(\rho_2^{\mathsf{C}_m})^2.
\end{multline}
By \eqref{eq:ranks}, \eqref{eq:c1Gamma=}, \eqref{eq:c1Cl=} and \eqref{eq:c2.tensor} we get
\begin{equation}\label{eq:c2Gamma=}
c_2(\rho_1^{\Gamma})=c_2(\rho_2^{\Gamma}).
\end{equation}
The irreducible representations of $\Gamma$ are classified by their vectors of CCS-numbers or equivalently, by Remark~\ref{rem:Chern.triple},
they are classified by their triple of Chern classes $[\dim\rho^{\Gamma}; c_{1}(\rho^{\Gamma}); c_{2}(\rho^{\Gamma})]$ in $\BG[G]$.
Hence, by \eqref{eq:ranks}, \eqref{eq:c1Gamma=} and \eqref{eq:c2Gamma=} we have that
\begin{equation}\label{eq:part.Gamma}
\rho_1^{\Gamma}\cong\rho_2^{\Gamma}.
\end{equation}
By \eqref{eq:tensor}, \eqref{eq:part.Cm} and \eqref{eq:part.Gamma} we have that $\rho_1\cong\rho_2$, this finishes the proof.
\end{proof}

Putting together the results given in Table~\ref{tb:fcn}, Corollary~\ref{cor:vccs.lens}, Remark~\ref{rem:1dr.vcssn} Proposition~\ref{prop:2dir.vcssn.cls}, Remark~\ref{rem:1dm.D2kq}, 
Proposition~\ref{prop:Pp83k.class} and Proposition~\ref{prop:G.product.class} we obtain the following theorem.

\begin{theorem}\label{th:main.class}
Let $\Gamma$ be a group listed in Theorem~\ref{thm:fsgfi} and let $\varsigma\colon\Gamma\to U(2)$ be a faithful fixed-point free unitary representation.
Consider the spherical $3$-manifold $M=\mathbb{S}^3/\varsigma(\Gamma)$. The irreducible representations of $\pi_1(M)\cong\Gamma$ are classified by their vector of CCS-numbers.
In the case of $\Gamma=D_{2^k(2r+1)}\times\mathsf{C}_l$ with $\gcd(2^k(2r+1),l)=1$ we only consider one-dimensional representations.
\end{theorem}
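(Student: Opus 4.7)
The plan is to prove Theorem~\ref{th:main.class} by splitting into cases along the classification of $\Gamma\cong\pi_1(M)$ given in Theorem~\ref{thm:fsgfi}, and for each family invoke the classification result established earlier in Section~\ref{sec:Class.FVB}. No new argument is needed; the content has been packaged into the previous statements, and the proof is essentially a bookkeeping step that verifies every family is covered.

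Concretely, I would first dispatch the families in which $\Gamma$ is \emph{not} written as a direct product with an extra cyclic factor. If $\Gamma\cong\mathsf{C}_n$ then $M$ is a lens space $L(n;q)$, and Corollary~\ref{cor:vccs.lens} shows the one-dimensional irreducible representations are already distinguished by their first CCS-number. If $\Gamma\cong\mathsf{BD}_{2q}$ I combine Remark~\ref{rem:1dr.vcssn} (one-dimensional representations) with Proposition~\ref{prop:2dir.vcssn.cls} (two-dimensional representations). If $\Gamma$ is $\mathsf{BT}$, $\mathsf{BO}$, or $\mathsf{BI}$, direct inspection of Table~\ref{tb:fcn} shows that no two irreducible representations share the same triple $(\dim,\widehat{c}_{\rho,1},\widehat{c}_{\rho,2}([M]))$, so the vector of CCS-numbers classifies them. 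For $\Gamma\cong D_{2^{k+1}(2r+1)}$, Remark~\ref{rem:1dm.D2kq} takes care of the one-dimensional representations, and two-dimensional representations are excluded by the hypothesis of the theorem (pending Conjecture~\ref{conj:Dk2r.2d}). For $\Gamma\cong P'_{8\cdot 3^k}$, Proposition~\ref{prop:Pp83k.class} gives the classification directly.

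For the remaining family, where $\Gamma=\Gamma_0\times\mathsf{C}_m$ with $\Gamma_0$ a group from items \eqref{it:ffi.Q}--\eqref{it:ffi.Pk} of Theorem~\ref{thm:fsgfi} and $\gcd(|\Gamma_0|,m)=1$, I would apply Proposition~\ref{prop:G.product.class}: by Remark~\ref{rem:Chern.triple} the vector of CCS-numbers is equivalent to the Chern-class triple $[\dim\rho;c_1(\rho);c_2(\rho)]$ in $\BG[\Gamma]$, which is exactly what that proposition classifies via the tensor-product decomposition $\rho\cong\rho^{\Gamma_0}\otimes\rho^{\mathsf{C}_m}$ and the Künneth splitting of $H^2(\Gamma_0\times\mathsf{C}_m;\Z)$. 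In the case $\Gamma_0=D_{2^{k+1}(2r+1)}$ I would again restrict to one-dimensional representations, matching the hypothesis of the theorem.

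There is no genuine obstacle: the work has been done, so the task reduces to checking that the enumeration of cases is exhaustive and that the hypotheses of each cited result are met (in particular, that the faithful fixed-point free representation $\varsigma$ ensures $M$ is a rational homology $3$-sphere so that Theorem~\ref{thm:rhs} and Proposition~\ref{prop:1drnt.xi0}, which feed the cited classifications, apply). The only place where the argument is intentionally incomplete is the excluded case $\Gamma=D_{2^{k+1}(2r+1)}\times\mathsf{C}_l$ in ranks $\ge 2$, which is precisely where Conjecture~\ref{conj:Dk2r.2d} would be needed.
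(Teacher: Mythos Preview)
Your proposal is correct and matches the paper's approach exactly: the paper states the theorem as an immediate consequence of Table~\ref{tb:fcn}, Corollary~\ref{cor:vccs.lens}, Remark~\ref{rem:1dr.vcssn}, Proposition~\ref{prop:2dir.vcssn.cls}, Remark~\ref{rem:1dm.D2kq}, Proposition~\ref{prop:Pp83k.class} and Proposition~\ref{prop:G.product.class}, and you have simply made the case split explicit.
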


\begin{remark}
By Remark~\ref{rem:CCSt.vCCSn} and correspondence \eqref{eq:Flat.Rep} Theorem~\ref{th:main.class} is equivalent to Theorem~\ref{Thm2}
\end{remark}

\section{Singularity Theory}\label{sec:sing}

In this section we recall basics on reflexive modules, full sheaves and topological invariants of normal surface singularities. We assume basic familiarity with these objects,
see \cite{Bruns-Herzog:CMR,Ishii:Sing,Nemethi:5lect,Nemethi:NSS} for more details. From now on, we denote by $(X,x)$ the germ of a two dimensional quotient singularity, i.e., there exists a small finite subgroup $\Gamma \subset \GLn[2]{\C}$ such that $(X,x) \cong (\C^2 / \Gamma,0)$ as germs. 
We denote by $\Ss{X}$ the sheaf of holomorphic functions on the complex space $X$.

\subsection{Minimal resolution and link}

Let $(X,x)$ be the germ of a two-dimensional quotient singularity. Let
\begin{equation*}
	\pi\colon\tilde{X}\to X
\end{equation*}
be the \textit{minimal resolution of $(X,x)$}, i.e., a proper holomorphic map from a smooth surface $\Rs$ to a given representative of $(X,x)$ such that $\pi$ restricted to the complement of $E=\pi^{-1}(x)$ is a biholomorphism and $E$ does not contain a projective line $\mathbb{P}^1$ with self intersection equal to -1.

\begin{definition}
The \textit{geometric genus} $p_g$ of $X$ is the dimension of the $\C$-vector space $H^1(\Rs,\Ss{\Rs})$. We say that $(X,x)$ is a \emph{rational singularity} if $p_g=0$.
\end{definition}

\begin{remark}\label{rmk:quotient.isrational}
Every quotient surface singularity is rational \cite[Corollary 7.4.10]{Ishii:Sing}.
\end{remark}

Let $\mathrm{Pic}(\Rs)=H^1(\Rs,\Ss{\Rs}^*)$ denote the Picard group of $\Rs$, the group of isomorphism classes of analytic line bundles on $\Rs$. 
Since $(X,x)$ is rational, the homomorphism 
\begin{equation}\label{eq:Pic.Chern}
c_1\colon\mathrm{Pic}(\Rs)\to H^2(\Rs;\Z)
\end{equation}
given by the first Chern class is an isomorphism \cite[Proposition~7.1.10]{Nemethi:NSS},
that is, all analytic line bundles are characterized topologically by the first Chern class.

\begin{remark}\label{rmk:link.qhs}
Let $L$ be the link of $(X,x)$ and recall from Subsection~\ref{ssec:qss} that $L$ is a spherical manifold. From Subsection~\ref{ssec:s3m.ccsn.xi} $L$ is a rational homology sphere, 
this also follows from the fact that $(X,x)$ is a rational singularity, see Laufer's Rationality Criterion \cite[Theorem~7.1.2-(iv)]{Nemethi:NSS} and \cite[3.4.2]{Nemethi:NSS}.
\end{remark}

\subsection{Flat and reflexive modules}

Let $(X,x)$ be the germ of a two-dimensional quotient singularity and $X$ some representative of such a germ.
An  $\Ss{X}$-module $M$ is \textit{indecomposable} if it is not the direct sum of two non trivial modules.
Let $\Homs_{\Ss{X}}(\cdot,\cdot)$ be the sheaf theoretical  Hom functor.
The \textit{dual} of an $\Ss{X}$-module $M$ is denoted by $M^{\smvee}:=\Homs_{\Ss{X}}(M,\Ss{X})$. An $\Ss{X}$-module $M$ is called \textit{reflexive} if the natural homomorphism from $M$ to $M^{\smvee \smvee}$ is an isomorphism (see \cite[Definition~5.1.12]{Ishii:Sing}).

Let $M$ be a coherent $\Ss{X}$-module and $\mathrm{End}_{\C}(M)$ the $\C$-vector space $\C$-linear maps.
Following~\cite{Gustavsen-Ile:RMNSSRLFG} a \textit{connection} on $M$ is an $\Ss{X}$-linear map
\begin{equation*}
	\nabla \colon \mathrm{Der}_{\C}(\Ss{X})\to \mathrm{End}_{\C}(M),
\end{equation*}
which for all $f\in \Ss{X}$, $m\in M$ and $D\in \mathrm{Der}_{\C}(\Ss{X})$ satisfies the \emph{Leibniz rule}
\begin{equation*}
	\nabla(D)(fm)=D(f)m+f \nabla(D)(m).
\end{equation*}
A connection $\nabla$ is called \emph{integrable or flat} if it is a $\C$-Lie algebra homomorphism.
An coherent $\Ss{X}$-module with a flat connection is called a \textit{flat module}.

Consider the following categories:
\begin{align*}
	\mathrm{Ref}_{X} &:= \text{the category of reflexive $\Ss{X}$-modules},\\
	\mathrm{Ref}^{\nabla}_{X} &:= \text{\parbox[t]{.85\textwidth}{the category of pairs $(M,\nabla)$ where $M$ is a reflexive $\Ss{X}$-module and $\nabla$ is an integrable connection,}}\\
	\mathrm{Rep}_{\pi_1(L)} &:= \text{the category of complex finite dimensional representations of $\pi_1(L)$}.
\end{align*}

By~\cite{Gustavsen-Ile:RMNSSRLFG} there is an equivalence of categories 
\begin{equation}\label{eq:Ref=Rep}
	\mathrm{Ref}^{\nabla}_{X} \cong \mathrm{Rep}_{\pi_1(L)}.
\end{equation}
Since $X$ is a quotient singularity, by Esnault~\cite{Esnault:RMQSS} the {\it forgetful functor} 
\begin{equation}\label{eq:RefN=Ref}
	\mathrm{Ref}^{\nabla}_{X} \to \mathrm{Ref}_{X},
\end{equation}
is an equivalence of categories. 

Let $\mathrm{Rep}^1_{\pi_1(L)}$ be the subcategory of $\mathrm{Rep}_{\pi_1(L)}$ of complex one dimensional representations of $\pi_1(L)$.
Let $\rho\in \mathrm{Obj}(\mathrm{Rep}^1_{\pi_1(L)})$, so it is a homomorphism $\rho \colon \pi_1(L) \to \mathrm{G}L(1,\C)=\C^*$ where $\C^*$ is the multiplicative subgroup of $\C$.
Since $\C^*$ is abelian,
\begin{equation}
	\label{eq:property.abelian}
	\Hom(\pi_1(L),\C^*) = \Hom(\pi_1(L)_{\mathrm{ab}},\C^*) = \Hom(H_1(L,\Z),\C^*).
\end{equation}
By the Universal Coefficient Theorem we get
\begin{equation}
	\label{eq:universal.coef.th}
	\Hom(H_1(L,\Z),\C^*) \cong H^1(L,\C^*).
\end{equation}
Hence, by~\eqref{eq:property.abelian} and~\eqref{eq:universal.coef.th}
$\rho \in H^1(L,\C^*)$. Replacing $\C^*$ by $\C/\Z$,  the exact sequence
\begin{equation*}
	0 \to \Z \to \C \to \C/\Z \cong \C^* \to 0,
\end{equation*}
induces  the following cohomological long exact sequence
\begin{equation}\label{eq:long.sequence.coh.Sigma}
	\dots \to H^1(L, \C) \to H^1(L, \C/\Z) \stackrel{\Phi}{\longrightarrow}	H^2(L, \Z) \to H^2(L, \C) \to \dots
\end{equation}
By Remark~\ref{rmk:link.qhs}, the link $L$ is a rational homology sphere and by~\eqref{eq:long.sequence.coh.Sigma}, the homomorphism $\Phi$ is an isomorphism
\begin{equation}\label{eq:Psi=c1}
\Phi=c_1\colon\mathrm{Rep}^1_{\pi_1(L)}\cong\Hom(\pi_1(L),\C^*)\cong H^1(L, \C/\Z)\to H^2(L, \Z)
\end{equation}
which corresponds to take the first Chern class $c_1(\rho)$ of a one-dimensional representation $\rho$ of $\pi_1(L)$.

\subsection{Full sheaves} Let $(X,x)$ be the germ of a two-dimensional quotient singularity and $ \pi \colon \Rs \to X $ be the minimal resolution. 
In \cite{Esnault:RMQSS} Esnault gave the definition of full sheaves.
\begin{definition}\label{def:full.sheaf}
An $\Ss{\Rs}$-module $\mathcal{M}$ is called \textit{full} if there is a reflexive $\Ss{X}$-module $M$ such that $\mathcal{M} \cong \left(\pi^* M\right)/\mathrm{tor}$.
We call $\mathcal{M}$ the \textit{full sheaf associated to $M$}. We denote by $\mathrm{Sh}_{\tilde{X}}^{\mathrm{Full}}$ the set of full sheaves.
A full sheaf is \textit{indecomposable} if it is not the direct sum of two non trivial full sheaves.
\end{definition}

\begin{remark}
By \cite[Lemma and definition~2.2]{Esnault:RMQSS} a full sheaf $\mathcal{M}$ is a locally free sheaf, that is, $\Rs$ can be covered by open sets $U$ for which
$\mathcal{M}|_U$ is isomorphic to the direc sum of $r$ copies of $\Ss{\Rs}|_U$ for some $r\in\mathbb{N}$ called the \textit{rank} of $\mathcal{M}$ denoted by $\rank\mathcal{M}$.
Taking the associated geometric vector bundle of $\mathcal{M}$ \cite[Exercise~II.5.18]{Hartshorne:AlgGeom} we can compute its first Chen class $c_1(\mathcal{M})\in H^{2}(\tilde{X};\Z)$.
Alternatively, if $\mathcal{M}$ has rank $r$, taking $r$ generic sections, one can define the first Chern class as the Poincaré dual of the homology class
defined by the discriminant of the $r$ sections \cite[p.~66]{Esnault:RMQSS}.
\end{remark}

\begin{remark}\label{rm:full.mcm.1to1}
Let $M$ be a reflexive $\Ss{X}$-module. Denote by $\mathcal{M} = \left(\pi^* M\right)/\mathrm{tor}$ the associated full sheaf. 
Then, by \cite[Lemma and definition~2.2]{Esnault:RMQSS} we have that  $\pi_* \mathcal{M}=M$. 
Thus, there is a one-to-one correspondence 
\begin{equation}\label{eq:full.reflex}
\mathrm{Sh}_{\tilde{X}}^{\mathrm{Full}}\longleftrightarrow  \mathrm{Obj}(\mathrm{Ref}_{X})
\end{equation}
between full sheaves and reflexive $\Ss{X}$-modules. 
\end{remark}

Using the following particular case of \cite[Theorem~6.19]{arciniegaetal:CCSC}, we can compute the invariant $\tilde{\xi}_\rho(D)$ on the minimal resolution.
\begin{theorem}\label{th:C2Res}
Let $(X,x)$ be the germ of a quotient surface singularity. Let $\pi \colon \Rs \to X$ be the minimal resolution. Let $(M,\nabla)$ be a flat reflexive $\Ss{X}$-module. Denote by $\rho \colon \pi_1(\Sigma) \to GL(n,\C)$ the representation associated to $(M,\nabla)$  by equivalence \eqref{eq:Ref=Rep}. Denote by $\mathcal{M}$ the corresponding full sheaf on $\Rs$. Then,
\begin{equation}\label{eq:xi.integral}
\tilde{\xi}_{\rho}(D)= \int_{\Rs} \left( \mathrm{ch} \, \mathcal{M} -n \right) \mathcal{T}(\Rs),
\end{equation}
where $\mathrm{ch}$ is the Chern character and $\mathcal{T}$ the Todd class. Moreover, if $\rho$ is topologically trivial, then by \eqref{eq:CCS2=xi} the second CCS-number $\widehat{c}_{\rho,2}([L])$ is given by \eqref{eq:xi.integral}.
\end{theorem}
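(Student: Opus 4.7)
The plan is to deduce Theorem~\ref{th:C2Res} as a direct specialization of \cite[Theorem~6.19]{arciniegaetal:CCSC}, whose general statement computes $\tilde\xi_\rho(D)$ as an integral of characteristic forms on any compact complex surface filling the rational homology sphere $L$, provided that the flat bundle $V_\rho\to L$ admits a holomorphic, locally free extension to that surface. The whole task reduces to checking that the pair $(\tilde X,\mathcal M)$ furnishes exactly such a filling of $(L,V_\rho)$.

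First I would fix the geometric setting. Choose a Stein representative of $(X,x)$ so that its link $L=\partial X\cong \mathbb{S}^3/\Gamma$ (Subsection~\ref{ssec:qss}), and let $\tilde X$ denote the minimal resolution of this representative. Then $\tilde X$ is a compact complex surface whose boundary is identified with $L$ via $\pi$, the exceptional set $E=\pi^{-1}(x)$ is a strong deformation retract of $\tilde X$, and $\pi$ restricts to a biholomorphism $\tilde X\setminus E\to X\setminus\{x\}$. In particular $\tilde X$ is a valid complex-geometric filling of $L$ in the sense of \cite[Theorem~6.19]{arciniegaetal:CCSC}.

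Next I would identify $\mathcal M$ as the required holomorphic extension of $V_\rho$. Under the equivalence \eqref{eq:Ref=Rep}, the flat reflexive $\Ss X$-module $(M,\nabla)$ restricted to $X\setminus\{x\}$ is a flat holomorphic bundle whose monodromy is $\rho$; pulling it back through $\pi$ yields a flat holomorphic bundle on $\tilde X\setminus E$ that agrees with $\mathcal M|_{\tilde X\setminus E}$ by construction. Esnault's \cite[Lemma and definition~2.2]{Esnault:RMQSS} then guarantees that $\mathcal M=(\pi^*M)/\mathrm{tor}$ is locally free across $E$, so $(\tilde X,\mathcal M)$ is a holomorphic filling of $(L,V_\rho)$. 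Applying \cite[Theorem~6.19]{arciniegaetal:CCSC} then gives
\begin{equation*}
\tilde\xi_\rho(D)=\int_{\tilde X}\bigl(\mathrm{ch}\,\mathcal M-\mathrm{ch}\,\Ss{\tilde X}^{n}\bigr)\,\mathcal T(\tilde X)=\int_{\tilde X}(\mathrm{ch}\,\mathcal M-n)\,\mathcal T(\tilde X),
\end{equation*}
after using $\mathrm{ch}\,\Ss{\tilde X}^{n}=n$. The subtraction of $n$ is what makes the integrand compactly supported near $E$, so the integral converges and is independent of the chosen Stein representative. The final assertion of the theorem is then immediate: if $\rho$ is topologically trivial, \eqref{eq:CCS2=xi} gives $\widehat c_{\rho,2}([L])=\tilde\xi_\rho(D)$, so the same integral computes the second CCS-number.

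The main obstacle is checking the boundary compatibility required by the cited theorem, namely that the Chern--Weil forms built from a smooth Hermitian connection on $\mathcal M$ over $\tilde X$ differ from those of the flat connection on $V_\rho$ near $L$ only by exact terms, so that the integral is boundary-invariant and equals $\tilde\xi_\rho(D)$ modulo $\Z$. This is the essential hypothesis of \cite[Theorem~6.19]{arciniegaetal:CCSC} and holds here automatically, since $\mathcal M$ is by construction the holomorphic extension across $E$ of the flat bundle determined by $(M,\nabla)$; thus the work in the proof is entirely bookkeeping to match the present singularity-theoretic data $(M,\nabla,\mathcal M)$ with the input of the general index-theoretic formula.
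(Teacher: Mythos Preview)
Your proposal is correct and follows exactly the paper's approach: the paper does not give an independent proof of this statement but simply presents it as ``a particular case of \cite[Theorem~6.19]{arciniegaetal:CCSC}''. You have in fact supplied more detail than the paper does, by spelling out why $(\tilde X,\mathcal M)$ satisfies the hypotheses of that general theorem (the locally-free extension across $E$ via Esnault's lemma, and the identification of $\mathcal M|_{\tilde X\setminus E}$ with the flat bundle of monodromy $\rho$).
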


Let $(X,x)$ be the germ of a quotient surface singularity and $\pi \colon \Rs \to X$ the minimal resolution. Let $M$ be a reflexive $\Ss{X}$-module. 
Denote by $\mathcal{M}$ the corresponding full sheaf on $\Rs$.
By the correspondence between full sheaves and reflexive $\Ss{X}$-modules and since $M$ admits a unique flat connection $\nabla$  by the 
equivalence of categories \eqref{eq:RefN=Ref}, we can see the invariant $\tilde{\xi}_\rho(D)$ as an invariant of $\mathcal{M}$ which we denote by
\begin{equation*}
	\tilde{\xi}(\mathcal{M}):=\tilde{\xi}_{\rho}(D),
\end{equation*}
where $\rho \colon \pi_1(\Sigma) \to GL(n,\C)$ is the representation associated to $(M,\nabla)$ by equivalence \eqref{eq:Ref=Rep}.

\subsection{Classification of reflexive modules on quotient surface singulairites} \label{sec.Classification}
In this section, we classify (almost) all the reflexive modules over quotient surface singularities. By Remark~\ref{rm:full.mcm.1to1}, it is enough to classify all the full sheaves. Moreover, we can restrict to the case of indecomposable full sheaves.

\begin{remark}\label{rmk:wunram.dificil}
If the singularity is a rational double point, then any indecomposable full sheaf is determined by its rank and its first Chern class. This was done by Artin and Verdier in \cite{Artin-Verdier:RMORDP}.
\end{remark}
	
Later, Esnault~\cite{Esnault:RMQSS} asked if the rank and the first Chern class are enough to classify the indecomposable full shaves for general quotient surface singularities. 
In \cite[pp.~597]{Wunram:RMOQSS} Wunram answered this question negatively. Over the quotient singularity $(X,x)=(\C^2/\mathbb{I}_7,0)$, where $\mathbb{I}_7\cong\mathsf{BI}\times\mathsf{C}_7$ (see Theorem~\ref{thm:fssu2}), Wunram constructed two different full shaves $\mathcal{M}_1$ and  $\mathcal{M}_2$ with the same rank and the same first Chern class:
\begin{equation}\label{eq:wunram.example}
\begin{split}
r=\rank \mathcal{M}_1 &= \rank \mathcal{M}_2,\\
c_1(\mathcal{M}_1) &= c_1(\mathcal{M}_2).
\end{split}
\end{equation}
Denote by $\mathcal{L}_1=\det \mathcal{M}_1$ and $\mathcal{L}_2=\det \mathcal{M}_2$ the determinant bundles of $\mathcal{M}_1$ and $\mathcal{M}_1$, respectively. Since, $c_1(\mathcal{M}_1)=c_1(\mathcal{L}_1)$ and $c_1(\mathcal{M}_2)=c_1(\mathcal{L}_2)$. Then, by~\eqref{eq:wunram.example} we get $c_1(\mathcal{L}_1)=c_1(\mathcal{L}_2)$. By Remark~\ref{rmk:quotient.isrational}, $(X,x)$ is a rational singularity. Since $\mathcal{L}_1$ and $\mathcal{L}_2$ are line bundles with the same first Chern class over a rational singularity, by the isomorphism \eqref{eq:Pic.Chern} they have to be the same, so $\mathcal{L}_1 \cong \mathcal{L}_2$. 
By the proof of~\cite[Proposition~2.8]{Esnault:RMQSS}, the full sheaves $\mathcal{M}_1$  and $\mathcal{M}_2$ appear as an extension of its determinant bundle and a trivial bundle, i.e.,
\begin{equation}\label{eq:wunram.example.2}
0 \to \Ss{\Rs}^{r-1} \to \mathcal{M}_i \to \mathcal{L}_i \to 0,\quad i=1,2.
\end{equation}
Note that~\eqref{eq:wunram.example.2} it is an exact sequence of vector bundles. In the differential category, every short exact sequence of differential vector bundles split. Therefore, as $C^{\infty}$-vector bundles, we get
\begin{equation*}
\mathcal{M}_1 \cong \Ss{\Rs}^{r-1} \oplus \mathcal{L}_1 \quad \text{and} \quad \mathcal{M}_2 \cong \Ss{\Rs}^{r-1} \oplus \mathcal{L}_2.
\end{equation*}
Since $\mathcal{L}_1 \cong \mathcal{L}_2$, we get that as $C^{\infty}$-vector bundles $\mathcal{M}_1$ is diffeomorphic to $\mathcal{M}_2$. Neverthelss, by Wunram~\cite[pp.~597]{Wunram:RMOQSS} we know that as holomorphic vector bundles $\mathcal{M}_1 \not \cong \mathcal{M}_2$. Thus, the classification problem of full shaves requires an invariant that determines the holomorphic structure.

In Section~\ref{sec:Class.FVB}, we classify all the indecomposable flat vector bundles over 3-dimensional spherical manifolds, except for one case.
We will use this result to prove the main theorem of this section but we need to do some minors modifications. 
The main reason is that $(X,x)$ is a singular space, and even more, the regular part $U$ of $X$ is a complex 2-dimensional space. 
It is true that $U$ and the link $L$ are homotopically equivalent but we need to guarantee that we do not loose the classification passing from the complex structure to the topological structure and vice-versa. Our main theorem is the following.

\begin{theorem}\label{th:main.class2}
Let $(X,x)$ be a quotient surface singularity and $\pi \colon \Rs \to X$ the minimal resolution. Let $\mathcal{M}_1$ and $\mathcal{M}_2$ be two full sheaves. Then $\mathcal{M}_1 \cong \mathcal{M}_2$ if and only if $\mathrm{rank}(\mathcal{M}_1)=\mathrm{rank}(\mathcal{M}_2)$, $c_1(\mathcal{M}_1)\cong c_1(\mathcal{M}_2)$ and $\tilde{\xi}(\mathcal{M}_1)=\tilde{\xi}(\mathcal{M}_2)$.
If $(X,x)=(\C^2/\mathbb{D}_{n,q},0)$ with $\gcd(m,2)=2$ we only consider $\mathcal{M}_1$ and $\mathcal{M}_2$ such that $\mathrm{rank}(\mathcal{M}_1)=\mathrm{rank}(\mathcal{M}_2)=1$.
\end{theorem}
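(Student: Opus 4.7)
The plan is to translate the classification problem for full sheaves into one for representations of $\pi_1(L)\cong\Gamma$, via the chain of equivalences of categories
\[
\mathrm{Sh}_{\tilde{X}}^{\mathrm{Full}}\longleftrightarrow\mathrm{Ref}_{X}\cong\mathrm{Ref}^{\nabla}_{X}\cong\mathrm{Rep}_{\pi_1(L)}
\]
coming from Remark~\ref{rm:full.mcm.1to1} together with the equivalences \eqref{eq:Ref=Rep} and \eqref{eq:RefN=Ref}, and then to apply Theorem~\ref{th:main.class}. Each full sheaf $\mathcal{M}_i$ is thereby assigned a representation $\rho_i\colon\pi_1(L)\to\GLn{n_i}$, and since $\pi_1(L)$ is finite, Maschke's theorem identifies indecomposable representations with irreducible ones, placing us in the scope of Theorem~\ref{th:main.class}. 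The forward implication is immediate, since rank, first Chern class and $\tilde\xi$ are well-defined on isomorphism classes; so I shall only sketch the converse.

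Next I would verify that each of the three hypotheses translates into an equality of CCS-numbers of $\rho_1$ and $\rho_2$. The rank equality is tautological, $\rank\mathcal{M}_i=\dim\rho_i$. For the first Chern class, I would restrict $\mathcal{M}_i$ to the regular part $U=\tilde{X}\setminus E$, which is homotopy equivalent to the link $L$ and on which $\mathcal{M}_i|_U$ coincides with the flat vector bundle $V_{\rho_i}$; naturality then shows that $c_1(\mathcal{M}_i)$ maps to $c_{\rho_i,1}\in H^2(L;\Z)$ under the restriction $H^2(\tilde{X};\Z)\to H^2(L;\Z)$, and by the isomorphism \eqref{eq:ccs-c1} (which applies since $L$ is a rational homology sphere by Remark~\ref{rmk:link.qhs}) the hypothesis yields $\widehat{c}_{\rho_1,1}=\widehat{c}_{\rho_2,1}$. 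For the second CCS class, observe that by construction $\tilde\xi(\mathcal{M}_i)=\tilde\xi_{\rho_i}(D)$. Combining the equality of first CCS-classes with Theorem~\ref{thm:rhs}-\eqref{it:c1.r.det} yields $\widehat{c}_{\det\rho_1,1}=\widehat{c}_{\det\rho_2,1}$; since one-dimensional representations of $\Ab(\Gamma)$ are classified by their first CCS-number, this forces $\det\rho_1\cong\det\rho_2$ and hence $\tilde\xi_{\det\rho_1}(D)=\tilde\xi_{\det\rho_2}(D)$. Theorem~\ref{thm:rhs}-\eqref{it:xi.non.triv} then gives $\widehat{c}_{\rho_1,2}([L])=\widehat{c}_{\rho_2,2}([L])$. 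With the entire vector of CCS-numbers of $\rho_1$ and $\rho_2$ in agreement, Theorem~\ref{th:main.class} produces $\rho_1\cong\rho_2$, and the categorical equivalences reverse to give $\mathcal{M}_1\cong\mathcal{M}_2$. The exceptional case $(X,x)=(\C^2/\mathbb{D}_{n,q},0)$ with $\gcd(m,2)=2$ is restricted to rank one, where $\rho_i$ is one-dimensional and already determined by its first CCS-number alone, so the same argument closes.

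The main obstacle I expect is the precise identification of $c_1(\mathcal{M}_i)\in H^2(\tilde{X};\Z)$ with $c_{\rho_i,1}\in H^2(L;\Z)$: one must argue that the natural restriction map sends the former to the latter, and that the hypothesis formulated on $\tilde{X}$ is strong enough to force the agreement needed on $L$. This rests on the naturality of Chern classes and on the topological rigidity of $\mathrm{Pic}(\tilde{X})$ encoded in \eqref{eq:Pic.Chern} (which holds because $(X,x)$ is rational by Remark~\ref{rmk:quotient.isrational}). Once this identification is secured, everything else is formal manipulation of the invariants developed in Sections~\ref{sec:CCS} and \ref{sec:Class.FVB}.
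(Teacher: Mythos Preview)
Your proposal is correct and follows essentially the same route as the paper: translate to representations via the chain of equivalences, identify $c_1(\mathcal{M}_i)$ with $c_{\rho_i,1}$ by restriction to the link, deduce $\det\rho_1\cong\det\rho_2$ from equality of first CCS-classes, use Theorem~\ref{thm:rhs}-\eqref{it:xi.non.triv} to obtain equality of second CCS-numbers, and invoke Theorem~\ref{th:main.class}. The paper packages the Chern-class identification into a single commutative diagram involving $\iota^*\colon H^2(\tilde{X};\Z)\to H^2(L;\Z)$ and the isomorphism $\Phi$ of \eqref{eq:Psi=c1}, but this is exactly the naturality-plus-restriction argument you outline; your anticipated obstacle is not really one, since only well-definedness of $\iota^*$ (not injectivity) is needed, and \eqref{eq:Pic.Chern} plays no role in the proof itself.
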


\begin{proof}
First note that since the minimal resolution $\pi$ is a biholomorphism restricted to the complement of $E=\pi^{-1}(x)$, we can identify the link $L\subset X$ with its
inverse image $\pi^{-1}(L)$. Then, taking the inclusion $\iota\colon L\cong\pi^{-1}(L)\to \tilde{X}$ we obtain an induced homomorphism in cohomology
\begin{equation}\label{eq:iota}
\iota^*\colon H^2(\tilde{X}, \Z)\to H^2(L, \Z).
\end{equation}
If $\mathcal{M}_1 \cong \mathcal{M}_2$, the proof is trivial. Suppose that $\mathrm{rank}(\mathcal{M}_1)=\mathrm{rank}(\mathcal{M}_2)$, $c_1(\mathcal{M}_1)\cong c_1(\mathcal{M}_2)$ and $\tilde{\xi}(\mathcal{M}_1)=\tilde{\xi}(\mathcal{M}_2)$. Denote by $(M_{\rho_1},\nabla_{\rho_1})$ and $(M_{\rho_2},\nabla_{\rho_2})$ respectively, the associated flat reflexive $\Ss{X}$-modules associated to $\mathcal{M}_1$ and $\mathcal{M}_2$, and let $\rho_1 \colon \pi_1(L) \to \GLn{n_1}$ and  $\rho_2 \colon \pi_1(L) \to \GLn{n_2}$ be, respectively, the associated representations of the fundamental group of the link $L$, given by equivalence \eqref{eq:Ref=Rep}.
The geometric vector bundle associated to $\mathcal{M}_i$ has rank $\mathrm{rank}(\mathcal{M}_i)$ and its res\-triction to the link $L\cong\pi^{-1}(L)$ gives the vector bundle $V_{\rho_i}$ (see Subsection~\ref{ssec:CCScl}) corresponding to the representation $\rho_i$ by \eqref{eq:Flat.Rep}. Hence $\mathrm{rank}(\mathcal{M}_i)=n_i=\dim\rho_i$.
Since $\mathrm{rank}(\mathcal{M}_1)=\mathrm{rank}(\mathcal{M}_2)$ we have that $\dim(\rho_1)=\dim(\rho_2)$.

By Theorem~\ref{th:C2Res} the invariant $\tilde{\xi}(\mathcal{M}_i)$, $i=1,2$, can be computed on the link. Therefore, we just need to prove that
if  $c_1(\mathcal{M}_1)\cong c_1(\mathcal{M}_2)$, then $c_1(\rho_1)\cong c_1(\rho_2)$.
This follows by the commutative diagram

\begin{equation}
\xymatrix{
\mathrm{Sh}_{\tilde{X}}^{\mathrm{Full}}\ar@{<->}[r]^(0.42){\text{\eqref{eq:full.reflex}}}\ar[rd]_{c_1} &
\mathrm{Obj}(\mathrm{Ref}_{X})\ar@{<->}[r]^{\text{\eqref{eq:RefN=Ref}}} & \mathrm{Obj}(\mathrm{Ref}^{\nabla}_{X})\ar@{<->}[r]^(0.5){\text{\eqref{eq:Ref=Rep}}} &
\mathrm{Rep}_{\pi_1(L)}\ar[ld]^{c_1}\ar[dd]^{\det}\\
 & H^2(\tilde{X}, \Z)\ar[r]^{\iota^*}_{\text{\eqref{eq:iota}}}& H^2(L, \Z) & \\
 & & &\mathrm{Rep}^1_{\pi_1(L)}\cong H^1(L;\C/\Z)\ar[lu]^{\cong}_{\Phi=c_1\text{ \eqref{eq:Psi=c1}}}
}
\end{equation}
We have that $\mathcal{M}_1,\mathcal{M}_2\in \mathrm{Sh}_{\tilde{X}}^{\mathrm{Full}}$. 
By hypo\-the\-sis $c_1(\mathcal{M}_1)\cong c_1(\mathcal{M}_2)$ and by the commutativity of the upper part of the diagram we have that $c_1(\rho_1)=c_1(\rho_2)$.
Moreover, by the commutativity of the triangle on the right side of the diagram and by the isomorphism $\Phi$ given in \eqref{eq:Psi=c1} we have that $\det\rho_1=\det\rho_2$.
Furthermore, by \eqref{eq:ccs-c1} we have the equality of the first Cheeger-Chern-Simons classes $\widehat{c}_{\rho_1,1}=\widehat{c}_{\rho_2,1}$ and their
corresponding first CCS-numbers $\widehat{c}_{\rho_1,1}(\nu_i)=\widehat{c}_{\rho_2,1}(\nu_i)$ with $\nu_i$ generator(s) of $H_1(L;\Z)$.

On the other hand, also from $\det\rho_1=\det\rho_2$ we have that $\tilde{\xi}_{\det\rho_1}(D)=\tilde{\xi}_{\det\rho_2}(D)$,
hence by Theorem~\ref{thm:rhs}-\eqref{it:xi.non.triv} we have that 
\begin{equation*}
\widehat{c}_{\rho_1,2}([L])=\tilde\xi_{\rho_1}(D)-\tilde\xi_{\det(\rho_1)}(D)=	\tilde\xi_{\rho_2}(D)-\tilde\xi_{\det(\rho_2)}(D)=\widehat{c}_{\rho_1,2}([L]).
\end{equation*}

Therefore, we have proved that
\begin{equation*}
\dim(\rho_1)=\dim(\rho_2),\quad \widehat{c}_{\rho_1,1}(\nu_i)=\widehat{c}_{\rho_2,1}(\nu_i),\quad \text{and}\quad \widehat{c}_{\rho_1,2}([L])=\widehat{c}_{\rho_1,2}([L]).
\end{equation*}
Then, by Theorem~\ref{th:main.class} we have $\rho_1=\rho_2$, which implies that $\mathcal{M}_1\cong\mathcal{M}_2 $.

The case $\mathbb{D}_{n,q}$ with $\gcd(m,2)=2$, by Theorem~\ref{thm:D.T}-\eqref{it:T.A} corresponds to the case $\Gamma=D_{2^k(2r+1)}\times\mathsf{C}_l$ with $\gcd(2^k(2r+1),l)=1$
in Theorem~\ref{th:main.class}.
\end{proof} 

Theorem~\ref{th:main.class2} is equivalent to Theorem~\ref{Thm3} in the Introduction.

\appendix

\section{Chern classes of the tensor product of representations}\label{app:tensor}

In the proof of Proposition~\ref{prop:G.product.class} we need formulas for the first and second Chern classes of the tensor product of two representations. In this appendix
we show how to get these formulae.

By \cite[Appendix~(8)]{Atiyah-PMIHES} if $x_1,\dots,x_n,y_1,\dots,y_n$ are two sets of indeterminates  with elementary symmetric functions $a_i,b_i$ respectively, we can define polynomials $Q_k$ by the formula
\begin{equation}\label{eq:Qk}
\prod_{\substack{1\leq i\leq n\\1\leq j\leq m}}\bigl(1+t(x_i+y_j)\bigr)=\sum_k Q_k(a_1,\dots,a_n,b_1,\dots,b_m)t^k
\end{equation}
where $t$ is an indeterminate.

Let $\rho$ and $\sigma$ be two representations of ranks $n$ and $m$ respectively.
The Chern classes of the tensor product $\rho\otimes\sigma$ are then given by
\begin{equation}\label{eq:cc.tenprod.formula}
c_k(\rho\otimes\sigma)=Q_k(c_1(\rho),\dots,c_n(\rho),c_1(\sigma),\dots,c_m(\sigma.))
\end{equation}
Note that if $\dim\rho=\dim\sigma=1$, formula \eqref{eq:cc.tenprod.formula} gives $ c_1(\rho\otimes\sigma)=c_1(\rho)+c_1(\sigma)$.

\subsection{Our case}

We are interested in the case when $n=1,2,3,4,5,6$, $m=1$ and $k=1,2$. But we do it for any $n$.

We need to compute the product \eqref{eq:Qk} with $m=1$
\begin{equation}
(1+tx_1+ty_1)(1+tx_2+ty_1)\cdots(1+tx_n+ty_1).
\end{equation}
Let us analyze how are the terms of the different degrees of $t$.
The terms of the product are obtained choosing one of the three terms in each factor and multipliying them.
\begin{description}
\item[Degree $0$] The only term of degree zero is $1$ and it is obtained taking \textbf{all} the $1$'s in each factor.
\item[Degree $1$] The terms of degree one are obtained choosing $(n-1)$ $1$'s in $(n-1)$ factors and one term different from $1$ in the remaining factor.
If we choose the term different from $1$ a term with an $x_i$ we obtain
\begin{equation}\label{eq:deg1.1}
(x_1+\dots+x_n)t.
\end{equation}
If we choose the term different from $1$ the term $ty_1$ we get
\begin{equation}\label{eq:deg1.2}
ny_1t.
\end{equation}
From \eqref{eq:deg1.1} and \eqref{eq:deg1.2} the sum of all the term of degree one is
\begin{equation}\label{eq:terms1}
\bigl((x_1+\dots+x_n)+ny_1\bigr)t.
\end{equation}
\item[Degree $2$] The terms of degree two are obtained choosing $(n-2)$ $1$'s in $(n-2)$ factors and one term different from $1$ in the two remaining factors.
If we choose the terms different from $1$ terms $x_i$ and $x_j$, with $1\leq i,j\leq n$ and $i\neq j$, we obtain
\begin{equation}\label{eq:deg2.1}
(x_1x_2+\dots+x_1x_n+x_2x_3+\dots+x_{n-1}x_n)t^2.
\end{equation}
If we choose the terms different from $1$ the term $x_i$ and $y_1$, we have that $1\leq i\leq n$ and $y_1$ can be in any of the other $(n-1)$ factors, we obtain
\begin{equation}\label{eq:deg2.2}
(n-1)(x_1y_1+\dots+x_ny_1)t^2.
\end{equation}
If we choose the terms different from $1$ two terms $y_1$, we can choose them of $\binom{n}{2}$ ways, so we get
\begin{equation}\label{eq:deg2.3}
\binom{n}{2}y_1^2t^2.
\end{equation}
From \eqref{eq:deg2.1}, \eqref{eq:deg2.2} and \eqref{eq:deg2.3} the sum of all the term of degree two is
\begin{equation}\label{eq:terms2}
\left((x_1x_2+\dots+x_1x_n+x_2x_3+\dots+x_{n-1}x_n)+(n-1)(x_1+\dots+x_n)y_1+\binom{n}{2}y_1^2\right)t^2.
\end{equation}
The elementary symmetric functions on the $x_i$ are
\begin{align}
a_1&=x_1+\dots+x_n\label{eq:a1}\\
a_2&=x_1x_2+\dots+x_1x_n+x_2x_3+\dots+x_{n-1}x_n\label{eq:a2}
\end{align}
and $b_1=y_1$.

We have that $c_1(\rho)=a_1$, $c_2(\rho)=a_2$ and $c_1(\sigma)=b_1$.
Hence from the terms of degree one in \eqref{eq:terms1}, \eqref{eq:a1} and \eqref{eq:cc.tenprod.formula} we get
\begin{equation}
c_1(\rho\otimes\sigma)=c_1(\rho)+nc_1(\sigma).
\end{equation}
From the terms of degree two in \eqref{eq:terms2}, \eqref{eq:a1}, \eqref{eq:a2} and \eqref{eq:cc.tenprod.formula} we get
\begin{equation}
c_2(\rho\otimes\sigma)=c_2(\rho)+(n-1)c_1(\rho)\cdot c_1(\sigma)+\binom{n}{2}c_1(\sigma)^2.
\end{equation}
\end{description}

\bibliography{CSS-references}
\bibliographystyle{plain}
\end{document}